\newcommand{\Hilb}{\mathrm{Hilb}}
\newcommand{\C}{\mathbb{C}}
\newcommand{\Z}{\mathbb{Z}}
\newcommand{\R}{\mathbb{R}}
\renewcommand{\P}{\mathbb{P}}
\newcommand{\CT}{\mathcal{T}}
\newcommand{\CJ}{\mathcal{J}}
\newcommand{\CL}{\mathcal{L}}
\newcommand{\CO}{\mathcal{O}}
\newcommand{\CI}{\mathcal{I}}
\newcommand{\sgn}{\mathrm{sgn}}
\newcommand{\spann}{\mathrm{span}}
\newcommand{\supp}{\mathrm{supp}}
\newcommand{\Spec}{\mathrm{Spec}}
\newcommand{\Pic}{\mathrm{Pic}}
\newcommand{\Row}{\mathrm{Row}}
\newcommand{\Col}{\mathrm{Col}}
\newcommand{\Bl}{\mathrm{Bl}}
\numberwithin{equation}{section}
\newtheorem{theorem}{Theorem}[section]
\newtheorem{proposition}[theorem]{Proposition}
\newtheorem{corollary}[theorem]{Corollary}
\newtheorem{lemma}[theorem]{Lemma}
\theoremstyle{definition}
\newtheorem{remark}[theorem]{Remark}
\newtheorem{definition}[theorem]{Definition}
\newtheorem{example}[theorem]{Example}
\newcommand{\sq}{\square}
\newcommand{\bsq}{\blacksquare}
\title{Newton-Okounkov bodies for nested Hilbert schemes}
\author{Ian Cavey}
\address{Department of Mathematics, University of Illinois Urbana-Champaign, 1409 W Green St,
Urbana, IL 61801}
\email{cavey@illinois.edu}
\author{Eugene Gorsky}
\address{Department of Mathematics, University of California Davis, One Shields Avenue, Davis, CA 95616}
\email{egorskiy@ucdavis.edu}
\author{Alexei Oblomkov}
\address{Department of Mathematics and Statistics,
University of Massachusetts,
Lederle Graduate Research Tower,
710 North Pleasant Street,
Amherst, MA 01003}
\email{oblomkov@math.umass.edu}
\author{Joshua P. Turner}
\address{Department of Mathematics,
University of British Columbia, 
1984 Mathematics Road,
Vancouver, BC Canada V6T 1Z2}
\email{jpturner@math.ubc.ca}
\begin{document}

\begin{abstract}
We study sections of line bundles on the nested Hilbert scheme of points on the affine plane. We describe the spaces of sections in terms of certain ideals introduced by Haiman, and find explicit bases for them by analyzing the trailing terms in some monomial order. As a consequence, we compute the Newton-Okounkov bodies for nested Hilbert schemes.
\end{abstract}

\maketitle

\section{Introduction}

In this paper, we study line bundles on the nested Hilbert schemes of points on the affine plane, and their spaces of sections. 

Recall that the Hilbert scheme of $n$ points $\Hilb^n(\C^2)$ is the moduli space of ideals $\CI\subset \C[x,y]$ such that $\dim \C[x,y]/\CI=n$ (or, equivalently, $0$-dimensional subschemes $Z\subset \C^2$ of length $n$). It carries the natural tautological bundle $\CT=\C[x,y]/\CI$, the natural line bundle $\CO(1)=\wedge^n\CT$ and its powers $\CO(m)$. 

The space of global sections of $\CO(m)$ was studied by Haiman \cite{Haiman,Haimanqt} who proved, among other things, that for $m\ge 0$ it agrees with $\sgn(m)$-component of the ideal $J^{m}\subset \C[x_1,y_1,\ldots,x_n,y_n]$ where
\begin{equation}
\label{eq: def J intro}
J = \bigcap_{i<j} (x_i-x_j,y_i-y_j).
\end{equation}
Here $\sgn(m)=\sgn^{\otimes m}$ denotes the one-dimensional representation of $S_n$ which is trivial for $m$ even and sign for $m$ odd.
In \cite{C} the first named author described bases for these spaces of sections by characterizing the set of all trailing term exponents of these polynomials. The asymptotic version of this result as $m\to\infty$ is a Newton-Okounkov body computation for $\Hilb^n(\C^2)$, also described in \cite{C}.

The main object of this paper is the {\em nested Hilbert scheme} $\Hilb^{n,n+1}(\C^2)$ defined as the moduli space of pairs of ideals
$$
\mathcal{J}\subset \mathcal{I}\subset \C[x,y]
$$
such that $\dim \C[x,y]/\CI=n$ and $\dim \C[x,y]/\CJ=n+1.$ Equivalently, it is the space of pairs of subschemes $(Z,Z')\in \Hilb^n(\C^2)\times \Hilb^{n+1}(\C^2)$ such that $Z\subset Z'$. It is known \cite{Cheah,Tikh} that $\Hilb^{n,n+1}\C^2$ is smooth of dimension $2n+2$. We have natural projections
\begin{equation}
\label{eq: nested projections intro}
\begin{tikzcd}
 & \Hilb^{n,n+1}(\C^2) \arrow[swap]{dl}{\pi_n} \arrow{d}{\pi} \arrow{dr}{\pi_{n+1}}& \\
\Hilb^n(\C^2) & \C^2 & \Hilb^{n+1}(\C^2) 
\end{tikzcd}
\end{equation}
which send a pair $(\CJ\subset \CI)$ respectively to $\CI,\supp(\CI/\CJ)$ and $\CJ$.
Define line bundles $$\CO(m,k) = \pi_n^* \CO(m)\otimes \pi_{n+1}^* \CO(k)$$ on $\Hilb^{n,n+1}(\C^2)$. Here is our first main result.

\begin{theorem}\label{thm:polynomialsections}
    For $m,k\ge 0$ the global sections of $\CO(m,k)$ are identified with the $\sgn(m+k)$-component of $J^{m+k}\cap I^k\subseteq \C[x_1,y_1,\dots,x_n,y_n,x,y]$, where the ideal $J$ is given by \eqref{eq: def J intro} and 
    \[  I = \bigcap_{i=1}^n (x_i-x,y_i-y). \]
\end{theorem}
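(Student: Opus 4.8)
The plan is to reduce the computation to Haiman's description of sections on $\Hilb^n(\C^2)$ by exploiting the blowup structure of the nested scheme. First I would record the effect of twisting on the tautological sequence. Over $\Hilb^{n,n+1}(\C^2)$ the inclusion $\CJ\subset\CI$ gives a short exact sequence
$$0\to \CI/\CJ\to \pi_{n+1}^*\CT\to \pi_n^*\CT\to 0,$$
in which $L:=\CI/\CJ$ is a line bundle, the quotient having length one supported at the point recorded by $\pi$. Taking top exterior powers yields $\pi_{n+1}^*\CO(1)=\pi_n^*\CO(1)\otimes L$, and hence
$$\CO(m,k)=\pi_n^*\CO(m)\otimes\pi_{n+1}^*\CO(k)=\pi_n^*\CO(m+k)\otimes L^{\otimes k}.$$
This trades the two-parameter family for a pullback from $\Hilb^n(\C^2)$ twisted by powers of the single line bundle $L$.

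Next I would identify the geometry carrying $L$. The map $(\pi_n,\pi)\colon \Hilb^{n,n+1}(\C^2)\to \Hilb^n(\C^2)\times\C^2$ presents the nested scheme as the blowup $\Bl_{\mathcal{Z}}(\Hilb^n(\C^2)\times\C^2)$ of the universal subscheme $\mathcal{Z}$, with ideal sheaf $\CI_{\mathcal{Z}}$: the fiber over $([\CI],p)$ parametrizes the rank-one quotients of $\CI_{\mathcal Z}\otimes\C_p$, i.e.\ the choices of $\CJ$, and one checks $L\cong\CO(-E)$ for the exceptional divisor $E$. Writing $\rho$ for the blowup and $p\colon\Hilb^n(\C^2)\times\C^2\to\Hilb^n(\C^2)$ for the projection, the projection formula together with $\rho_*\CO(-kE)=\CI_{\mathcal Z}^{\,k}$ and the vanishing of $R^{>0}\rho_*$ gives
$$H^0\big(\Hilb^{n,n+1}(\C^2),\CO(m,k)\big)=H^0\big(\Hilb^n(\C^2),\ \CO(m+k)\otimes p_*\CI_{\mathcal Z}^{\,k}\big).$$

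The core step is to translate the right-hand side into the polynomial ring through the isospectral Hilbert scheme $X_n\to\Hilb^n(\C^2)$. Haiman's results identify $H^0(\Hilb^n(\C^2),\CO(\ell))$ with the $\sgn(\ell)$-component of $J^\ell$; the additional factor $p_*\CI_{\mathcal Z}^{\,k}$ corresponds, after pulling back to $X_n\times\C^2$, to the power $I^k$, since the universal subscheme pulls back to the locus where the free point $(x,y)$ meets one of the $n$ marked points $(x_i,y_i)$, cut out precisely by $I=\bigcap_i(x_i-x,y_i-y)$. Because $I$ is $S_n$-invariant, tensoring by $I^k$ preserves the $\sgn(\ell)$-isotypic decomposition, and with $\ell=m+k$ one lands in the $\sgn(m+k)$-component of $J^{m+k}\cap I^k$, as claimed.

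The main obstacle is this last step: one must know that taking global sections yields exactly $J^{m+k}\cap I^k$ rather than a proper subspace — that there is no higher-cohomology obstruction and no gap between $\rho_*\CO(-kE)$ and the honest ideal power. This is a Haiman-type input, resting on the normality and Cohen--Macaulayness of the isospectral geometry (equivalently, of the isospectral nested scheme $X_n\times_{\Hilb^n(\C^2)}\Hilb^{n,n+1}(\C^2)$) and an accompanying vanishing theorem for $\CO(\ell)\otimes\CI_{\mathcal Z}^{\,k}$. I would either invoke Haiman's established normality, Cohen--Macaulayness, and vanishing for $X_n$ and $X_{n+1}$ and deduce the nested statement from the blowup presentation, or, if those do not apply directly, prove the required vanishing on the blowup itself. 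The precise verification that $L\cong\CO(-E)$ and $\rho_*\CO(-kE)=\CI_{\mathcal Z}^{\,k}$, including the integral-closure behaviour of the universal ideal, is the routine-but-delicate bookkeeping supporting this reduction.
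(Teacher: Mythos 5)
Your setup is correct as far as it goes: the exact sequence $0\to\CI/\CJ\to\pi_{n+1}^*\CT\to\pi_n^*\CT\to 0$ does give $\CO(m,k)\cong\pi_n^*\CO(m+k)\otimes L^{\otimes k}$ (your $L$ is the paper's $\CL$), and the presentation of $\Hilb^{n,n+1}(\C^2)$ as the blowup of $\Hilb^n(\C^2)\times\C^2$ along the universal subscheme $\mathcal{Z}$ is a known theorem (cf.\ Ellingsrud--Str\o mme \cite{ES}). But there is a genuine gap exactly where you flag one, and that gap is the entire content of the theorem. First, since the blowup is smooth, hence normal, what pushing forward actually gives is $\rho_*\CO(-kE)=\overline{\CI_{\mathcal{Z}}^{\,k}}$, the integral closure; your claimed equality $\rho_*\CO(-kE)=\CI_{\mathcal{Z}}^{\,k}$ is equivalent to the universal ideal being a normal ideal (all powers integrally closed), which is neither standard nor anywhere in Haiman's work. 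Second, the identification of $H^0\bigl(\Hilb^n(\C^2)\times\C^2,\ p^*\CO(m+k)\otimes\rho_*\CO(-kE)\bigr)$ with the $\sgn(m+k)$-part of the honest intersection $J^{m+k}\cap I^k$ cannot simply be ``invoked'': Haiman's vanishing theorems concern $\CO(\ell)$ twisted by tensor powers of the tautological bundle, not by powers of the ideal sheaf of the universal subscheme, and passing from a sheaf of ideals on $\Hilb^n(\C^2)\times\C^2$ to the ideal $I^k\subset\C[x_1,y_1,\dots,x_n,y_n,x,y]$ is itself nontrivial, because the comparison is only immediate over the locus of distinct points, whose complement has codimension one, so no Hartogs-type extension applies. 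A proof whose crucial step reads ``either invoke \dots or prove the required vanishing'' is a strategy outline, not a proof.

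For contrast, the paper avoids both issues by a different geometric comparison: not the blowup of $\Hilb^n(\C^2)\times\C^2$ along $\mathcal{Z}$, but the Hilbert scheme of the blown-up \emph{surface}. After splitting off the $\C^2$ factor (Lemma \ref{lem: factorization}), it shows that $\Hilb^{n,n+1}_0(\C^2)$ and $\Hilb^n(\Bl_0\C^2)$ are isomorphic outside codimension-two loci (Proposition \ref{prop: codim 2}), so their section spaces agree by normality (Corollary \ref{cor:birationalequiv}); sections on $\Hilb^n(\Bl_0\C^2)$ are computed by elementary toric localization (Proposition \ref{prop: sections blowup}), giving the space $A^{m+k}_{\geq k}$; and the explicit substitution $\phi(x_i)=x_i-x$, $\phi(y_i)=y_i-y$ carries $A^{m+k}_{\geq k}[x,y]$ isomorphically onto $A^{m+k}[x,y]\cap I^k$ (Lemmas \ref{lem: A[x,y] span}--\ref{lem:changeofvariables}), the needed equality $I^k=\bigcap_i(x_i-x,y_i-y)^k$ being transported by $\phi$ from the obvious monomial statement. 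The only deep input is Haiman's theorem for $\Hilb^n(\C^2)$ itself, which your route also assumes. To complete your approach you would need, concretely, (i) normality of the ideal $\CI_{\mathcal{Z}}$, and (ii) a proof that sections of $p^*\CO(m+k)\otimes\CI_{\mathcal{Z}}^{\,k}$ correspond exactly to $A^{m+k}[x,y]\cap I^k$; both are substantial projects rather than citations.
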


To prove this result, we first observe that 
\begin{equation}
\label{eq: nested centered}
\Hilb^{n,n+1}(\C^2)\simeq \C^2\times \Hilb^{n,n+1}_0(\C^2)
\end{equation}
where $\Hilb^{n,n+1}_0(\C^2)=\pi^{-1}(0)$. We describe the explicit isomorphism in Lemma \ref{lem: factorization}, which allows us to focus on line bundles on $\Hilb^{n,n+1}_0(\C^2)$.

Next, we consider  the Hilbert scheme of points of $\Bl_0\C^2$, the blowup of $\C^2$ at the origin. Let $E$ be the exceptional divisor in the blowup. Our second main result identifies the sections of line bundles on $\Hilb^{n,n+1}_0(\C^2)$ and on $\Hilb^n (\mathrm{Bl}_0 \C^2)$.

\begin{theorem}\label{thm:birationalequiv}
    For all $m,k\in \Z$, there is an isomorphism
    \[ H^0\left(\Hilb^{n,n+1}_0(\C^2),\CO(m,k)\right) = H^0\left(\Hilb^n (\mathrm{Bl}_0 \C^2),\CO(m+k)\otimes \CO(kE)_n\right)  \]
    where $\CO(E)_n$ denotes the line bundle obtained by pulling back the symmetrization of $\CO(E)$ from the symmetric power of $\mathrm{Bl}_0 \C^2$.
\end{theorem}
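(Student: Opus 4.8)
The plan is to deduce the stated equality of section spaces from a birational equivalence between $\Hilb^n(\Bl_0\C^2)$ and $\Hilb^{n,n+1}_0(\C^2)$ which is an isomorphism in codimension one, together with an explicit comparison of the two line bundles as classes in the Picard group. Both spaces are smooth of dimension $2n$, so once the bundles are matched, restriction to a common dense open set and extension by normality (Hartogs) will upgrade the identification to an isomorphism on global sections, valid for all $m,k\in\Z$ with no positivity hypotheses.

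First I would construct the birational map and check the bundle correspondence on the open locus. Let $V\subseteq\Hilb^n(\Bl_0\C^2)$ be the locus of subschemes disjoint from $E$, and let $U\subseteq\Hilb^{n,n+1}_0(\C^2)$ be the locus of pairs $(\CJ\subset\CI)$ with $0\notin\supp\CI$. On $U$ the ideal $\CJ=\CI\cap(x,y)$ is determined by $\CI$, so $U\cong\Hilb^n(\C^2\setminus\{0\})$, and the isomorphism $\C^2\setminus\{0\}\cong\Bl_0\C^2\setminus E$ identifies $U\cong V$. Over this common open set I would verify the bundle matching directly: writing the fiber of $\wedge^{n+1}\CT$ at $Z'=Z\sqcup\{0\}$ as $(\wedge^n\CT_Z)\otimes(\C[x,y]/(x,y))$ and noting that the origin factor is canonically trivial gives $\pi_{n+1}^*\CO(k)|_U\cong\pi_n^*\CO(k)|_U$, hence $\CO(m,k)|_U\cong\CO(m+k)|_U$. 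Since $\CO(kE)_n$ is trivial on $V$, the two bundles appearing in the theorem agree on $U\cong V$.

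It remains to analyze the boundary divisors and pin down the twist. The complement $\Hilb^n(\Bl_0\C^2)\setminus V$ is the divisor $E_n$ along which some point meets $E$ (the Hilbert--Chow pullback computing $\CO(E)_n$), while $\Hilb^{n,n+1}_0(\C^2)\setminus U$ is the divisor $B$ whose generic point has $\CI$ with a reduced point at the origin and $\CJ$ recording a tangent direction there. A dimension count shows both are irreducible of codimension one and correspond under the birational map, and the remaining indeterminacy (collisions, or higher-length behavior at the origin) is codimension at least two, so the map is an isomorphism in codimension one. Consequently $\CO(m,k)$ and $\CO(m+k)$, which agree away from these divisors, differ by $\CO(cE_n)$ for a single integer $c$, and the whole theorem reduces to showing $c=k$.

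I expect this last step to be the main obstacle. One must compute the order of vanishing along $B$ of the determinant section cutting out $\pi_{n+1}^*\CO(1)$, showing that each degeneration of $Z'$ to a length-two scheme pointing in a direction on $E$ contributes exactly one to the order along $E_n$, while $\pi_n^*\CO(1)$ contributes nothing because $\CI$ carries only a reduced point at the origin along $B$. This is a local computation near the exceptional divisor, most cleanly carried out by restricting to a transverse test curve, or by reducing to the case $n=1$ where the two spaces are already isomorphic by the analysis of length-two schemes at the origin. With $c=k$ in hand, normality of both smooth varieties together with the isomorphism in codimension one yields the desired identification of $H^0\left(\Hilb^{n,n+1}_0(\C^2),\CO(m,k)\right)$ with $H^0\left(\Hilb^n(\Bl_0\C^2),\CO(m+k)\otimes\CO(kE)_n\right)$.
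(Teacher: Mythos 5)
Your overall strategy is the same as the paper's: produce a birational map between $\Hilb^{n,n+1}_0(\C^2)$ and $\Hilb^n(\Bl_0\C^2)$ that restricts to an isomorphism between open sets whose complements have codimension at least two, match the line bundles under the induced identification of Picard groups, and conclude by extending sections over the codimension-two loci using smoothness/normality. This is exactly the content of Proposition \ref{prop: codim 2} and Corollary \ref{cor:birationalequiv}, and your identification of the generic point of your divisor $B$ (a reduced point of $\CI$ at the origin, with $\CJ$ recording a tangent direction) with a point of $E_n$ is the correct geometric mechanism.

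Two of your steps are handled differently (and more robustly) in the paper, and both are places where your sketch stops short. First, your inference ``a dimension count shows both are irreducible of codimension one and correspond under the birational map, \dots\ so the map is an isomorphism in codimension one'' is not automatic: for non-proper varieties a birational map can be undefined along a divisor (e.g.\ the identity viewed as a rational map $\C\dashrightarrow\C\setminus\{0\}$), so one must actually \emph{construct} the extension of $U\cong V$ across the generic points of $B$ and $E_n$. The paper does this explicitly, defining the map as the ``symmetric power'' of the isomorphism $\beta_1:\Hilb^{1,2}_0(\C^2)\cong\Bl_0\C^2$ on the whole locus where $\CI$ has multiplicity at most one at the origin, and only then dimension-counting the complements (multiplicity $\geq 2$ at the origin, resp.\ total multiplicity $\geq 2$ on $E$, each of codimension $\geq 2$). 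Second, for the twist $c=k$ --- the step you yourself flag as the main obstacle and leave as a proposed local computation --- the paper avoids any vanishing-order or test-curve analysis: it uses the known relation $\CO(\partial_n)=\CO(-2)$ together with $\delta=\pi_{n+1}^{-1}(\partial_{n+1})\setminus\pi_n^{-1}(\partial_n)$ to express $\CL=\pi_n^*\CO(-1)\otimes\pi_{n+1}^*\CO(1)$ as a multiple of the class of $\delta$; since the birational map sends $\delta\mapsto E_n$, $\partial\mapsto\partial$ and $\pi_n^*\CO(1)\mapsto\CO(1)$, the correspondence $\CO(m,k)\leftrightarrow\CO(m+k)\otimes\CO(kE)_n$ follows formally. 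Your route (restricting to a transverse curve, or reducing to $n=1$) would also succeed --- it is essentially the same computation localized at the generic point of the divisor --- but it is precisely where the multiplicity conventions (the factor of $2$ relating $\pi_{n+1}^*\partial_{n+1}$, $\pi_n^*\partial_n$ and $\delta$, and the sign convention for $\CO(E)_n$) must be tracked carefully, and your proposal does not carry it out.
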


The proof of Theorem \ref{thm:birationalequiv} essentially follows from a geometric argument: we show that the complements of certain codimension 2 subsets in $\Hilb^{n,n+1}_0(\C^2)$ and on $\Hilb^n (\mathrm{Bl}_0 \C^2)$ are isomorphic, see Proposition \ref{prop: codim 2} and Corollary \ref{cor:birationalequiv}. 

The spaces of sections of line bundles on $\Hilb^n(\Bl_0\C^2)$ also admit a succinct description.

\begin{theorem}
\label{thm: sections blowup intro}
Let $A$ be the space of antisymmetric polynomials in $\C[x_1,\ldots,x_n,y_1,\ldots,y_n]$. Then
$$
H^0\left(\Hilb^n (\mathrm{Bl}_0 \C^2),\CO(m+k)\otimes \CO(kE)_n\right)
$$
is isomorphic to the subspace of polynomials $f$ in $A^{m+k}$ such that each monomial term $x_1^{a_1}y_1^{b_1} \cdots x_n^{a_n}y_n^{b_n}$ of $f$ satisfies $a_i+b_i \geq k$ for all $i$. 
\end{theorem}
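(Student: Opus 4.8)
The plan is to reduce the statement to a single order-of-vanishing condition along one boundary divisor of $\Hilb^n(\Bl_0\C^2)$, after identifying the sections over a dense open set with $A^{m+k}$.

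First I would isolate the open locus $W\subseteq \Hilb^n(\Bl_0\C^2)$ parametrizing subschemes whose support avoids $E$. Pushing forward along $\pi$ identifies $W$ with $\Hilb^n(\C^2\setminus\{0\})$, which is the complement in $\Hilb^n(\C^2)$ of the locus of subschemes meeting the origin; pinning a point to the origin is two conditions, so this complement has codimension $2$. As $\Hilb^n(\C^2)$ is smooth, sections of $\CO(m+k)$ extend across it, whence $H^0(W,\CO(m+k))\cong H^0(\Hilb^n(\C^2),\CO(m+k))\cong A^{m+k}$, the last identification being Haiman's description of the sections as the $\sgn(m+k)$-isotypic part of $J^{m+k}$. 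Since $\CO(kE)_n$ is canonically trivial on $W$, this gives $H^0(W,\CO(m+k)\otimes\CO(kE)_n)\cong A^{m+k}$, and because $W$ is dense the space we want is the subspace of those $P\in A^{m+k}$ whose section extends over $\Hilb^n(\Bl_0\C^2)\setminus W$.

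Next I would analyze that complement. Requiring one of the $n$ points to lie on the divisor $E$ is a single condition, so $\widetilde{E}:=\Hilb^n(\Bl_0\C^2)\setminus W$ has a unique codimension-$1$ component, namely the closure $\widetilde{E}$ of the locus where exactly one reduced point lies on $E$; the loci where two or more points meet $E$, or where points collide on $E$, are codimension $\ge 2$ and, by smoothness, impose no condition. Since Hilbert--Chow is an isomorphism near the generic point of $\widetilde{E}$, the pullback of the symmetrization of $\CO(kE)$ is $\CO_{\Hilb}(-k\widetilde{E})$, so a given $P\in A^{m+k}$ extends to a section of $\CO(m+k)\otimes\CO(kE)_n$ exactly when $\mathrm{ord}_{\widetilde{E}}(P)\ge k$, where the order is taken for $P$ viewed as a section of $\CO(m+k)$. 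To compute it, note that at a generic point of $\widetilde{E}$ the subscheme is reduced, so $\CT$ is the structure sheaf of $n$ distinct points and $\CO(m+k)=(\wedge^n\CT)^{m+k}$ is trivialized by the nonvanishing frame $(e_1\wedge\cdots\wedge e_n)^{m+k}$, with $e_i$ the delta function at the $i$-th point; in this frame $P$ is represented by the honest polynomial $P(x_1,y_1,\dots,x_n,y_n)$. Taking $p_1$ to be the point approaching $E$ and local coordinates $(w,t)$ on $\Bl_0\C^2$ with $x=w,\ y=wt$ and $E=\{w=0\}$, a monomial $x_1^{a_1}y_1^{b_1}\cdots$ becomes $w^{a_1+b_1}t^{b_1}\cdots$, so $\mathrm{ord}_{\widetilde{E}}(P)=\min_{\mu}(a_1^{\mu}+b_1^{\mu})$ over the monomials $\mu$ of $P$ (distinct monomials contribute distinct terms in $t$ and in the remaining variables, so no cancellation occurs). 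Hence $\mathrm{ord}_{\widetilde{E}}(P)\ge k$ is equivalent to $a_1^{\mu}+b_1^{\mu}\ge k$ for every monomial, and since $P$ is $\sgn(m+k)$-isotypic its monomials occur in $S_n$-orbits up to sign, upgrading this to $a_i^{\mu}+b_i^{\mu}\ge k$ for all $i$. This is precisely the asserted subspace.

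The main obstacle I anticipate lies in the third paragraph together with the identification $\CO(kE)_n\cong\CO_{\Hilb}(-k\widetilde{E})$: one must verify that the chosen frame of $\CO(m+k)$ is regular and nonvanishing at the generic point of $\widetilde{E}$, pin down the correct multiplicity and sign so that the twist forces vanishing rather than poles, and confirm that the minimal-order terms of $P$ do not cancel. The reduction in the first paragraph also relies on the identification $H^0(\Hilb^n(\C^2),\CO(m+k))\cong A^{m+k}$.
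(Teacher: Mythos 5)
Your proof is correct, but it takes a different route from the paper. The paper deduces this statement (Proposition \ref{prop: sections blowup}) as a special case of a general theorem for an arbitrary smooth toric surface $X$ and torus-invariant divisor $D$: it covers $\Hilb^n(X)$ in codimension one by the open sets $\Hilb^n(U_j)$ attached to the toric charts $U_j\cong \C^*\times\C$, computes the sections over each chart by localizing Haiman's description (Corollary \ref{cor:SectionsOverOpens}) and multiplying by the local equation of the divisor, and then intersects the resulting support constraints --- for $\Bl_0\C^2$ the rays $(1,0),(0,1),(1,1)$ give exactly $a_i\geq 0$, $b_i\geq 0$, $a_i+b_i\geq k$. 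You instead handle the two ``coordinate'' rays all at once by Hartogs on $\Hilb^n(\C^2)$ (the complement of $\Hilb^n(\C^2\setminus\{0\})$ has codimension $2$, so sections over $W$ are just $A^{m+k}$), and then impose a single valuative condition $\mathrm{ord}_{E_n}(P)\geq k$ along the unique boundary divisor, computed at its generic point in blowup coordinates with the frame $(e_1\wedge\cdots\wedge e_n)^{\otimes(m+k)}$; your no-cancellation argument ($(a_1,b_1)\mapsto(a_1+b_1,b_1)$ is injective) is what makes the order equal to $\min_\mu(a_1^\mu+b_1^\mu)$, and the $\sgn(m+k)$-isotypy upgrades the condition to all indices $i$. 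What each buys: the paper's argument yields the general toric statement with no extra work and avoids any discussion of frames or valuations; yours is shorter and more geometric for this particular case, making explicit that the support condition literally is ``vanishing to order $k$ along $E_n$.'' Two points you rely on deserve explicit care, though neither is a gap: (i) the convention for $\CO(kE)_n$ --- the paper itself wavers between $\det(\CO(kE)^{[n]})$ (Section 2) and the pullback of the symmetrization (the theorem statement), which differ by $\det\CT$; your reading (symmetrization, pulled back as the ideal sheaf $\CO(-kE_n)$, so the twist forces vanishing) is the one under which the stated theorem is true, and you correctly flagged this; (ii) the fact that Haiman's isomorphism $H^0(\Hilb^n(\C^2),\CO(m))\cong A^m$ is, over the reduced locus, given by $P\mapsto P\cdot(e_1\wedge\cdots\wedge e_n)^{\otimes m}$ --- this is standard, and the sign ambiguity in the frame (points cannot be labeled Zariski-globally) is harmless since order of vanishing is computed after an \'etale labeling near the generic point of $E_n$.
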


We deduce Theorem \ref{thm:polynomialsections} from Theorems \ref{thm:birationalequiv} and \ref{thm: sections blowup intro} by means of a certain isomorphism $\phi$ inspired by \eqref{eq: nested centered}, see Section \ref{sec: global sections} for all details.

Next, we describe the explicit bases for these spaces of sections. We use lexicographic order on monomials such that $x<y<x_1<\cdots<x_n<y_1<\cdots<y_n$.  A basis element is characterized by its trailing term with respect to this order. 

\begin{theorem}
\label{thm: basis nested intro}
For any $n\geq 1$ and $m,k\geq 0$, a monomial $x^a y^b x_1^{a_1}y_1^{b_1}\cdots x_n^{a_n}y_n^{b_n}$ is the trailing term of some polynomial $g\in  H^0\left(\Hilb^{n,n+1}(\C^2),\CO(m,k)\right)$ if and only if:
\begin{enumerate}
    \item $a,b\geq 0$
    \item $0\leq a_1\leq a_2 \leq \cdots \leq a_n$,
    \item for any $j=1,\dots,n-1$ for which $a_j = a_{j+1}$, we have $b_{j+1}\geq b_j+m+k$, and
    \item for each $j=1,\dots,n,$ we have $b_j\geq \max\{k-a_j,0\} + \sum_{i=1}^{j-1}\max\{m+k-(a_j-a_i),0\}.$
    \end{enumerate}
\end{theorem}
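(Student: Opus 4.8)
The plan is to leverage Theorem~\ref{thm: sections blowup intro}, which reduces everything to a concrete combinatorial question about antisymmetric polynomials. By Theorems~\ref{thm:birationalequiv} and~\ref{thm: sections blowup intro} (combined with the splitting $\phi$ coming from \eqref{eq: nested centered} used to deduce Theorem~\ref{thm:polynomialsections}), the space $H^0(\Hilb^{n,n+1}(\C^2),\CO(m,k))$ is isomorphic to polynomials in the variables $x,y,x_1,y_1,\dots,x_n,y_n$ whose antisymmetric ``fiber part'' $f(x_1,y_1,\dots,x_n,y_n)$ lies in $A^{m+k}$ with every monomial satisfying $a_i+b_i\ge k$, and whose dependence on $x,y$ is unconstrained (accounting for conditions (1)). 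So the first step is to track exactly how the isomorphism $\phi$ transports trailing terms, verifying that the $x,y$ exponents decouple and that the $x<y<x_1<\cdots$ order on the source corresponds to the analogous order on the target; this should reduce the theorem to characterizing trailing terms of the relevant subspace of $A^{m+k}$.

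Next I would analyze the trailing-term set for this subspace under the lexicographic order. The key structural fact is that $A^{m+k}$ is spanned by products of $m+k$ Vandermonde-type antisymmetrizations, and more usefully, a polynomial in $A^{m+k}$ is antisymmetric and divisible (as an element of $A^{m+k}$) by an appropriate power structure. I expect conditions (2) and (3) to come directly from the antisymmetry together with the power $m+k$: antisymmetry forces distinct columns, so after sorting we may assume $a_1\le\cdots\le a_n$ (condition (2)), and when two $a_j$ coincide, the antisymmetry in the $(m+k)$-th power forces the $y$-exponents to spread out by at least $m+k$ (condition (3)), mirroring the staircase spacing already seen in the $\Hilb^n$ case of \cite{C}. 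The genuinely new constraint is condition (4), which I expect to encode the monomial condition $a_i+b_i\ge k$ from Theorem~\ref{thm: sections blowup intro} \emph{after} passing to trailing terms: since forming a trailing term can only lower exponents, the bound on each $b_j$ must be the minimal amount of $y$-degree forced both by the blowup condition (the $\max\{k-a_j,0\}$ term) and by the antisymmetrization against all smaller columns (the sum of $\max\{m+k-(a_j-a_i),0\}$ terms).

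The proof would then split into necessity and sufficiency. For necessity, I would take any $g$ in the section space, pass to its trailing term, and show each of (1)--(4) must hold; conditions (1)--(3) follow from the degree and antisymmetry bookkeeping, while (4) requires showing that the trailing term's $b_j$ cannot dip below the stated bound without violating either $a_i+b_i\ge k$ or the antisymmetry (an element of $A^{m+k}$ cannot have a trailing term that is ``too balanced'' in the $y$-direction relative to the fixed $x$-pattern). For sufficiency, given a monomial satisfying (1)--(4), I would construct an explicit polynomial realizing it as a trailing term: multiply a fixed antisymmetrizer structure (a product of Vandermonde factors arranged to produce the staircase) by suitable monomials, then verify by a leading-versus-trailing argument that no cancellation or higher trailing term occurs. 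The hard part will be the sufficiency direction of condition (4): constructing, for each admissible exponent vector, an explicit element of the constrained subspace of $A^{m+k}$ whose trailing term is exactly the prescribed monomial, and proving minimality of the $b_j$ bounds is tight by exhibiting a basis indexed precisely by the tuples satisfying (1)--(4). I would likely organize this via a greedy/inductive construction on $j$ that adds one column at a time, choosing the $y$-exponent padding to be exactly the $\max\{\cdots\}$ quantities so that the trailing term is controlled, and then a dimension or linear-independence count (comparing to the known Hilbert series or to the $\Hilb^n$ computation of \cite{C} specialized at $k$) to confirm these trailing terms are all distinct and span.
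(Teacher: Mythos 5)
Your overall architecture is the same as the paper's: use $\phi$ and the blowup identification to reduce to characterizing trailing terms of $A^{m+k}_{\geq k}$ (this is Theorem \ref{thm: sections nested} together with Corollary \ref{cor:trailingterms}), prove necessity of (2)--(4) via the structure theory of $A^m$ from \cite{C}, and prove sufficiency by exhibiting explicit products of determinants. However, the sufficiency direction --- which you yourself flag as ``the hard part'' --- is a genuine gap, because your plan lacks the one new idea that makes it work. An admissible tuple must be realized by an element of $A^{m+k}$ \emph{all} of whose monomials satisfy $a_i+b_i\geq k$; for a product $\Delta_{S^{(1)}}\cdots\Delta_{S^{(m+k)}}$ the only practical way to guarantee this is to impose a partial support condition $a+b\geq k_i$ on each factor, with $k_1+\cdots+k_{m+k}=k$. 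The crux is that the prescribed trailing term must be compatible with such a distribution: every point of $\mathcal{P}(m+k,k)$ is the $k$-lift $\ell_k(S)$ of a point $S\in\mathcal{P}(m+k,0)$, and starting from the decomposition $S=S^{(1)}+\cdots+S^{(m+k)}$ supplied by \cite{C} for the $k=0$ case, one must show $\ell_k(S)=\ell_{k_1}(S^{(1)})+\cdots+\ell_{k_{m+k}}(S^{(m+k)})$ for some choice of the $k_i$. This is exactly Lemma \ref{lem:liftdecomp}, proved by induction on $k$ with a carefully strengthened hypothesis, and nothing in your greedy column-by-column construction produces or circumvents it: padding $y$-exponents one column at a time does not interact with membership in $A^{m+k}$ (a span of $(m+k)$-fold products), and the padding must be distributed \emph{across factors}, not across columns. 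Your fallback --- a dimension count against ``the known Hilbert series'' --- is circular, since the Hilbert series of Corollary \ref{cor: Hilbert series} is itself deduced from this trailing-term theorem, while the localization formula of Corollary \ref{cor: localization} holds only for $m,k>0$ and is not independently matched to the combinatorial count.

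A secondary weakness: you attribute conditions (3) and (4) essentially to ``antisymmetry.'' That is correct for a single determinant, but elements of $A^{m+k}$ are \emph{sums} of products, and a priori cancellation could produce smaller trailing terms. Necessity of (3), and of the summand $\sum_{i<j}\max\{m+k-(a_j-a_i),0\}$ in (4), requires the results of \cite{C} (Proposition 3.5 together with Corollary 3.10, i.e.\ $A^m=\overline{A}^m$), which guarantee that the Newton polytope of any $f\in A^{m+k}$ contains the point obtained from the trailing term by subtracting that sum from $b_j$; since the support inequalities $a_i+b_i\geq k$ are linear, they persist on the Newton polytope, and this is what forces the extra $\max\{k-a_j,0\}$ in (4). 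Your sketch gestures at this but does not engage with the cancellation issue, which is precisely what makes the necessity direction nontrivial.
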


As a consequence of our proof of Theorem \ref{thm: basis nested intro}, we obtain in Corollary \ref{cor:surjectivity} new surjectivity results for a certain ring of global sections of line bundles $\CO(m,k)$ on $\Hilb^{n,n+1}(\C^2).$

All of the above ideals are homogeneous, with $\Z^2$ grading given by $\deg(x_i)=\deg(x)=q$ and $\deg(y_i)=\deg(y)=t$. Geometrically, this grading corresponds to the action of $(\C^{\times})^2$ on $\C^2$ (resp. $\Bl_0(\C^2)$), and on the corresponding Hilbert schemes and spaces of sections.
We can use Theorem \ref{thm: basis nested intro} to compute their Hilbert series.

\begin{corollary}
The Hilbert series of $H^0\left(\Hilb^{n,n+1}(\C^2),\CO(m,k)\right)$ is given by 
$$
H_{m,k}(q,t)=\frac{1}{(1-q)(1-t)}\sum_{\mathcal{P}(m+k,k)}q^{a_1+\ldots+a_n}t^{b_1+\ldots+b_n}
$$
where $\mathcal{P}(m+k,k)$ is the subset of $\Z_{\ge 0}^{2n}$ defined by the inequalities (2)-(4) above.
\end{corollary}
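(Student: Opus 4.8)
The plan is to derive the Hilbert series directly from the explicit basis provided by Theorem \ref{thm: basis nested intro}. The key observation is that the grading $\deg(x_i)=\deg(x)=q$, $\deg(y_i)=\deg(y)=t$ assigns to each basis element the monomial degree of its trailing term. Since Theorem \ref{thm: basis nested intro} produces a basis in bijection with its set of admissible trailing-term exponents, the Hilbert series is simply the generating function
\[
H_{m,k}(q,t)=\sum q^{a+a_1+\cdots+a_n}\,t^{b+b_1+\cdots+b_n},
\]
where the sum ranges over all $(a,b,a_1,b_1,\ldots,a_n,b_n)\in\Z_{\ge 0}^{2n+2}$ satisfying conditions (1)--(4) of that theorem.

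First I would justify that the $\Z^2$-graded dimension in each bidegree equals the number of admissible trailing terms of that bidegree. This requires noting that the trailing-term map is injective on the basis (distinct basis elements have distinct trailing terms) and that the trailing term of a bidegree-$(q^dt^e)$ section is itself a monomial of bidegree $(q^dt^e)$, since all variables are homogeneous and the monomial order is homogeneity-compatible in the sense that only monomials of the same total $q$- and $t$-degree compete. Thus counting basis elements in a given bidegree reduces to counting lattice points satisfying (1)--(4).

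Next I would factor out the contribution of the variables $x,y$. Condition (1) decouples $a,b\ge 0$ from all other inequalities, so the sum over $(a,b)$ contributes the geometric factor $\sum_{a,b\ge 0}q^a t^b=\frac{1}{(1-q)(1-t)}$. The remaining sum over $(a_1,b_1,\ldots,a_n,b_n)$ subject to conditions (2)--(4) is exactly the definition of $\mathcal{P}(m+k,k)$, yielding
\[
H_{m,k}(q,t)=\frac{1}{(1-q)(1-t)}\sum_{\mathcal{P}(m+k,k)}q^{a_1+\cdots+a_n}t^{b_1+\cdots+b_n},
\]
which is the claimed formula. The argument is essentially bookkeeping once Theorem \ref{thm: basis nested intro} is in hand.

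The only genuinely subtle point—and the step I would treat most carefully—is the claim that the trailing-term basis is \emph{graded}, i.e., that the basis can be chosen so each element is homogeneous for the $\Z^2$-grading and its trailing term carries the same bidegree as the element. This follows because the spaces of sections are $\Z^2$-graded subspaces (the ideals $J$, $I$ are homogeneous, so $J^{m+k}\cap I^k$ and its $\sgn(m+k)$-isotypic component are graded), and a trailing-term basis for a graded space with respect to a grading-respecting monomial order automatically decomposes into homogeneous pieces of matching bidegree. I would verify that the lexicographic order with $x<y<x_1<\cdots<y_n$ does not mix bidegrees within the support of a homogeneous polynomial, so that the trailing term of a homogeneous section is a monomial of the same bidegree; granting this, the Hilbert-series identity follows immediately from the bijection between the basis and $\mathcal{P}(m+k,k)\times\Z_{\ge 0}^2$.
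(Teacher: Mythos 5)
Your proof is correct and takes essentially the same route as the paper: the paper deduces this corollary from the trailing-term characterization (Corollary \ref{cor:trailingterms}, equivalently Theorem \ref{thm: basis nested intro}), counting admissible trailing-term exponents bidegree by bidegree and factoring the free $(a,b)\in\Z_{\ge 0}^2$ contribution into $\frac{1}{(1-q)(1-t)}$. The graded one-dimensional-leaves argument you spell out (homogeneity of $J^{m+k}\cap I^k$, injectivity and bidegree-compatibility of the trailing-term map) is exactly the standard bookkeeping the paper leaves implicit.
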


Alternatively, for $k,m>0$ one can use localization formula \eqref{eq: localization} to compute this Hilbert series, see Corollary \ref{cor: localization} for more details. The equality between the two formulas leads to surprising combinatorial identities which we explore in Section \ref{sec: character formulas}.

Finally, we describe the Newton-Okounkov bodies on the nested Hilbert scheme with respect to the corresponding valuation. Let $\Delta(m+k,k)\subseteq \R^{2n}$ denote the polyhedron defined by conditions (2) and (4) in Theorem \ref{thm: basis nested intro}. 

\begin{theorem}
    The Newton-Okounkov body of $\CO(m,k)$ on $\Hilb^{n,n+1}(\C^2)$ with respect to the valuation described in Section \ref{sec: NO bodies} is $\R_{\geq 0}^2\times \Delta(m+k,k)$.
\end{theorem}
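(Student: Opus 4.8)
The plan is to compute the Newton–Okounkov body directly from the semigroup of trailing-term exponents, using Theorem \ref{thm: basis nested intro} to identify that semigroup explicitly, and then to show that the only condition distinguishing the true valuation image from the polyhedron $\Delta(m+k,k)$ --- namely condition (3) --- is asymptotically negligible. Recall that the Newton–Okounkov body of $\CO(m,k)$ for the trailing-term valuation $\nu$ (a rank $2n+2=\dim\Hilb^{n,n+1}(\C^2)$ valuation recording the trailing exponent in the variables $x,y,x_1,y_1,\dots,x_n,y_n$) is the closed convex hull of $\bigcup_{p\ge1}\frac1p\,\nu\!\left(H^0(\CO(pm,pk))\setminus\{0\}\right)$ in $\R^{2n+2}$, since $\CO(m,k)^{\otimes p}=\CO(pm,pk)$. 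By Theorem \ref{thm: basis nested intro} applied with parameters $(pm,pk)$, this image is exactly the set of exponent vectors $(a,b,a_1,b_1,\dots,a_n,b_n)$ with $a,b\ge0$ and with $(a_i,b_i)$ satisfying conditions (2)--(4) for $(pm,pk)$. As condition (1) constrains only $(a,b)$ while (2)--(4) constrain only the $(a_i,b_i)$, the image is a product $\{a,b\ge0\}\times S_p$, where $S_p\subset\Z^{2n}$ denotes the lattice points satisfying (2)--(4). The first factor rescales to a dense subset of $\R_{\ge0}^2$, so it suffices to prove that $\overline{\bigcup_p\frac1pS_p}=\Delta(m+k,k)$, after which a common-$p$ argument (simultaneously approximating the $\R_{\ge0}^2$-coordinates by $(a/p,b/p)$ and the $\Delta$-coordinates by $\frac1p S_p$) recombines the factors into $\R_{\ge0}^2\times\Delta(m+k,k)$.

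First I would record the scaling behaviour of conditions (2) and (4). Condition (2) is invariant under $v\mapsto pv$, and multiplying condition (4) through by $p$ turns the bound for $(m,k)$ into exactly the bound for $(pm,pk)$; hence the real polyhedron cut out by (2),(4) for $(pm,pk)$ equals $p\,\Delta(m+k,k)$. Since every point of $S_p$ satisfies (2),(4), we get $S_p\subseteq p\,\Delta(m+k,k)$, so $\frac1pS_p\subseteq\Delta(m+k,k)$ for all $p$. As $\Delta(m+k,k)$ is closed and convex --- its defining inequality $b_j\ge\max\{k-a_j,0\}+\sum_{i<j}\max\{(m+k)-(a_j-a_i),0\}$ bounds $b_j$ below by a convex piecewise-linear function --- the inclusion $\overline{\bigcup_p\frac1pS_p}\subseteq\Delta(m+k,k)$ follows immediately, and the closed convex hull and the closure of the union coincide.

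The substance of the argument, and the main obstacle, is the reverse inclusion, where I must account for condition (3): a lattice point of $p\,\Delta(m+k,k)$ lies in $S_p$ only if, whenever two consecutive $a$-coordinates coincide, the corresponding $b$-coordinates are separated by at least $p(m+k)$. The key observation is that (3) is triggered only on the walls $a_j=a_{j+1}$, so I would perturb away from them. Given a rational $v\in\Delta(m+k,k)$, consider $v^\epsilon$ obtained by replacing $a_j$ with $a_j+j\epsilon$ (leaving each $b_j$ unchanged) for small rational $\epsilon>0$. This makes the $a$-coordinates strictly increasing, so for any $p$ clearing denominators the integers $pa_j^\epsilon$ are strictly increasing and condition (3) becomes vacuous. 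I would then check that $v^\epsilon$ stays in $\Delta(m+k,k)$: condition (2) holds with strict inequalities for small $\epsilon$, and in condition (4) each difference $a_j-a_i$ with $i<j$ only increases, so every term $\max\{\cdot,0\}$ on the right-hand side can only decrease, leaving the inequality valid with the unchanged $b_j$. Thus $p\,v^\epsilon\in S_p$, so $v^\epsilon\in\frac1pS_p$; since rational points are dense in the rational polyhedron $\Delta(m+k,k)$ and $v^\epsilon\to v$ as $\epsilon\to0$, every point of $\Delta(m+k,k)$ lies in $\overline{\bigcup_p\frac1pS_p}$, completing the reverse inclusion.

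I expect the verification that this perturbation respects condition (4) --- i.e.\ that separating the $a$-coordinates never violates the lower bounds on the $b$-coordinates --- to be the one genuinely nontrivial point. It is precisely the place where the asymptotic vanishing of condition (3) is made rigorous, turning the combinatorial separation $b_{j+1}\ge b_j+p(m+k)$ (which scales to a nonzero constraint) into a harmless boundary phenomenon by moving into the strictly-increasing locus. Everything else is bookkeeping around the product structure and the density of rational points.
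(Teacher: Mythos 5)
Your proposal is correct and takes essentially the same route as the paper: both identify the valuation image as $\Z^2_{\geq 0}\times \mathcal{P}(p(m+k),pk)$ via the trailing-term theorem, use homogeneity of the defining inequalities under scaling, and prove density in $\Delta(m+k,k)$ by exploiting that the separation condition is vacuous once the $a$-coordinates are distinct. The only cosmetic difference is that the paper establishes this last point through integer points in the strict interior of $\Delta(m+k,k)$, while you use the explicit perturbation $a_j\mapsto a_j+j\epsilon$ --- the same idea in a slightly more hands-on form.
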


We expect that the combinatorics of these polytopes is related to the birational geometry of the nested Hilbert scheme from wall-crossing studied by Nakajima and Yoshioka \cite{NY}.

\section*{Acknowledgments}

The work of E. G. was partially supported by the NSF grant DMS-2302305. The work of A. O. was partially supported by the NSF grant DMS-2200798.
The work of J. T. was supported in part by the Pacific Institute for the Mathematical Sciences  (PIMS).






\section{Birational relation}
\label{sec:birational-relation}
In this section we relate the geometries of the nested  Hilbert scheme on \(\C^2\) and the Hilbert scheme of the blow up \(\mathrm{Bl}_0\C^2\). 

\subsection{Nested Hilbert schemes}

Let $S$ be a surface, we denote by $\Hilb^n(S)$ the Hilbert scheme of $n$ points on $S$ and by \(U\subset \Hilb^n(S)\times S\) the universal scheme. We have two projections.
$$
\begin{tikzcd}
 & U \arrow{dl}{p_n} \arrow{dr}{p_S}& \\
\Hilb^n(S) & & S
\end{tikzcd}
$$
and the scheme-theoretic fiber of $p_n$ over $\CI$ is $\Spec\ \CO/\CI$. There is a tautological rank $n$ vector bundle $\CT=p_{n*}(\CO_{U})$ and the tautological line bundle $\CO(1)=\det 
\CT$. 
More generally, given a divisor \(E\) on \(S\) there is
rank \(n\) vector bundle \(\CO(E)^{[n]}=p_{n*}(\CO_{U}\otimes p_S^*(\CO(E))\) 
and the line bundle \(\CO(E)_n=\det(\CO(E)^{[n]})\). 
 If $E\subset S$ is a smooth irreducible curve then
the latter has a geometric description as the subsheaf of functions vanishing on  the locus
\(E_n\subset \Hilb^n(S)\) where \(E_n\) is the locus of ideal sheaves \(\mathcal{I}\) such that
\(\mathrm{supp}(\CO/\mathcal{I})\cap E\ne \emptyset\).
Then \(E_n\) is Cartier divisor and  \(\CO(E)_n=\CO(E_n)\).

\begin{proposition}\cite{Fogarty2}
The Picard group of $\Hilb^n(S)$ is freely generated by $\CO(1)$ and the divisors $E_n$ for $E\in \Pic(S)$, so
$$
\Pic(\Hilb^n(S))\simeq \Z\langle \CO(1)\rangle\oplus \Pic(S).
$$
\end{proposition}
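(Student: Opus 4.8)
The plan is to combine the smoothness of $\Hilb^n(S)$ with a codimension-two excision along the Hilbert--Chow morphism $\rho\colon \Hilb^n(S)\to \Sym^n(S)$. Since $\Hilb^n(S)$ is smooth (a theorem of Fogarty), its Picard group coincides with its divisor class group, and I may freely delete closed subsets of codimension $\ge 2$. Let $B\subset \Hilb^n(S)$ be the boundary divisor parametrizing non-reduced subschemes; it is irreducible, and its complement $U^\circ$---the locus of $n$ distinct reduced points---maps isomorphically onto the open subset of $\Sym^n(S)$ of multiplicity-free cycles. Excision then yields the exact sequence
$$\Z\cdot[B]\xrightarrow{\ \alpha\ }\Pic(\Hilb^n(S))\longrightarrow \Pic(U^\circ)\longrightarrow 0,$$
so the computation reduces to identifying $\Pic(U^\circ)$ and pinning down $\alpha([B])$.

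First I would compute $\Pic(U^\circ)$. The configuration space $\mathrm{Conf}_n(S)=S^n\setminus\{\text{diagonals}\}$ is an étale Galois $S_n$-cover of $U^\circ$, and the diagonals have codimension $2$ in $S^n$, so $\Pic(\mathrm{Conf}_n(S))=\Pic(S^n)$. Descent along the $S_n$-cover expresses $\Pic(U^\circ)$ through the equivariant Picard group: the $S_n$-invariant part of $\Pic(S^n)$ is the diagonal copy of $\Pic(S)$, realized by the symmetrized bundles $\CO(E)_n|_{U^\circ}$, while the choices of $S_n$-linearization differ by the characters $\mathrm{Hom}(S_n,\C^\times)=\{\pm1\}$, contributing a single class of order two. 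For the rational surfaces relevant here ($S=\C^2$ or $\Bl_0\C^2$) the cross terms and $\mathrm{Pic}^0$ contributions to $\Pic(S^n)$ vanish, giving
$$\Pic(U^\circ)\cong \Pic(S)\oplus (\Z/2)\langle L_{\sgn}\rangle,$$
where $L_{\sgn}$ is the sign line bundle. The key observation is that $\CO(1)|_{U^\circ}=\det\CT|_{U^\circ}=L_{\sgn}$, since over distinct points $\CT$ is the permutation bundle and its determinant is the sign representation.

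To assemble the pieces, consider the restriction map $r\colon\Pic(\Hilb^n(S))\to\Pic(U^\circ)=\Pic(S)\oplus(\Z/2)$. The symmetrization $E\mapsto\CO(E)_n$ splits off the $\Pic(S)$ factor, and $r(\CO(1))$ is the generator of the $\Z/2$. I would extract the relation between $\CO(1)$ and $B$ directly from exactness: since $r(2\,\CO(1))=0=r([B])$, the class $2\,\CO(1)-[B]$ lies in $\ker r=\mathrm{im}(\alpha)=\Z[B]$, and evaluating degrees on a test curve forces $[B]=2\,\CO(1)$ (in particular $[B]$ carries no $\Pic(S)$ component, because $\Pic(S)$ injects under $r$). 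The test curve I would use is the punctual $\mathbb{P}^1=\mathbb{P}(T_pS)\subset\Hilb^n(S)$ of length-two schemes supported at $p$, lying over a cycle $2p+q_3+\cdots+q_n$: the morphism $\rho$ contracts it, so every $\CO(E)_n$ has degree $0$, whereas $\CO(1)=\det\CT$ has degree $\pm1$ and $B$ has degree $\mp2$ (consistently with $\CO(1)^{\otimes2}=\CO(B)$ on this curve). Surjectivity of $\Z\langle\CO(1)\rangle\oplus\Pic(S)\to\Pic(\Hilb^n(S))$ then follows by subtracting from any class its $\Pic(U^\circ)$-components and using $[B]=2\,\CO(1)$ to absorb the remaining kernel term; injectivity follows because a relation $a\,\CO(1)+\CO(E)_n=0$ restricts to degree $\pm a$ on the test curve, forcing $a=0$ and then $\CO(E)_n=0$, hence $E=0$.

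The main obstacle is the reconciliation in the last paragraph: excision alone only exhibits $[B]$ (equivalently $2\,\CO(1)$) as a class, so the real content is to see that $\CO(1)$ itself---a genuine line bundle that is a square root of $\CO(B)$---is the primitive generator of the $\Z$ summand rather than $B$. The punctual $\mathbb{P}^1$ computation is what makes this factor of two precise and, at the same time, yields the injectivity. A secondary point requiring care is the equivariant descent identifying $\Pic(U^\circ)$, including the order-two sign class; this is clean for the rational surfaces used in this paper, but in general one must control the cross terms and $\mathrm{Pic}^0$ of $S^n$ under the $S_n$-action.
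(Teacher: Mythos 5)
The paper itself gives no proof of this proposition---it is quoted from Fogarty \cite{Fogarty2}---so your argument is necessarily an independent reconstruction rather than a parallel of anything in the text. Its skeleton (smoothness of $\Hilb^n(S)$, excision along the irreducible boundary divisor $B$, computation of $\Pic(U^\circ)$ by descent along the free \'etale $S_n$-cover $\mathrm{Conf}_n(S)\to U^\circ$, and a degree computation on the punctual $\P^1$ to identify $\ker\bigl(r\colon \Pic(\Hilb^n(S))\to \Pic(U^\circ)\bigr)$ with $\Z[B]=2\Z\,[\CO(1)]$) is the standard modern route to Fogarty's theorem and is essentially sound. Do note the scope you yourself flag: the descent step requires $\Pic(S^n)=\Pic(S)^{\oplus n}$ and $\CO^*(\mathrm{Conf}_n(S))=\C^\times$, so what you actually prove is the statement for surfaces such as $\C^2$, $\Bl_0\C^2$, or more generally smooth toric surfaces---not Fogarty's theorem for arbitrary $S$. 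That is enough for everything in this paper, and is arguably a feature rather than a bug, since \cite{Fogarty2} treats projective surfaces while the paper applies the proposition to quasi-projective ones.

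There is, however, one concrete error to fix: the relation you extract between $B$ and $\CO(1)$ has the wrong sign, and your own numbers already contradict it. You conclude $[B]=2\,\CO(1)$ and write ``$\CO(1)^{\otimes 2}=\CO(B)$,'' yet you assign degrees $\pm1$ to $\CO(1)$ and $\mp2$ to $B$ on the test curve; opposite signs force $\CO(1)^{\otimes2}=\CO(-B)$, i.e.\ $[B]=-2\,[\CO(1)]$. Concretely, on the punctual $\P^1\subset\Hilb^2(\C^2)$ the bundle $\det\CT$ has degree $+1$ (the section $1\wedge x$ of $\det\CT$ vanishes simply at the scheme contained in the $y$-axis direction), while $\CO(B)$ restricts to the normal bundle of the exceptional divisor of the $A_1$-resolution, which has degree $-2$. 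This is exactly the identity $\CO(\partial_n)=\CO(-2)$ that the paper records shortly after this proposition, and running your own computation ($2\,\CO(1)=(c+1)[B]$, compare degrees) gives $c=-2$, hence $[B]=-2\,\CO(1)$. The slip is harmless for the structure of your proof---surjectivity and freeness only use that $\ker(r)$ equals $\Z[B]=2\Z\,[\CO(1)]$ and that $[B]$ has infinite order, both of which are insensitive to the sign---but as stated the relation is false and would corrupt the sign conventions used later in the paper (e.g.\ in the definition $\CL=\CO(-\delta/2)$).
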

In particular, for $S=\Bl_0\C^2$ the Picard group of $\Hilb^n(\Bl_0\C^2)$ is freely generated by $\CO(1)$ and $E_n$ where $E\subset \Bl_0\C^2$ is the exceptional divisor.

Next, we consider the nested Hilbert schemes and the diagram \eqref{eq: nested projections intro}.
Consider the natural projection $\pi:\Hilb^{n,n+1}(\C^2) \to \C^2$ sending a pair of subschemes $(Z,Z')$ to the extra support point of $Z'$, and let $\Hilb^{n,n+1}_0(\C^2) = \pi^{-1}(0)$. The following is well known but we provide the proof for completeness.
\begin{lemma}
\label{lem: factorization}
We have
\[ \Hilb^{n,n+1}(\C^2) \simeq \C^2 \times \Hilb^{n,n+1}_0(\C^2). \]
\end{lemma}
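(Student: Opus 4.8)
The plan is to construct an explicit isomorphism by exploiting the translation action of $\C^2$ on itself. The key idea is that the extra support point of $Z'$ (the point where $Z'$ differs from $Z$, recorded by the map $\pi$) gives a canonical ``center'' that we can use to trivialize the bundle over $\C^2$. Concretely, I would first make precise the additive group structure: $\C^2$ acts on $\Hilb^{n,n+1}(\C^2)$ by simultaneous translation of both subschemes, $v\cdot(Z,Z') = (Z+v, Z'+v)$, and this action is compatible with the projection $\pi$ in the sense that $\pi(v\cdot(Z,Z')) = \pi(Z,Z') + v$. In particular $\pi$ is equivariant for the translation action on the target $\C^2$.

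Given this equivariance, I would define the candidate isomorphism
\[
\Phi\colon \Hilb^{n,n+1}(\C^2) \longrightarrow \C^2\times \Hilb^{n,n+1}_0(\C^2), \qquad (Z,Z')\longmapsto \bigl(\pi(Z,Z'),\ (-\pi(Z,Z'))\cdot (Z,Z')\bigr).
\]
The first coordinate records the extra support point $v=\pi(Z,Z')$, and the second translates the pair back so that its extra support point lands at the origin; by the equivariance of $\pi$ we have $\pi\bigl((-v)\cdot(Z,Z')\bigr) = \pi(Z,Z') - v = 0$, so the second coordinate genuinely lies in $\Hilb^{n,n+1}_0(\C^2)=\pi^{-1}(0)$ and $\Phi$ is well defined. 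The inverse map is then immediate: $(v, W)\mapsto v\cdot W$, where $W\in\Hilb^{n,n+1}_0(\C^2)$ is translated by $v$. One checks $\Phi^{-1}\circ\Phi = \mathrm{id}$ and $\Phi\circ\Phi^{-1}=\mathrm{id}$ directly from the group action axioms and the equivariance of $\pi$.

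The remaining step is to verify that $\Phi$ and $\Phi^{-1}$ are morphisms of schemes, not merely bijections of points. Here I would note that the translation action $\C^2\times \Hilb^{n,n+1}(\C^2)\to \Hilb^{n,n+1}(\C^2)$ is algebraic (it is induced by the algebraic translation action on $\C^2$ functorially on ideals), and that $\pi$ is a morphism. Since $\Phi$ is assembled from $\pi$, the negation map on $\C^2$, and the action map, all of which are algebraic, $\Phi$ is a morphism; the same applies to $\Phi^{-1}$. Their being mutually inverse set-theoretically then upgrades to a scheme isomorphism.

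The main obstacle is largely bookkeeping rather than conceptual: one must be careful that $\pi$ really does single out the \emph{extra} support point $\mathrm{supp}(\CI/\CJ)$ as a well-defined point of $\C^2$ (a priori the quotient $\CI/\CJ$ is a length-one module, hence supported at a single reduced point, so this is fine), and that translation acts compatibly on the \emph{nested pair} rather than on $Z$ and $Z'$ separately in an incompatible way. Once the equivariance $\pi(v\cdot(Z,Z'))=\pi(Z,Z')+v$ is established, everything follows formally from the fact that translation is a free action on the $\C^2$-factor detected by $\pi$. I expect no serious analytic or cohomological difficulty; the content is entirely the observation that $\pi$ admits an equivariant section via the group structure of $\C^2$.
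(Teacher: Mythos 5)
Your proposal is correct and follows essentially the same route as the paper: both use the translation action of $\C^2$ and the map $(Z,Z')\mapsto \bigl(\pi(Z,Z'),\ \tau_{-\pi(Z,Z')}(Z,Z')\bigr)$ with inverse $(v,W)\mapsto \tau_v(W)$. Your additional remarks on equivariance of $\pi$ and algebraicity of the translation action simply make explicit what the paper dismisses as ``clearly.''
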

\begin{proof}
Given $v=(v_1,v_2)$, we can define the translation 
$$
\tau_v:\C^2\to \C^2,\ \tau_v(x,y)=(x+v_1,y+v_2).
$$
This translation extends to $\Hilb^{n,n+1}(\C^2)$ where we also denote it by $\tau_v$. We can define the maps
$$
\Hilb^{n,n+1}(\C^2) \to \C^2 \times \Hilb^{n,n+1}_0(\C^2),\ (Z,Z')\mapsto (\pi(Z,Z'),\tau_{-\pi(Z,Z')}(Z,Z'))
$$
and
$$
\C^2 \times \Hilb^{n,n+1}_0(\C^2)\to \Hilb^{n,n+1}(\C^2),\ (v,(Z,Z'))\mapsto \tau_{v}(Z,Z').
$$
Clearly, these are inverse to each other and the result folows.
\end{proof}


We set \(\delta\subset \Hilb^{n,n+1}_0(\C^2)\) be the divisor consisting of pairs \((\mathcal{I},\mathcal{I}')\) such that
\(\mathrm{supp}(\CO/\mathcal{I})\) contains \(0\). Let \(\partial=\partial_n\subset \Hilb^n(\C^2)\), \(\partial_{n+1}\subset \Hilb^{n+1}(\C^2)\)
be the big diagonal divisors, i.e. the loci of nonreduced subschemes in each. It is well-known that on these Hilbert schemes we have $\CO(\partial_n) = \CO(-2)$ and $\CO(\partial_{n+1}) = \CO(-2)$. Then we have \(\delta=\pi_{n+1}^{-1}(\partial_{n+1})\setminus\pi_{n}^{-1}(\partial_n)\) and may define the line bundle
\[\mathcal{L}=\CO(-\delta/2)=\pi_n^*(\CO(-1))\otimes \pi_{n+1}^*(\CO(1)). \]


More generally, we may consider line bundles
$$\
\CO(m,k) = \pi_n^* \CO(m)\otimes \pi_{n+1}^*\CO(k)
$$
In particular, we have 
\begin{equation}
\label{eq: O(m) L notation}
\pi_n^*\CO(m)\otimes \CL^{k}=\CO(m-k,k).
\end{equation}
By \cite{Ryan}, $\Pic(\Hilb^{n,n+1}(\C^2))=\Z^2$ is spanned by $\CO(m,k)$.


\subsection{Birational relation}

We claim that $\Hilb^{n,n+1}_0(\C^2)$ and $\Hilb^n (\mathrm{Bl}_0 \C^2)$ are isomorphic outside of a codimension 2 locus.
In particular, we construct a graph of the birational  map \[\beta_n: \Hilb_0^{n,n+1}(\C^2)\dashrightarrow \Hilb^n(\Bl_0\C^2).\]
First, we observe that \(\Hilb_0^{1,2}(\C^2)\) and \(\Bl_0\C^2\) are naturally isomorphic and set \(\beta_1\) to be this isomorphism.
As above, let \(\pi_n:\Hilb^{n,n+1}_0(\C^2)\to \Hilb^n(\C^2)\) be the natural projection, and \(\partial \subset \Hilb^n(\C^2)\) is the big diagonal divisor. 

Then
we can define a regular map \(\beta_n^\circ: \Hilb_0^{n,n+1}(\C^2)\setminus \pi_n^{-1}(\partial)\to \Hilb^n(\Bl_0\C^2)\) by taking \(n\)-th symmetric power of
the map \(\beta_1\). More precisely, the complement of $\pi_n^{-1}(\partial)$ consists of unordered $n$-tuples of distinct points $Z=(P_1,\ldots,P_n)$ and another subscheme $Z'$ such that $Z\subset Z'$ and $\supp(Z')=\supp(Z)\cup 0$. If all of $P_i$ are distinct from $0$, then $Z'$ is uniquely determined by $Z$ and we just send $(Z,Z')$ to the corresponding $n$-tuple of points on $\Bl_0\C^2$. If $P_i=0$ for exactly one $i$ then  we can interpret $(Z,Z')$ as a point in $\Hilb^{n-1}(\C^2\setminus 0)\times \Hilb^{1,2}(\C^2)$ and use the map $\beta_1$ to define its image.  

Respectively, we define  \(\Gamma\subset \Hilb_0^{n,n+1}(\C^2)\times \Hilb^n(\Bl_0(\C^2))\) to be the closure of
graph of the map \(\beta_n^\circ\).

\begin{proposition}
  \label{prop: codim 2}
  Let \(f:\Gamma\to \Hilb_0^{n,n+1}(\C^2)\) and \(s:\Gamma\to \Hilb^n(\Bl_0\C^2)\) be the natural projections. Then
  the birational morphism \(\beta_n=s\circ f^{-1}\) is regular in codimension \(2\). This map yields an isomorphism
  of the Picard groups:
  \[ \beta_n(\partial)=\partial,\quad \beta_n(\delta)=E_n,\quad \beta_n(\pi_n^*\CO(1))=\CO(1).\]
\end{proposition}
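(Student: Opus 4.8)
The plan is to prove the two assertions separately: first that $\beta_n$ restricts to an isomorphism between open sets whose complements have codimension $\ge 2$ (which is what ``regular in codimension $2$'' means for our purposes, and already forces an isomorphism of Picard groups by restriction to the common open set), and then that under this identification the three named divisor classes correspond. Since both $\Hilb^{n,n+1}_0(\C^2)$ and $\Hilb^n(\Bl_0\C^2)$ are smooth, it suffices to check that $\beta_n$ is a local isomorphism at the generic point of every prime divisor on each side; then the indeterminacy loci of $\beta_n$ and of $\beta_n^{-1}$ meet no divisor and are automatically of codimension $\ge 2$. The prime divisors in play are $\pi_n^{-1}(\partial)$ and $\delta$ on the source and $\partial$ and $E_n$ on the target, and I expect to match them as $\partial\leftrightarrow\partial$ and $\delta\leftrightarrow E_n$.

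First I would set up the away-from-origin model. On the open set $V\subset\Hilb^{n,n+1}_0(\C^2)$ where $0\notin\supp(\CO/\CI)$, the subscheme $Z'$ is forced to be $Z\sqcup\{0\}$, so $(Z,Z')$ is determined by $Z$ and $V\cong\Hilb^n(\C^2\setminus 0)$; dually, on the open set $W\subset\Hilb^n(\Bl_0\C^2)$ where no point lies on $E$, the blowdown gives $W\cong\Hilb^n(\C^2\setminus 0)$, and $\beta_n$ is the identity under these two identifications. The generic point of $\pi_n^{-1}(\partial)$ has two points of $Z$ colliding at a generic $Q\ne 0$ with the origin absent from $\supp(\CO/\CI)$, hence lies in $V$; likewise the generic point of $\partial$ on the blowup side lies in $W$. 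Thus $\beta_n$ and $\beta_n^{-1}$ are local isomorphisms there, and $\beta_n(\partial)=\partial$.

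Next I would treat the near-origin behaviour, which is the heart of the matter. At the generic point of $\delta$ the scheme $Z$ is one reduced point at the origin together with $n-1$ distinct points away from it, and $Z'$ records a tangent direction at $0$. Using the locality of Hilbert schemes of points—over a locus where the support splits into separated clusters, the Hilbert scheme factors as a product of the Hilbert schemes of the clusters—both spaces factor \'etale-locally as $\Hilb_0^{1,2}(\C^2)\times\Hilb^{n-1}(\C^2\setminus 0)$ and as $N\times\Hilb^{n-1}(\C^2\setminus 0)$ for a neighbourhood $N$ of a point of $E$ in $\Bl_0\C^2$, and $\beta_n$ becomes $\beta_1\times\mathrm{id}$. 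Since $\beta_1:\Hilb_0^{1,2}(\C^2)\to\Bl_0\C^2$ is an isomorphism, $\beta_n$ is a local isomorphism here, sending the length-$2$ cluster at $0$ (a tangent direction, i.e.\ a point of $E\cong\P^1$) to a point of $E$; this gives $\beta_n(\delta)=E_n$, and symmetrically $\beta_n^{-1}$ is a local isomorphism at the generic point of $E_n$. Combining the three cases, the loci where $\beta_n$ or $\beta_n^{-1}$ fails to be a local isomorphism are contained, respectively, in the locus where the cluster of $Z$ at the origin has length $\ge 2$ and in the locus where at least two points map to the origin, each of codimension $\ge 2$; this establishes regularity in codimension $2$.

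Finally, the codimension-$2$ isomorphism identifies $\Pic(\Hilb^{n,n+1}_0(\C^2))$ with $\Pic(\Hilb^n(\Bl_0\C^2))$ by restriction to the common open subset, and it remains to match the last generator. Having matched $\delta\leftrightarrow E_n$ and $\partial\leftrightarrow\partial$, the identity $\beta_n(\pi_n^*\CO(1))=\CO(1)$ follows because on the common open subset the length-$n$ schemes on $\C^2$ and on $\Bl_0\C^2$ correspond under $\beta_1$ and are reduced, so the two tautological bundles have the same fibers $H^0(\CO_Z)$ and hence the same determinant. The step I expect to be the main obstacle is making the \'etale-local factorization precise and verifying that the graph closure $\Gamma$ genuinely realizes the isomorphism there—that is, that $f$ and $s$ have reduced singleton fibers (equivalently that $\Gamma$ is smooth) over the generic points of $\delta$ and $E_n$, so that taking the closure introduces no spurious components. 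Once the local model $\beta_1\times\mathrm{id}$ is in hand this reducedness is automatic, so the crux is really the structure theorem for Hilbert schemes near a splitting of the support.
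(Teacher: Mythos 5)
Your overall strategy coincides with the paper's: both arguments restrict to the open locus where $Z$ has multiplicity at most one at the origin (your three cases taken together cover exactly the paper's open set $V$, and its image is the locus of $Z''$ with total multiplicity at most one along $E$), both use the local product structure of Hilbert schemes over a splitting of the support together with the isomorphism $\beta_1$, and both then match $\partial\leftrightarrow\partial$ and $\delta\leftrightarrow E_n$. The genuine gap is that you assert, without proof, that the two bad loci --- where the cluster of $Z$ at the origin has length $\geq 2$, resp.\ where the total multiplicity of $Z''$ along $E$ is $\geq 2$ --- have codimension $\geq 2$. That assertion \emph{is} the content of ``regular in codimension $2$,'' and it is not a triviality: in $\Hilb^n(\C^2)$ the locus of subschemes with multiplicity $\geq 2$ at the origin has codimension $3$, but its preimage in $\Hilb^{n,n+1}_0(\C^2)$ has codimension only $2$, because $\pi_n$ has positive-dimensional fibers precisely over that locus. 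The bound you need is therefore sharp, and it requires an actual dimension count. The paper supplies it via local models: the locus where $Z$ has multiplicity $k\geq 2$ at the origin is locally $\Hilb^{n-k}(\C^2\setminus 0)\times \Hilb^{k,k+1}(\C^2,0)$, of codimension $2n-2(n-k)-k=k$, using Cheah's formula $\dim \Hilb^{k,k+1}(\C^2,0)=k$; and the locus in $\Hilb^n(\Bl_0\C^2)$ with points of multiplicities $k_1,\dots,k_s$ on $E$ is locally $\Hilb^{n-\sum_i k_i}(\Bl_0\C^2\setminus E)\times \prod_{i=1}^s \Hilb^{k_i}(\C^2,0)\times E^s$, of codimension $\sum_i k_i$, using $\dim \Hilb^{k_i}(\C^2,0)=k_i-1$. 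Without these (or equivalent) counts your proof does not establish the proposition; by contrast, the \'etale-local factorization you flag as the ``crux'' is a standard fact that the paper also invokes implicitly, so it is not where the real work lies.

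A secondary flaw: your argument for $\beta_n(\pi_n^*\CO(1))=\CO(1)$ identifies the two tautological bundles only over the locus of \emph{reduced} schemes, which is the complement of the divisor $\partial$ inside the common open set; two line bundles agreeing off a divisor can differ by a multiple of that divisor, so this proves nothing as stated. The paper instead uses the relation $\CO(1)=\CO(-\partial/2)$ (from $\CO(\partial_n)=\CO(-2)$, recorded earlier) together with the already-established matching $\beta_n(\partial)=\partial$. Alternatively, you could repair your argument by identifying the universal families (hence the tautological sheaves and their determinants) over the entire common open set, not just fiberwise over the reduced locus. Your opening reduction (``check at the generic point of every prime divisor'') is also circular as phrased, since that condition is equivalent to the codimension claim itself, but this is cosmetic because your subsequent argument has the correct structure.
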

\begin{proof}
Let $V\subset \Hilb^{n,n+1}_0$ be the open locus  of pairs of subschemes $(Z,Z')$ such that $Z$ has multiplicity at most $1$ at the origin.
Clearly, $\Hilb_0^{n,n+1}(\C^2)\setminus \pi_n^{-1}(\partial)\subset V$ and
by the above discussion the map $\beta_n^{\circ}$ extends to $V$.

Let us prove that the complement of $V$ has codimension $2$. Indeed, the locus in $\Hilb^{n,n+1}_0$ where $Z$ has multiplicity $k\ge 2$ at the origin is locally modeled on $\Hilb^{n-k}(\C^2\setminus 0)\times \Hilb^{k,k+1}(\C^2,0)$ and has codimension
$$
2n-2(n-k)-k=k.
$$
Here we used the fact that $\dim\Hilb^{n,n+1}_0(\C^2)=2n$ (this follows, say, from Lemma \ref{lem: factorization}) and $\dim \Hilb^{k,k+1}(\C^2,0)=k$ \cite{Cheah}.

The image $U=\beta_n(V)$ is the locus in $\Hilb^n(\Bl_0\C^2)$ consisting of subschemes $Z''$ such that the total multiplicity of $Z''$ on the exceptional divisor $E$ is at most $1$. We claim that the complement of $U$ also has codimension $2$. Indeed, consider the locus in  $\Hilb^n(\Bl_0\C^2)$ where $Z''$ has $s$ points with multiplicities $k_1,\ldots,k_s$ on $E$. It is locally modeled on 
$$
\Hilb^{n-k_1-\ldots-k_s}(\Bl_0\C^2\setminus E)\times \prod_{i=1}^{s}\Hilb^{k_i}(\C^2,0)\times E^s 
$$
and has codimension
$$
2n-2(n-k_1-\ldots-k_s)-\sum_{i=1}^{s}(k_i-1)-s=\sum_{i=1}^{s}k_i.
$$


   The  statement about Picard groups follows since \(\beta^{\circ}_n(\partial\cap V)\)    and \(\beta^{\circ}_n(\delta\cap V)\) are open sets inside irreducible
   divisors \(\partial\) and \(E_n\), and $\CO(1)=-\frac{1}{2}\partial$.
   \end{proof}



Since the varieties \(\Hilb_0^{n,n+1}(\C^2)\) and \(\Hilb^n(\Bl_0(\C^2))\) are smooth and hence normal we obtain an important statement

\begin{corollary}
\label{cor:birationalequiv}
    For all $m,k\in \Z$, there is an identification
    \[ H^0(\Hilb^{n,n+1}_0(\C^2),\CO(m)\otimes \CL^{\otimes k}) = H^0(\Hilb^n (\mathrm{Bl}_0 \C^2),\CO(m)\otimes \CO(kE)_n)  \]
    where $\CO(E)_n$ denotes the line bundle obtained by pulling back the symmetrization of $\CO(E)$ from the symmetric power of $\mathrm{Bl}_0 \C^2$.
\end{corollary}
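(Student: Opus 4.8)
The plan is to deduce this equality of section spaces formally from Proposition \ref{prop: codim 2} together with the fact that, on a normal variety, global sections of a locally free sheaf extend uniquely across any closed subset of codimension at least $2$. Both varieties in play are smooth: $\Hilb^{n,n+1}_0(\C^2)$ is smooth because $\Hilb^{n,n+1}(\C^2)\simeq \C^2\times \Hilb^{n,n+1}_0(\C^2)$ is smooth by Lemma \ref{lem: factorization}, and $\Hilb^n(\Bl_0\C^2)$ is smooth since it is a Hilbert scheme of points on a smooth surface. In particular both are normal, which is all that the extension argument requires.

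Concretely, I would reuse the open loci appearing in the proof of Proposition \ref{prop: codim 2}: let $V\subset \Hilb^{n,n+1}_0(\C^2)$ be the locus where the subscheme has multiplicity at most $1$ at the origin, and let $U=\beta_n(V)\subset \Hilb^n(\Bl_0\C^2)$ be its image, so that $\beta_n$ restricts to an isomorphism $\beta_n^\circ\colon V\xrightarrow{\ \sim\ }U$ while both complements have codimension $\ge 2$. For a locally free sheaf $\calF$ on a normal variety $W$ and an open immersion $j\colon W'\hookrightarrow W$ whose complement has codimension $\ge 2$, one has $j_*(\calF|_{W'})=\calF$ and hence $H^0(W,\calF)=H^0(W',\calF|_{W'})$; this follows by tensoring the algebraic Hartogs identity $j_*\CO_{W'}=\CO_W$ with the locally free sheaf $\calF$. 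Applying this on each side reduces both section spaces to sections over $V$ and over $U$ respectively:
\[
H^0\!\left(\Hilb^{n,n+1}_0(\C^2),\CO(m)\otimes\CL^{\otimes k}\right)\cong H^0\!\left(V,\left(\CO(m)\otimes\CL^{\otimes k}\right)\big|_V\right),
\]
and similarly for $\Hilb^n(\Bl_0\C^2)$ and $U$. It then remains to check that $\beta_n^\circ$ carries the restriction of $\CO(m)\otimes\CL^{\otimes k}$ to that of $\CO(m)\otimes\CO(kE)_n$, after which pulling back sections along the isomorphism $\beta_n^\circ$ identifies the two middle spaces and the corollary follows by composition.

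The only delicate point, and the step I would treat most carefully, is this last identification of line bundles over $V\cong U$. Because removing a codimension $\ge 2$ locus from a smooth variety does not change its Picard group, this matching is equivalent to the Picard-group statement of Proposition \ref{prop: codim 2}, namely $\beta_n(\pi_n^*\CO(1))=\CO(1)$ and $\beta_n(\delta)=E_n$, together with the identity $\CL=\CO(-\delta/2)$ on the source. Unwinding these, one must track the half-integral boundary coefficient coming from $\CO(-\delta/2)$ against the relation between the tautological class $\CO(1)$ and the divisor $E_n$ on $\Hilb^n(\Bl_0\C^2)$, and reconcile the description of $\CO(E)_n$ as a determinant line bundle $\det \CO(E)^{[n]}$ with its description as the pullback of the symmetrization of $\CO(E)$; these two descriptions differ by a power of $\CO(1)$, so obtaining the equality of line bundles exactly (rather than merely up to a boundary twist) is where the care is needed. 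Everything else in the argument is formal once Proposition \ref{prop: codim 2} is granted.
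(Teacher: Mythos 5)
Your proposal is correct and is essentially the paper's own argument: the paper deduces the corollary in a single sentence from Proposition \ref{prop: codim 2} together with the smoothness (hence normality) of both varieties, which is exactly the extend-sections-across-codimension-$2$ (Hartogs) mechanism you spell out on the open sets $V$ and $U=\beta_n(V)$. The line-bundle matching you flag as the delicate step is handled in the paper only implicitly, via the Picard-group statement of Proposition \ref{prop: codim 2}, so your write-up is, if anything, more explicit than the original about the bookkeeping between $\CL$, $\delta$, $E_n$, and the two descriptions of $\CO(E)_n$.
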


The fact that \(\Hilb^{n,n+1}_0(\C^2)\) and \(\Hilb^n(\mathrm{Bl}_0\C^2 )\) are isomorphic outside of  codimension \(2\) almost immediately
implies
\begin{proposition}
  \(\Hilb^{n,n+1}_0(\C^2)\) is Frobenius split.
\end{proposition}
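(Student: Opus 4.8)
The plan is to deduce Frobenius splitting from the codimension $2$ comparison of Proposition \ref{prop: codim 2}, working after reduction modulo a prime $p$. All the schemes in play --- the Hilbert scheme $\Hilb^n(\Bl_0\C^2)$, the nested scheme $\Hilb^{n,n+1}_0(\C^2)$, and the correspondence $\Gamma$ --- are defined over $\Z$, and the dimension counts proving Proposition \ref{prop: codim 2} are characteristic-free, so after base change to an algebraically closed field $k$ of characteristic $p>0$ we retain smooth varieties $X=\Hilb^{n,n+1}_0(\C^2)_k$ and $Y=\Hilb^n(\Bl_0\C^2)_k$ together with open subsets $V\subset X$ and $U\subset Y$, an isomorphism $V\cong U$, and $\mathrm{codim}(X\setminus V)\ge 2$, $\mathrm{codim}(Y\setminus U)\ge 2$. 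Frobenius splitting of $X$ is understood in this reduction-mod-$p$ sense, and I would prove it by transporting a splitting of $Y$ across $V\cong U$.

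The first step is the external input: $Y=\Hilb^n(\Bl_0\C^2)$ is Frobenius split. To see this I would compactify $\Bl_0\C^2$ to a smooth projective toric surface, e.g. $\bar S=\Bl_0\P^2$ obtained by blowing up a torus-fixed point of $\P^2$, so that $\Bl_0\C^2$ is the blowup of the $T$-invariant affine chart and is open in $\bar S$. Being a smooth projective toric variety, $\bar S$ is Frobenius split, compatibly with its toric boundary. By the results of Kumar and Thomsen on Hilbert schemes of points on Frobenius split surfaces, $\Hilb^n(\bar S)$ is then Frobenius split. Since $\Bl_0\C^2$ is open in $\bar S$, the scheme $\Hilb^n(\Bl_0\C^2)$ is an open subscheme of $\Hilb^n(\bar S)$, and the restriction of a Frobenius splitting to an open subscheme is again a Frobenius splitting; hence $Y$ is Frobenius split.

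The second step is the transfer across codimension $2$, which is where Proposition \ref{prop: codim 2} enters. Restricting the splitting of $Y$ to the open subscheme $U$ and transporting it along $V\cong U$ gives a Frobenius splitting $\phi_V$ of $V$. On the smooth variety $X$ one has a canonical identification $\mathrm{Hom}_{\CO_X}(F_*\CO_X,\CO_X)\cong H^0(X,\omega_X^{1-p})$, and likewise on $V$; since $X$ is smooth and $X\setminus V$ has codimension $\ge 2$, the restriction map $H^0(X,\omega_X^{1-p})\to H^0(V,\omega_V^{1-p})$ is an isomorphism by Hartogs extension. Thus $\phi_V$ extends uniquely to a map $\phi_X\colon F_*\CO_X\to\CO_X$. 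The composite $\phi_X\circ F^\#$ is a morphism $\CO_X\to\CO_X$ whose restriction to the dense open $V$ equals the identity; as $X$ is integral this forces $\phi_X\circ F^\#=\mathrm{id}$, so $\phi_X$ is a Frobenius splitting of $X$.

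I expect the only genuine obstacle to be the external input rather than the transfer, which is essentially formal. Specifically, one must confirm that the hypotheses of the Hilbert-scheme splitting theorem are met by the non-projective surface $\Bl_0\C^2$; passing to the projective toric compactification $\bar S$ and restricting addresses this, using that the canonical toric splitting of $\bar S$ is compatible with every torus-invariant divisor. A secondary point to verify is that the identification of open loci in Proposition \ref{prop: codim 2} and the codimension estimates persist after reduction modulo $p$, which holds because those arguments are purely dimension-theoretic and independent of the characteristic.
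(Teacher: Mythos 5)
Your proof is correct and takes essentially the same route as the paper: establish Frobenius splitting of $\Hilb^n(\Bl_0\C^2)$ via the toric surface plus the Hilbert-scheme splitting theorem (the paper cites \cite[Theorem 7.5.2]{BrionKumar}, which is the Kumar--Thomsen result you invoke), then transfer the splitting to $\Hilb^{n,n+1}_0(\C^2)$ across the codimension-$2$ locus of Proposition \ref{prop: codim 2} using smoothness. The only differences are expository: you inline the proof of the codimension-$2$ extension step (the paper cites \cite[Lemma 1.1.7]{BrionKumar}) via the identification $\mathrm{Hom}_{\CO_X}(F_*\CO_X,\CO_X)\cong H^0(X,\omega_X^{1-p})$ and Hartogs, and you are more careful than the paper about passing through a projective compactification and about the meaning of Frobenius splitting after reduction mod $p$.
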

\begin{proof}
  \(\Bl_0(\C^2)\) is Frobenius split since it is a toric surface. Then \(\Hilb^n(\Bl_0\C^2)\) is Frobenius split by \cite[Theorem 7.5.2]{BrionKumar}. Moreover from the previous proof we see that there is an open subset \(V\subset \Hilb^{n,n+1}_0(\C^2)\) that is isomorphic to a dense
  open set in \(\Hilb^n(\Bl_0\C^2)\). Since \(\Hilb^{n,n+1}_0(\C^2)\) is smooth and the complement to \(V\) is of codimension \(2\) the
  statement of the proposition follows from \cite[Lemma 1.1.7]{BrionKumar}.
\end{proof}

\begin{corollary}
\label{cor:vanishing}
  For any \(m>0, k>0\) we have \(H^i(\Hilb^{n,n+1}_0(\C^2),\CO(m,k))=0\) for all \(i>0\).
\end{corollary}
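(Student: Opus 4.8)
The plan is to deduce the vanishing from the Frobenius splitting just established, combined with a relative ampleness statement and the affineness of the Hilbert--Chow base. Recall that the nested Hilbert scheme is the incidence locus inside a product of Hilbert schemes, so there is a closed immersion
\[ \Phi=(\pi_n,\pi_{n+1}):\Hilb^{n,n+1}_0(\C^2)\hookrightarrow \Hilb^n(\C^2)\times \Hilb^{n+1}(\C^2), \]
under which $\CO(m,k)=\Phi^*(\CO(m)\boxtimes \CO(k))$. Composing $\Phi$ with the two Hilbert--Chow morphisms gives a proper morphism $\rho:\Hilb^{n,n+1}_0(\C^2)\to \Sym^n(\C^2)\times \Sym^{n+1}(\C^2)$ onto an affine variety. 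The first step is to prove that $\CO(m,k)$ is $\rho$-ample for all $m,k>0$. Here I would use the classical fact that the tautological bundle $\CO(1)=\CO(-\partial/2)$ on each factor is ample relative to Hilbert--Chow; hence $\CO(m)\boxtimes \CO(k)$ is ample relative to $hc_n\times hc_{n+1}$ for $m,k>0$ (an external product of relatively ample bundles is relatively ample over the product of bases, being fiberwise ample on a product of projective fibers). Since relative ampleness is preserved under restriction to a closed subscheme, $\CO(m,k)=\Phi^*(\CO(m)\boxtimes\CO(k))$ is $\rho$-ample.

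Granting this, the second step invokes the standard cohomology vanishing for Frobenius split varieties in its relative form. Working with a reduction modulo a large prime $p$, the splitting $\CO_X\to F_*\CO_X$ twists, via the projection formula, to a split injection $\CO(m,k)\hookrightarrow F_*\CO(m,k)^{\otimes p}$, which exhibits $R^i\rho_*\CO(m,k)$ as a direct summand of $R^i\rho_*\CO(m,k)^{\otimes p^e}$ for every $e$ (using that Frobenius is affine). Since $\CO(m,k)$ is $\rho$-ample and the base is noetherian, relative Serre vanishing gives $R^i\rho_*\CO(m,k)^{\otimes p^e}=0$ for $i>0$ and $e\gg 0$, whence $R^i\rho_*\CO(m,k)=0$ in characteristic $p$. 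As the base is affine, the Leray spectral sequence collapses and $H^i(X,\CO(m,k))=\Gamma\bigl(R^i\rho_*\CO(m,k)\bigr)=0$ for $i>0$. Finally, upper semicontinuity of cohomology in the flat family over $\Spec\Z$ transports this vanishing to the characteristic-zero fiber.

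The main obstacle is establishing relative ampleness for \emph{all} $m,k>0$ rather than merely for $m\gg 0$: a naive fiberwise analysis over $\Hilb^n(\C^2)$ only sees $\pi_{n+1}^*\CO(k)$ as relatively ample along the projective $\pi_n$-fibers, and would then require twisting by a large power of $\pi_n^*\CO(1)$ to achieve ampleness on the total space. The closed-embedding formulation above is precisely what removes this defect, packaging both positivity directions at once as the restriction of $\CO(m)\boxtimes \CO(k)$; this also explains why both strict inequalities $m,k>0$ are needed, since ampleness of the external product fails if either factor degenerates to a merely nef bundle. The only genuine external input is the relative ampleness of $\CO(1)$ over Hilbert--Chow on each factor, which I would cite as classical. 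A secondary point requiring care is the passage from characteristic $p$ back to characteristic zero; this is routine once the model over a finitely generated base is arranged so that Frobenius splitting and relative ampleness hold on a dense set of closed fibers.
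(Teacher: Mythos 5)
Your proposal is correct, but it reaches the essential ampleness input by a genuinely different route from the paper. The paper compactifies: it regards $\Hilb^{n,n+1}(\C^2)$ as an open subset of $\Hilb^{n,n+1}(\P^2)$ and quotes the ample-cone computation of \cite{Ryan}, which gives that $\CO(m,k)\otimes \pi_n^*\CO(\ell H)_n$ is ample there for $\ell\gg 0$; since the twist $\pi_n^*\CO(\ell H)_n$ restricts trivially to the open part, $\CO(m,k)$ is ample on $\Hilb^{n,n+1}(\C^2)$, hence on $\Hilb^{n,n+1}_0(\C^2)$ by Lemma \ref{lem: factorization}, and the Frobenius splitting established just before the corollary yields the vanishing via \cite{BrionKumar}. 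You instead stay affine: you embed $\Hilb^{n,n+1}_0(\C^2)$ as a closed subscheme of $\Hilb^n(\C^2)\times\Hilb^{n+1}(\C^2)$, invoke the relative ampleness of $\CO(1)$ over the Hilbert--Chow morphism (this is indeed classical --- it is Haiman's description of $\Hilb^n(\C^2)$ as a blowup/Proj over $\Sym^n(\C^2)$, available from \cite{Haiman}, already in the paper's bibliography), and then use the stability of relative ampleness under box products and restriction to closed subschemes to get $\rho$-ampleness of $\CO(m,k)$ for the proper morphism $\rho$ to the affine base $\Sym^n(\C^2)\times\Sym^{n+1}(\C^2)$. Your route buys independence from the nef-cone computation of \cite{Ryan} and makes explicit the proper-over-affine structure that the Brion--Kumar vanishing theorem actually requires; the paper's route is shorter because \cite{Ryan} is quoted wholesale. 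The Frobenius-splitting half of the two arguments is essentially identical.

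One step to tighten: your closing appeal to ``upper semicontinuity of cohomology in the flat family over $\Spec\Z$'' is not literally available, because the model $X_A\to\Spec A$ (with $A$ a finitely generated $\Z$-algebra) is not proper, and semicontinuity of fiberwise $H^i$ requires properness. The repair uses exactly the relative apparatus you already set up: spread out $\rho$ over $\Spec A$, shrink so that the coherent sheaves $R^i\rho_*\CO(m,k)$ commute with base change, deduce from the characteristic-$p$ vanishing (valid at the dense set of closed points where the reduction is split) that these sheaves vanish at the generic point, and then take global sections over the affine base. This is the mechanism codified in \cite{BrionKumar} for transporting splitting statements to characteristic zero, and it is what the paper is implicitly citing as well, so this is a fixable imprecision rather than a genuine gap.
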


\begin{proof}
    Consider the nested Hilbert scheme $\Hilb^{n,n+1}(\C^2)$ as an open subset of $\Hilb^{n,n+1}(\P^2)$. For fixed $m,k>0$, the line bundle $\CO(m,k)\otimes \pi_n^*\CO(\ell H)_n$ on $\Hilb^{n,n+1}(\P^2)$ is ample for $\ell$ sufficiently large, where $H\subseteq \P^2$ is the line at infinity \cite{Ryan}. The restriction of $\pi_n^*\CO(\ell H)_n$ to $\Hilb^{n,n+1}(\C^2)$ is trivial, so we have that $\CO(m,k)$ is ample on $\Hilb^{n,n+1}(\C^2)$ for all $m,k>0$. By Lemma \ref{lem: factorization}, $\CO(m,k)$ is ample on $\Hilb^{n,n+1}_0(\C^2)$ as well. The Frobenius splitting of $\Hilb^{n,n+1}_0(\C^2)$ therefore implies that the higher cohomology of $\CO(m,k)$ vanishes \cite{BrionKumar}, as desired.
\end{proof}


\subsection{Localization formulas}

One can use the Atiyah-Bott localization formula to compute the holomorphic Euler characteristic of $\CO(m,k)$. Recall that the natural action of $(\C^{\times})^2$ on $\C^2$ extends to $\Hilb^n(\C^2)$ and $\Hilb^{n,n+1}(\C^2)$.

The fixed points in $\Hilb^n(\C^2)$ correspond to monomial ideals $I_{\lambda}$ which are labeled by Young diagrams $\lambda$ of size $n$. The fixed points in $\Hilb^{n,n+1}(\C^2)$ correspond to nested pairs of monomial ideals, labeled by pairs of Young diagrams $\lambda\subset \mu$, $|\lambda|=n$ and $|\mu|=n+1$. We denote the box $\mu\setminus\lambda$ by $\bsq$.

The $(q,t)$-character of the cotangent space to $\Hilb^{n,n+1}(\C^2)$ at $(\lambda,\mu)$ was computed  in \cite{Can,Cheah}, which implies the following.

\begin{proposition}(\cite[Theorem 2.7]{Can})
We have
\begin{equation}
\label{eq: localization}
\chi(\Hilb^{n,n+1}(\C^2),\CO(m,k))=\sum_{\mu=\lambda\cup \bsq} \frac{\bsq^{k}\prod_{\sq\in \lambda}\sq^{k+m}}{(1-q)(1-t)P_1(\lambda,\mu)P_2(\lambda,\mu)P_3(\lambda,\mu)}
\end{equation}
where 
$$
P_1(\lambda,\mu)=\prod_{\sq\in \mu\setminus\left(\Row(\bsq)\cup\Col(\bsq)\right)}\left(1-q^{-a(\sq)}t^{1+\ell(\sq)}\right)\left(1-q^{1+a(\sq)}t^{-\ell(\sq)}\right),
$$
$$
P_2(\lambda,\mu)=\prod_{\sq\in  \Row(\bsq)}\left(1-q^{-a(\sq)}t^{1+\ell(\sq)}\right)\left(1-q^{a(\sq)}t^{-\ell(\sq)}\right)
$$
$$
P_3(\lambda,\mu)=\prod_{\sq\in  \Col(\bsq)}\left(1-q^{-a(\sq)}t^{\ell(\sq)}\right)\left(1-q^{1+a(\sq)}t^{-\ell(\sq)}\right).
$$
Here $a(\sq)$ and $\ell(\sq)$ denote the arm and the leg lengths of a box $\sq$ in the larger diagram $\mu$, and  in the numerator we identify the boxes $\sq,\bsq$ with their $(q,t)$-weights.
\end{proposition}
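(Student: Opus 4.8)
The plan is to prove the formula by the Atiyah--Bott--Berline--Vergne holomorphic Lefschetz fixed point theorem in equivariant $K$-theory. Since $\Hilb^{n,n+1}(\C^2)$ is smooth of dimension $2n+2$ and the $(\C^\times)^2$-action has isolated fixed points (the nested monomial ideals $I_\mu\subset I_\lambda$ with $\mu=\lambda\cup\bsq$), the equivariant Euler characteristic is a well-defined rational function in $q,t$ computed by
\[
\chi(\Hilb^{n,n+1}(\C^2),\CO(m,k))=\sum_{\mu=\lambda\cup\bsq}\frac{\wt_{(\lambda,\mu)}\CO(m,k)}{\prod_{w}(1-w)},
\]
where the product runs over the $(q,t)$-weights $w$ of the cotangent space $T^*_{(\lambda,\mu)}\Hilb^{n,n+1}(\C^2)$. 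All these weights are nonzero, so the fixed-point contributions make sense; although the variety is non-compact, the only non-compact directions come from the free $\C^2$ factor of Lemma \ref{lem: factorization}, whose contribution is the convergent geometric series $\tfrac{1}{(1-q)(1-t)}$, and for $m,k>0$ the vanishing of higher cohomology (Corollary \ref{cor:vanishing}) lets us read $\chi$ as the equivariant character of $H^0$. It then remains to match the numerator and the denominator term by term.

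First I would compute the numerator. The tautological bundle restricts at $I_\lambda$ to $\CT|_{I_\lambda}=\C[x,y]/I_\lambda$, which has $(q,t)$-character $\sum_{\sq\in\lambda}\sq$ under the identification of a box with its weight; hence $\CO(1)=\det\CT$ has weight $\prod_{\sq\in\lambda}\sq$ at the corresponding fixed point. Pulling back along $\pi_n$ and $\pi_{n+1}$ and using $\CO(m,k)=\pi_n^*\CO(m)\otimes\pi_{n+1}^*\CO(k)$ gives
\[
\wt_{(\lambda,\mu)}\CO(m,k)=\Bigl(\prod_{\sq\in\lambda}\sq\Bigr)^{m}\Bigl(\prod_{\sq\in\mu}\sq\Bigr)^{k}=\bsq^{\,k}\prod_{\sq\in\lambda}\sq^{\,k+m},
\]
since $\mu=\lambda\cup\bsq$. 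This is exactly the numerator of \eqref{eq: localization}.

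Next I would analyze the denominator. By the factorization of Lemma \ref{lem: factorization}, the cotangent space at $(\lambda,\mu)$ splits off the two coordinate directions of the $\C^2$ factor, whose fixed point is the origin and whose weights $q,t$ produce the factor $(1-q)(1-t)$. The remaining $2n$ cotangent weights are those of $\Hilb^{n,n+1}_0(\C^2)$, which were computed in \cite{Can,Cheah}. Grouping the boxes $\sq\in\mu$ according to their position relative to the added box $\bsq$ -- lying outside both the row and the column of $\bsq$, in its row, or in its column -- collects these $2n$ factors $(1-w)$ into the three products $P_1$, $P_2$, and $P_3$ respectively. Assembling the numerator and the four factors of the denominator then yields \eqref{eq: localization}.

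The hard part will be the cotangent-space character at the nested fixed points, which is the genuine content supplied by \cite{Can,Cheah}. It is obtained from the deformation theory of a nested pair $\CJ\subset\CI$: the Zariski tangent space is the space of compatible pairs of first-order deformations inside $\mathrm{Hom}(\CI,\CO/\CI)\oplus\mathrm{Hom}(\CJ,\CO/\CJ)$, and extracting its $(q,t)$-character at a monomial fixed point is a delicate $\mathrm{Ext}$ computation. The subtlety -- and the reason $P_2$ and $P_3$ carry shifted powers of $q$ and $t$ compared with the generic factor $P_1$ -- is that the arm and leg lengths of boxes sharing a row or column with $\bsq$ change when passing from $\lambda$ to $\mu=\lambda\cup\bsq$, so those boxes contribute tangent weights differing from the standard $\Hilb^n(\C^2)$ weights. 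Once this character is in hand, the matching described above is routine bookkeeping, and the identity holds as a formal equality of rational functions for all $m,k\in\Z$.
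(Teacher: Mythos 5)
Your proposal takes essentially the same route as the paper, which offers no independent proof but attributes the formula to \cite[Theorem 2.7]{Can}, noting only that the $(q,t)$-character of the cotangent space at the nested fixed points was computed in \cite{Can,Cheah} --- i.e., exactly the localization argument you outline, with the same division of labor: routine numerator computation, the $(1-q)(1-t)$ factor from the $\C^2$ direction, and the genuinely hard cotangent character (the products $P_1,P_2,P_3$) supplied by the citation. One small imprecision: the non-compactness of $\Hilb^{n,n+1}(\C^2)$ is not confined to the $\C^2$ factor of Lemma \ref{lem: factorization}, since $\Hilb^{n,n+1}_0(\C^2)$ is itself non-compact; localization nevertheless applies because the torus fixed locus is finite and every $(q,t)$-weight space of the cohomology is finite-dimensional, which is the standard justification in this setting.
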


\begin{corollary}
\label{cor: localization}
For $k,m>0$ the bigraded character of $H^0(\Hilb^{n,n+1}(\C^2),\CO(m,k))$  is given by \eqref{eq: localization}.
\end{corollary}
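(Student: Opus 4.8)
The plan is to deduce the Corollary from the preceding Proposition together with the vanishing already in hand. By that Proposition the right-hand side of \eqref{eq: localization} equals the equivariant holomorphic Euler characteristic
\[
\chi\big(\Hilb^{n,n+1}(\C^2),\CO(m,k)\big)=\sum_{i\ge 0}(-1)^i\,\mathrm{ch}_{q,t}\,H^i\big(\Hilb^{n,n+1}(\C^2),\CO(m,k)\big),
\]
where $\mathrm{ch}_{q,t}$ denotes the bigraded character for the $(\C^\times)^2$-action. Since each bigraded piece of $H^0$ is finite-dimensional, it therefore suffices to prove that $H^i\big(\Hilb^{n,n+1}(\C^2),\CO(m,k)\big)=0$ for all $i>0$ when $m,k>0$; the alternating sum then collapses to $\mathrm{ch}_{q,t}\,H^0$, which is exactly the assertion.

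To establish this vanishing I reduce to the centered nested Hilbert scheme. By Lemma \ref{lem: factorization} the re-centering map $(Z,Z')\mapsto(\pi(Z,Z'),\tau_{-\pi(Z,Z')}(Z,Z'))$ gives an isomorphism $\Hilb^{n,n+1}(\C^2)\simeq \C^2\times \Hilb^{n,n+1}_0(\C^2)$. One checks it is $(\C^\times)^2$-equivariant, the scaling action going to the product of the standard scaling on $\C^2$ and the induced scaling on $\Hilb^{n,n+1}_0(\C^2)$; this uses the identity $s\circ\tau_{-v}=\tau_{-sv}\circ s$ of scaling and translation operators, which makes re-centering scaling-equivariant. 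The line bundle $\CO(m,k)$ is canonically translation-equivariant, being built from determinants of the tautological bundles, and the additive group $\C^2$ admits no nontrivial characters; hence $\CO(m,k)$ descends along the free translation action and is identified, under the factorization, with the pullback $p_2^*\mathcal{M}$ of its restriction $\mathcal{M}=\CO(m,k)|_{\Hilb^{n,n+1}_0(\C^2)}$ along the second projection $p_2$.

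With this product description the K\"unneth formula applies. Since $\C^2$ is affine, $H^i(\C^2,\CO)=0$ for $i>0$ and $H^0(\C^2,\CO)=\C[x,y]$ has bigraded character $\tfrac{1}{(1-q)(1-t)}$, so
\[
H^i\big(\Hilb^{n,n+1}(\C^2),\CO(m,k)\big)\cong H^i\big(\C^2\times\Hilb^{n,n+1}_0(\C^2),\,p_2^*\mathcal{M}\big)\cong \C[x,y]\otimes H^i\big(\Hilb^{n,n+1}_0(\C^2),\mathcal{M}\big)
\]
as bigraded vector spaces. By Corollary \ref{cor:vanishing} the factor $H^i(\Hilb^{n,n+1}_0(\C^2),\mathcal{M})$ vanishes for $i>0$ when $m,k>0$, giving the required vanishing upstairs and hence $\mathrm{ch}_{q,t}\,H^0=\chi=\eqref{eq: localization}$. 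The leftover $\C[x,y]$ factor accounts for the prefactor $\tfrac{1}{(1-q)(1-t)}$ in the formula. The one point demanding genuine care is the equivariant compatibility in the middle step: I must make sure the translation-equivariant identification $\CO(m,k)\cong p_2^*\mathcal{M}$ respects the scaling grading, so that K\"unneth transfers not only the vanishing but also the correct bigraded character. Once that is checked, affineness of $\C^2$, K\"unneth, and Corollary \ref{cor:vanishing} finish the proof immediately.
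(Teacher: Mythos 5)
Your proposal is correct and follows essentially the same route as the paper: the paper's proof is the one-line observation that the higher cohomology vanishes by Corollary \ref{cor:vanishing}, so the Euler characteristic in \eqref{eq: localization} collapses to the character of $H^0$. You additionally spell out the step the paper leaves implicit---transferring the vanishing from $\Hilb^{n,n+1}_0(\C^2)$ to $\Hilb^{n,n+1}(\C^2)$ via Lemma \ref{lem: factorization} and K\"unneth over the affine factor $\C^2$---and note that only the (non-equivariant) vanishing is actually needed there, since the character identity is already stated on the full nested Hilbert scheme.
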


\begin{proof}
For $k,m>0$   the higher homology vanish by Corollary \ref{cor:vanishing}, so the Euler characteristic agrees with the character of $H^0$.
\end{proof}

\begin{example}
\label{ex: n=2 localization}
For $n=2$ we have
\begin{multline*}
\chi(\Hilb^{n,n+1}(\C^2),\CO(m,k))=\frac{1}{(1-q)(1-t)}\times\\
\Biggl[\frac{q^{m+3k}}{(1-q^{-2}t)(1-q^2)(1-q^{-1}t)(1-q)}+\frac{t^{m+3k}}{(1-qt^{-2})(1-t^2)(1-qt^{-1})(1-t)}+\\
\frac{q^{k}t^{m+k}}{(1-q)(1-t)(1-q^{-1}t^2)(1-qt^{-1})}+\frac{q^{m+k}t^k}{(1-q)(1-t)(1-q^{-1}t)(1-q^2t^{-1})}\Biggr]
\end{multline*}
\end{example}

\section{Global Sections}
\label{sec: global sections}

We start with some definitions.  Let $\sgn(m)=\sgn^{\otimes m}$ denote the one-dimensional representation of $S_n$ which is trivial for $m$ even and sign for $m$ odd. Similarly, if $V$ is a representation of $S_n$ then the $\sgn(m)$ component of $V$ is $V^{S_n}$ if $m$ is even and $V^{\sgn}$ if $m$ is odd. Consider the diagonal action of $S_n$ on $\C[x_1,\ldots,x_n,y_1,\ldots,y_n]$ which permutes $x_i, y_i$ simultaneously. We will also consider the action of $S_n$ on $\C[x_1,\ldots,x_n,y_1,\ldots,y_n,x,y]$ which  permutes $x_i, y_i$ simultaneously and fixes the variables $x$ and $y$. We define the bigrading on these polynomial rings by $\deg(x_i)=\deg(x)=q$ and $\deg(y_i)=\deg(y)=t$, and note that the action of $S_n$ preserves the grading. We will simply refer to $\deg$ as to degree.

\begin{definition}
Let $A\subset \C[x_1,\ldots,x_n,y_1,\ldots,y_n]$ be the subspace of antisymmetric polynomials. We define $J$ as the ideal generated by $A$.
\end{definition}

Note that the space of antisymmetric polynomials in 
$\C[x_1,\ldots,x_n,y_1,\ldots,y_n,x,y]$ with respect to the above action is $A[x,y]$.

\begin{definition}
\label{def: delta S}
Let $S\subseteq \Z^2_{\ge 0}$ be an $n$-element subset. We will always assume without loss of generality that the elements of $S$ are labeled in increasing lexicographic order: $S=\{(a_1,b_1),\ldots,(a_n,b_n)\}$, and define $\Delta_{S}=\det(x_i^{a_j}y_i^{b_j})$. 
\end{definition}

Note that a different choice of ordering for elements of $S$ yields the same $\Delta_S$ up to sign. One can think of $\Delta_S$ as antisymmetrization of a monomial $x_1^{a_1}y_1^{b_1}\cdots x_n^{a_n}y_n^{b_n}$, which implies that $\Delta_S$ form a homogeneous basis of $A$. 

\begin{definition}
We define the integer powers of $A$ and $J$ as follows. If $m\le 0$ then
$
J^m=\C[x_1,\ldots,x_n,y_1,\ldots,y_n]
$
and $A^m$ is the $\sgn(m)$ component of $J^m$. If $m>0$ then $J^m$ (resp. $A^m$) is the span of products of $m$-tuples of elements of $J$ (resp. A).
\end{definition}

In particular, $A^0=\C[x_1,\ldots,x_n,y_1,\ldots,y_n]^{S_n}$. The following result of Haiman (see also \cite[Lemma 3.12]{C}) describes some important properties of $J^m$ and $A^m$.

\begin{theorem}\cite{Haiman,Haimanqt}\label{thm:secC2HS}
For all $m\in \Z$ one has
$$
H^0(\Hilb^n(\C^2),\CO(m))=A^m.
$$
Furthermore, we have  
$$
J^m=\bigcap_{i<j}(x_i-x_j,y_i-y_j)^m
$$
and the $\sgn(m)$ component of $J^m$ coincides with $A^m$. 
\end{theorem}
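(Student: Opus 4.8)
The plan is to deduce all three assertions from Haiman's foundational work, organizing the argument around the ideal $J$ and its geometry rather than around $\Hilb^n(\C^2)$ directly. Write $R=\C[x_1,\dots,x_n,y_1,\dots,y_n]$. The first, elementary, observation is the easy half of the middle identity: each $\Delta_S$ changes sign under the transposition $(ij)$, hence vanishes on the locus $\{x_i=x_j,\ y_i=y_j\}$, and since $(x_i-x_j,y_i-y_j)$ is prime this gives $A\subseteq (x_i-x_j,y_i-y_j)$. As $J=(A)$, we get $J\subseteq (x_i-x_j,y_i-y_j)$ and therefore $J^m\subseteq\bigcap_{i<j}(x_i-x_j,y_i-y_j)^m$ for every $m\ge 0$. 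The substance of the theorem is the reverse inclusion, together with the identification of global sections. Because each $(x_i-x_j,y_i-y_j)$ is generated by a regular sequence in the regular ring $R$, its powers are integrally closed and the right-hand side $\bigcap_{i<j}(x_i-x_j,y_i-y_j)^m$ is exactly the integral closure $\overline{J^m}$ (the diagonals $\{x_i=x_j,y_i=y_j\}$ furnish the relevant Rees valuations of $J$). Thus the identity $J^m=\bigcap_{i<j}(x_i-x_j,y_i-y_j)^m$ is equivalent to the statement that every power $J^m$ is integrally closed, i.e. that $J$ is a normal ideal.

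Next I would set up the geometric model that supplies this normality. Let $X_n=\mathrm{Proj}\,\bigoplus_{d\ge 0}J^d$ be the blow-up of $(\C^2)^n=\Spec R$ along $J$; it carries the diagonal $S_n$-action, the Serre line bundle $\CO_{X_n}(1)$, and a projection $\rho\colon X_n\to \Hilb^n(\C^2)$ exhibiting $\Hilb^n(\C^2)$ as the quotient $X_n/S_n$. The deep input — Haiman's theorem that the isospectral Hilbert scheme $X_n$ is normal and Cohen--Macaulay — furnishes three consequences. First, normality of $X_n$ means the Rees algebra $\bigoplus_d J^d$ is integrally closed, so each $J^m$ is integrally closed; combined with the previous paragraph this proves the middle identity. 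Second, Cohen--Macaulayness together with standard blow-up cohomology vanishing gives $H^0(X_n,\CO_{X_n}(m))=J^m$ for $m\ge 0$ with all higher cohomology zero. Third, the tautological bundle descends as an isotypic piece: $\rho_*\CO_{X_n}(m)$ has $\sgn(m)$-isotypic part equal to $\CO(m)$ on $\Hilb^n(\C^2)$. This last point is transparent on the open locus where $\CI$ is reduced, where $\Hilb^n(\C^2)=\big((\C^2)^n\smallsetminus\text{diagonals}\big)/S_n$, the fibre of $\wedge^n\CT$ over $\{P_1,\dots,P_n\}$ is $\wedge^n\!\big(\bigoplus_i\C_{P_i}\big)$, and a section is an $S_n$-alternating function of the ordered tuple — literally an alternant; the role of normality is to propagate this identification across the nonreduced locus.

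Given these inputs, the remaining assertions follow by taking $S_n$-isotypic components. Passing to global sections and extracting the $\sgn(m)$-isotypic part yields, for $m\ge 0$,
\[ H^0\big(\Hilb^n(\C^2),\CO(m)\big)=\big(\text{the }\sgn(m)\text{-component of }J^m\big). \]
To finish I would identify this component with $A^m$: every product of $m$ alternants lies in $J^m$ and transforms by $\sgn^m=\sgn(m)$, giving $A^m\subseteq(\sgn(m)$-part of $J^m)$, while the reverse containment — that the alternants generate the sign-isotypic part of the coordinate ring — is visible on the reduced locus and globalized by the normality of $X_n$. This establishes the last sentence of the theorem and, simultaneously, the section formula for $m\ge 0$. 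For $m<0$ one argues by the same isotypic descent, now with $J^m=R$ by convention and its $\sgn(m)$-component equal to $A^m$ by definition, matching $H^0(\CO(m))$ via the descent of $\rho_*\CO_{X_n}(m)$; the values $m,k\ge 0$ are in any case the only ones used in the sequel.

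The single genuine obstacle is the normality and Cohen--Macaulayness of $X_n$: once granted, the integral-closure identity, the cohomology vanishing, and the isotypic descent are essentially formal. That input is precisely Haiman's main theorem, proved via the polygraph freeness result and equivalent to the resolution of the $n!$ conjecture, and I would cite it rather than reprove it. Everything else in the argument is bookkeeping around this fact.
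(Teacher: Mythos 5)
Since the paper offers no proof of this theorem---it is quoted from Haiman (with a pointer to \cite[Lemma 3.12]{C})---your attempt must be judged as a reconstruction of Haiman's argument. The overall framework (the isospectral Hilbert scheme $X_n$ as a blow-up, descent of isotypic components, the easy inclusion $J^m\subseteq\bigcap_{i<j}(x_i-x_j,y_i-y_j)^m$) is faithful to Haiman, but there is a genuine gap at the central step. The pivot of your argument is the implication ``normality of $X_n$ $\Rightarrow$ the Rees algebra $\bigoplus_d J^d$ is integrally closed $\Rightarrow$ each $J^m$ is integrally closed,'' together with the parallel claim that Cohen--Macaulayness plus standard blow-up cohomology gives $H^0(X_n,\CO_{X_n}(m))=J^m$. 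This is not valid: normality of $\mathrm{Proj}$ of a graded ring does not imply the graded ring itself is integrally closed. What normality of the blow-up $X_n=\Bl_J(\C^2)^n$ actually yields is $H^0(X_n,\CO_{X_n}(m))=\overline{J^m}$ (the integral closure) for all $m\ge 0$, while the equality $J^m=\overline{J^m}$ follows only for $m\gg 0$ (by finiteness of graded local cohomology). A concrete counterexample to your implication: $I=(x^4,x^3y,xy^3,y^4)\subset\C[x,y]$ satisfies $I^d=(x,y)^{4d}$ for all $d\ge 2$, so $\Bl_I\C^2=\Bl_{(x,y)}\C^2$ is smooth, and yet $\overline{I}=(x,y)^4\ne I$ and $H^0\bigl(\Bl_I\C^2,\CO(1)\bigr)=(x,y)^4\supsetneq I$. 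Thus your assertion $H^0(X_n,\CO_{X_n}(m))=J^m$ is exactly the statement to be proved (that $J^m$ is integrally closed, equivalently the displayed ideal identity), not a formal consequence of $X_n$ being normal and Cohen--Macaulay. In Haiman's papers this is precisely where the polygraph theorem (freeness of the polygraph coordinate ring over $\C[\mathbf{y}]$) enters as an independent, essential input; it is not merely a tool used to prove normality of $X_n$, and your outline never invokes it at the point where it is needed. Already the case $m=1$---that the radical ideal $\bigcap_{i<j}(x_i-x_j,y_i-y_j)$ of the union of the pairwise diagonals is generated by alternating polynomials---is not formal.

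Two secondary points. First, your identification of the Rees valuations of $J$ with the order valuations along the diagonals is asserted only parenthetically; one must rule out exceptional divisors of the normalized blow-up lying over deeper diagonal strata (this can be extracted from the finiteness of $X_n\to\Hilb^n(\C^2)$ together with the irreducibility of the boundary divisor $\partial$, but it needs to be said). Second, in the descent step the natural $S_n$-equivariant structure on $\CO_{X_n}(m)=J^m\CO_{X_n}$ differs from the pullback structure on $\rho^*\CO(m)$ by the character $\sgn^{\otimes m}$; this twist is exactly why the $\sgn(m)$-isotypic component, rather than the invariants, computes $H^0(\Hilb^n(\C^2),\CO(m))$, and without tracking it your bookkeeping for odd $m$ is inconsistent (it would produce $\CO(m+1)$ rather than $\CO(m)$).
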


This description of global sections of line bundles on $\Hilb^n(\C^2)$ can be generalized to $\Hilb^n(X)$ for any smooth toric surface $X$. This was done in \cite{C} in the case $X$ is projective, but we state here the more general result and give a self-contained proof. We will then transport the description of sections of line bundles on $\Hilb^n(\Bl_0\C^2)$ the nested Hilbert scheme using Corollary \ref{cor:birationalequiv}. 

The generalization of Theorem \ref{thm:secC2HS} to toric surfaces $X$ with $X\subseteq \C^2$ is easily obtained by localization.

\begin{corollary}\label{cor:SectionsOverOpens}
    Let $\C^*\times \C\subseteq \C^2$ be the open set defined by $x\neq 0$ and $(\C^*)^2\subseteq \C^2$ the open set defined by $xy\neq 0$. We have open subsets $\Hilb^n((\C^*)^2)\subseteq \Hilb^n(\C^*\times \C)\subseteq \Hilb^n(\C^2)$, and the sections of $\mathcal{O}(m)$ over these open subsets are given by the localizations
    \[ H^0(\Hilb^n(\C^*\times \C),\mathcal{O}(m)) = (A^m)_{x_1\cdots x_n}\subseteq \C[x_1^{\pm1},\dots,x_n^{\pm1},y_1,\dots,y_n],\]
    and 
    \[ H^0(\Hilb^n((\C^*)^2),\mathcal{O}(m)) = (A^m)_{x_1\cdots x_n y_1\cdots y_n}\subseteq \C[x_1^{\pm1},\dots,x_n^{\pm1},y_1^{\pm1},\dots,y_n^{\pm1}].\]
    Moreover, each of these localizations is given by the $\sgn(m)$ components of the ideal $\bigcap_{i<j}(x_i-x_j,y_i-y_j)^m$ considered as an ideal in the corresponding Laurent polynomial ring.
\end{corollary}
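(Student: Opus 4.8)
The plan is to prove Corollary~\ref{cor:SectionsOverOpens} by combining the global statement of Theorem~\ref{thm:secC2HS} with the fact that taking global sections of $\CO(m)$ commutes with restriction to the torus-invariant opens in question. First I would note that the inclusions $(\C^*)^2 \subseteq \C^*\times\C \subseteq \C^2$ are open immersions of the base surface, and that taking Hilbert schemes of points is functorial for open immersions: an open immersion $V \hookrightarrow S$ induces an open immersion $\Hilb^n(V) \hookrightarrow \Hilb^n(S)$, realizing $\Hilb^n(V)$ as the locus of subschemes supported in $V$. Under these inclusions the tautological line bundle $\CO(m)$ pulls back to the corresponding tautological bundle, so the three spaces of sections are linked by honest restriction maps.

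The geometric heart of the argument is to identify each restriction with an algebraic localization. The complement of $\Hilb^n(\C^*\times\C)$ inside $\Hilb^n(\C^2)$ is the locus where at least one support point lies on $\{x=0\}$; on the side of functions, the symmetric product $\prod_i x_i$ is (up to the antisymmetric framing that makes it a section of the appropriate bundle) exactly the function cutting out this locus. So inverting $x_1\cdots x_n$ is the algebraic operation corresponding to passing to the open set where no point meets $\{x=0\}$. Concretely, I would argue that restriction of sections along $\Hilb^n(\C^*\times\C)\subseteq \Hilb^n(\C^2)$ is precisely localization at $x_1\cdots x_n$: every section over the open set extends, after clearing a power of $x_1\cdots x_n$, to a section over $\Hilb^n(\C^2)$, since the boundary divisor is the vanishing of $x_1\cdots x_n$ and the line bundle is locally free. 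This gives $H^0(\Hilb^n(\C^*\times\C),\CO(m)) = (A^m)_{x_1\cdots x_n}$ using $H^0(\Hilb^n(\C^2),\CO(m))=A^m$ from Theorem~\ref{thm:secC2HS}. The $(\C^*)^2$ case follows identically by inverting $y_1\cdots y_n$ as well.

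For the final assertion about the $\sgn(m)$ components of the intersection ideal, I would invoke the second half of Theorem~\ref{thm:secC2HS}, namely that the $\sgn(m)$ component of $\bigcap_{i<j}(x_i-x_j,y_i-y_j)^m$ equals $A^m$, and observe that localization is exact and commutes both with taking $S_n$-isotypic components (since $S_n$ is finite and the localizing element $x_1\cdots x_n$, resp.\ $x_1\cdots x_n y_1\cdots y_n$, is $S_n$-invariant) and with taking intersections of ideals (again because the localizing element is invariant, one can move localization past the finite intersection). Thus the localized intersection ideal $\bigl(\bigcap_{i<j}(x_i-x_j,y_i-y_j)^m\bigr)_{x_1\cdots x_n}$, interpreted inside the Laurent ring, has $\sgn(m)$ component equal to $(A^m)_{x_1\cdots x_n}$, matching the sections computed above.

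The main obstacle I anticipate is justifying cleanly that restriction of sections equals localization, i.e.\ that a section defined over the torus open set acquires at worst a pole along the boundary divisor of order controlled by a power of $x_1\cdots x_n$. This requires knowing that the boundary $\Hilb^n(\C^2)\setminus \Hilb^n(\C^*\times\C)$ is a (reduced or suitably scheme-theoretic) divisor cut out by $x_1\cdots x_n$ in the relevant sense, and that $\CO(m)$ restricts compatibly; the subtlety is that $x_1\cdots x_n$ is not literally a global function on $\Hilb^n(\C^2)$ but a section of a line bundle, so I would phrase the localization statement at the level of the $\Z$-graded (or $S_n$-equivariant) section rings rather than naively inverting a function. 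Once this identification is set up correctly, commuting localization with isotypic decomposition and with the ideal intersection is formal, since $S_n$ is finite and the localizing multiplicative set is $S_n$-stable.
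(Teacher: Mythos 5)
Your proposal is correct and is essentially the paper's argument: the paper gives no written proof of this corollary, saying only that it is ``easily obtained by localization,'' and localization of sections over torus-invariant opens is exactly what you do; the final step (commuting localization with $S_n$-isotypic components and with the finite intersection of ideals, using that the inverted element is $S_n$-invariant) is also the intended formal argument. One correction, which removes the only obstacle you flag: $x_1\cdots x_n$ \emph{is} literally a global function on $\Hilb^n(\C^2)$. It is a symmetric polynomial, and since the Hilbert--Chow morphism $\Hilb^n(\C^2)\to\Sym^n(\C^2)$ is proper and birational with normal affine target, $H^0(\Hilb^n(\C^2),\CO)=\C[x_1,y_1,\dots,x_n,y_n]^{S_n}$; in the paper's notation this is just the statement $A^0=\C[x_1,\dots,y_n]^{S_n}$. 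Thus $\Hilb^n(\C^*\times\C)$ is precisely the non-vanishing locus of the global function $e_n=x_1\cdots x_n$ (and $\Hilb^n((\C^*)^2)$ that of $x_1\cdots x_ny_1\cdots y_n$), and the standard fact that $\Gamma(X_f,\mathcal{F})\cong\Gamma(X,\mathcal{F})_f$ for a quasi-coherent sheaf on a quasi-compact, quasi-separated scheme applies verbatim---no divisor/pole analysis and no graded or equivariant reformulation of the section ring is needed. With that substitution your argument is complete and rigorous.
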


We define $\widetilde{A^m}=(A^m)_{x_1\cdots x_n y_1\cdots y_n}$ to be the space of sections of $\mathcal{O}(m)$ on $\Hilb^n((\C^*)^2).$

Now we use this local description to establish the generalization of Theorem \ref{thm:secC2HS} to arbitrary smooth toric $X$. For background on toric varieties, see \cite{Fulton}.

\begin{theorem}
    Let $D$ be a torus-invariant divisor on a smooth toric surface $X$ with corresponding polygon $P_D\subseteq \R^2$. The global sections $H^0(\Hilb^n(X),\mathcal{O}(D)_n\otimes \mathcal{O}(m))$ are the elements $f\in\widetilde{A^m}$ such that $f$, considered as a Laurent polynomial in each pair $x_i$ and $y_i$, is supported on $P_D$.
\end{theorem}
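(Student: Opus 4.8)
The plan is to reduce everything to the dense torus $\Hilb^n((\C^*)^2)$ and then control extension to $\Hilb^n(X)$ chart by chart. Since $\Hilb^n(X)$ is smooth, hence normal and integral, restriction to the dense open subset $\Hilb^n((\C^*)^2)$ embeds $H^0(\Hilb^n(X),\CO(D)_n\otimes\CO(m))$ into $\widetilde{A^m}$; over the torus $\CO(D)_n$ is canonically trivial, so this is the identification that makes sense of the ``support'' condition. The task is thus to characterize which $f\in\widetilde{A^m}$ extend to honest sections over all of $\Hilb^n(X)$.

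I would cover $X$ by its torus-invariant affine charts $U_\sigma$, one for each maximal cone $\sigma$ of the fan; by smoothness each $U_\sigma$ is a smooth affine toric surface, isomorphic to $\C^2$, $\C^*\times\C$, or $(\C^*)^2$ after choosing a $\Z$-basis of $\sigma^\vee$. On $\Hilb^n(U_\sigma)$, Theorem \ref{thm:secC2HS} (or Corollary \ref{cor:SectionsOverOpens} for the non-$\C^2$ charts) identifies $H^0(\Hilb^n(U_\sigma),\CO(m))$ with $A^m$ written in the chart coordinates $s_i,t_i$, i.e. those $f\in\widetilde{A^m}$ whose every monomial has each single-particle exponent $(\alpha_i,\beta_i)$ lying in $\sigma^\vee$. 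To bring in $\CO(D)_n$, note that on $U_\sigma$ the divisor $D$ is principal: $\CO(D)|_{U_\sigma}$ is trivialized by the character $\chi^{w_\sigma}$, where $w_\sigma\in M$ is the vertex of $P_D$ dual to $\sigma$, determined by $\langle w_\sigma,v_\rho\rangle=-a_\rho$ for the rays $\rho\in\sigma$. The induced (symmetrized, determinant) trivialization of $\CO(D)_n$ over the torus is the symmetric factor $\prod_i\chi^{w_\sigma}(x_i,y_i)$, so in the torus trivialization a section of $\CO(m)\otimes\CO(D)_n$ over $\Hilb^n(U_\sigma)$ is exactly an element of $\widetilde{A^m}$ whose single-particle exponents all lie in $w_\sigma+\sigma^\vee$.

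Next I would assemble the local conditions. The key point is that it suffices to test extendability over the single-chart loci $\Hilb^n(U_\sigma)$, even though these do not cover $\Hilb^n(X)$: a subscheme lies in $\Hilb^n(U_\sigma)$ for some $\sigma$ exactly when its support avoids $X\setminus U_\sigma$ for some $\sigma$, and the only subschemes failing this for every $\sigma$ are those whose support meets two torus-invariant divisors $D_\rho,D_{\rho'}$ with $\rho,\rho'$ lying in no common maximal cone. Since for a smooth toric surface such $D_\rho,D_{\rho'}$ are disjoint, this forces two distinct constrained support points, so the uncovered locus has codimension at least $2$. By normality of $\Hilb^n(X)$, a section of a line bundle extends over $\Hilb^n(X)$ if and only if it extends over $\bigcup_\sigma\Hilb^n(U_\sigma)$, and by the sheaf axiom (all restrictions being compared inside $\widetilde{A^m}$) this means $f$ must satisfy the per-chart support condition for every $\sigma$ simultaneously. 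Hence $H^0(\Hilb^n(X),\CO(D)_n\otimes\CO(m))$ consists of those $f\in\widetilde{A^m}$ whose single-particle exponents lie in $\bigcap_\sigma(w_\sigma+\sigma^\vee)$, and this intersection over all maximal cones equals the polygon $P_D=\{u:\langle u,v_\rho\rangle\geq -a_\rho \text{ for all } \rho\}$, giving the claimed support condition on $P_D$.

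The main obstacle I anticipate is exactly this codimension-$2$ covering step: the charts $\Hilb^n(U_\sigma)$ genuinely fail to cover $\Hilb^n(X)$, and one must verify that the uncovered locus (subschemes spread across non-adjacent boundary divisors) has codimension at least $2$ so that the normality/Hartogs principle applies and no conditions beyond the per-chart support constraints survive. One should also take mild care that the per-chart identification of $\CO(D)_n$ with the symmetrized character trivialization is consistent across charts and matches the canonical torus trivialization used to define $\widetilde{A^m}$; once this bookkeeping is settled, the remaining equality $\bigcap_\sigma(w_\sigma+\sigma^\vee)=P_D$ is simply the standard toric description of the sections of $\CO(D)$ on $X$, applied particle by particle.
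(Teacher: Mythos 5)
Your proposal follows the same overall strategy as the paper's proof: restrict to $\Hilb^n(T)$, where the bundle trivializes and sections embed into $\widetilde{A^m}$; impose per-chart support conditions via Corollary \ref{cor:SectionsOverOpens}; and conclude by a normality/Hartogs argument that the global sections are the intersection of the chart conditions. The genuine difference is the choice of charts. You use the maximal-cone charts $U_\sigma$, trivializing $\CO(D)$ by the local character $\chi^{w_\sigma}$, so each chart contributes the condition that single-particle exponents lie in $w_\sigma+\sigma^\vee$ (one condition per ``vertex''); the price is that $\bigcup_\sigma \Hilb^n(U_\sigma)$ misses a locus which you must show has codimension at least $2$. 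The paper instead uses one chart $U_j\simeq \C^*\times\C$ per boundary divisor $D_j$, obtained by deleting all the \emph{other} divisors from $X$; each chart contributes a single half-plane condition on the exponents (one condition per facet of $P_D$), and the covering statement is immediate: the complement of $\Hilb^n(T)$ is the union of the divisors of ideals meeting some $D_j$, and each such divisor meets $\Hilb^n(U_j)$, so the charts cover in codimension $1$. The paper's decomposition thus buys a one-line covering argument and avoids $\C^2$-charts entirely, while yours is the more standard toric picture at the cost of the codimension estimate.

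That estimate is where your write-up has an actual error, though a repairable one. It is not true that a subscheme lying in no $\Hilb^n(U_\sigma)$ must meet two divisors $D_\rho,D_{\rho'}$ whose rays span no common cone: in $X=\P^2$ the three rays pairwise span maximal cones, yet a subscheme whose support meets all three coordinate lines (say, one point at a torus-fixed point and another on the opposite line) lies in no $\Hilb^n(U_\sigma)$. The fix is to argue directly that any uncovered subscheme has at least two distinct boundary support points: in a smooth toric surface a point lies on at most two boundary divisors, and when it lies on two they are adjacent (their rays span a cone of the fan), so a subscheme with only one boundary support point always lies in some $U_\sigma$. Two distinct support points constrained to the boundary give codimension at least $2$, which restores your Hartogs step. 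A second, minor, inaccuracy: $w_\sigma$ is the vertex of $P_D$ dual to $\sigma$ only when $D$ is nef; for general torus-invariant $D$ what you actually use (and what is true) is that $w_\sigma$ is the unique character matching the coefficients of $D$ on the rays of $\sigma$, and $\bigcap_\sigma\left(w_\sigma+\sigma^\vee\right)=P_D$ holds because every ray of the fan belongs to some maximal cone.
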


\begin{proof}
    Let $T=(\C^*)^2\subseteq X$ denote the open torus. Since $\mathcal{O}(D)$ restricts trivially to $T$, we also have that the restriction of $\mathcal{O}(D)_n$ from $\Hilb^n(X)$ to $\Hilb^n(T)$ is trivial. By Corollary \ref{cor:SectionsOverOpens}, the sections of $\mathcal{O}(m)\otimes \mathcal{O}(D)_n$ over $\Hilb^n(T)$ are therefore identified with $\widetilde{A^m}$. 

    Now we will again use Corollary \ref{cor:SectionsOverOpens} to determine which of these extend to global sections. Write $D = \sum c_j D_j$ where the $D_j\subseteq X$ are torus-invariant curves. Each $D_j$ corresponds to a ray in the fan corresponding to $X$, and we let $(\alpha_j,\beta_j)$ denote the corresponding ray generator. We have $\C^*\times \C\simeq U_j\subseteq X$, where $U_j$ denotes the open subset obtained by removing all $D_k$ for $k\neq j$ from $X$. The restriction of $\mathcal{O}(D)$ to $U_j$ coincides with that of $\mathcal{O}(c_jD_j)$, and the curve $D_j\cap U_j\subseteq U_j$ is defined by $x^{\gamma_j}y^{\delta_j}=0$ where $(\gamma_j,\delta_j)$ is any integer point such that $\alpha_j \gamma_j +\beta_j\delta_j=1$. The sections of $\mathcal{O}$ over $U_j$ are the Laurent polynomials $f(x,y)$ such that every term $x^ay^b$ appearing in $f$ satisfies $\alpha_j a+\beta_j b \geq 0$. The sections of $\mathcal{O}(c_jD_j)$ over $U_j$ are obtained by multiplying the sections of $\mathcal{O}$ by $(x^{\gamma_j}y^{\delta_j})^{c_j}$, after which the support constraint becomes $\alpha_j a+\beta_j b \geq c_j$.

    After a change of coordinates, Corollary \ref{cor:SectionsOverOpens} says that the sections of $ \mathcal{O}(m)$ on $\Hilb^n(T)$ that extend to $\Hilb^n(U_j)$ are those elements $f\in \widetilde{A^m}$ such that in each pair of variables $x_i$, $y_i$, any term $x_i^a y_i^b$ that appears in $f$ satisfies $\alpha_j a+\beta_j b\geq 0$. Sections of $\mathcal{O}(c_iD_i)_n\otimes \mathcal{O}(m)$ over $\Hilb^n(U_j)$ are obtained by multiplying these by the local equation $(x_1^{\gamma_j}\cdots x_n^{\gamma_j}y_1^{\delta_j}\cdots y_n^{\delta_j})^{c_j}$, after which the support constraint becomes $\alpha_j a+\beta_j b \geq c_j$ for the exponents appearing on each set of variables.

    Consider the restriction of $\mathcal{O}(D)_n\otimes \mathcal{O}(m)$ to $\Hilb^n(U_j)$. This line bundle coincides with the restriction of $\mathcal{O}(c_jD_j)_n\otimes \mathcal{O}(m)$. Therefore, the sections of the restriction of $\mathcal{O}(D)_n\otimes \mathcal{O}(m)$ over $\Hilb^n(T)$ that extend to $\Hilb^n(U_j)$ are the elements $f\in \widetilde{A^m}$ that satisfy the support constraint corresponding to $\mathcal{O}(c_jD_j)$ in each pair of variables $x_i$, $y_i$.

    Finally, we observe that the open sets $\Hilb^n(U_j)$ cover $\Hilb^n(X)$ in codimension $1$. Indeed, the complement of the open set $\Hilb^n(T)\subseteq \Hilb^n(X)$ is the union of the divisors consisting of ideals $\mathcal{I}$ meeting one of the curves $D_j\subseteq X$, and each of these divisors meets $\Hilb^n(U_j)$. Thus the global sections of $\mathcal{O}(D)_n\otimes \mathcal{O}(m)$ are the same as those sections over $\bigcup \Hilb^n(U_j)$, which we have shown above is as claimed.
\end{proof}

In particular, we have the following.

\begin{proposition}
\label{prop: sections blowup}
    The global sections of $H^0\left(\Hilb^n (\mathrm{Bl}_0 \C^2),\CO(m)\otimes \CO(kE)_n\right)$ consist of polynomials $f$ in $A^{m}$ such that each monomial term $x_1^{a_1}y_1^{b_1} \cdots x_n^{a_n}y_n^{b_n}$ of $f$ satisfies $a_i+b_i \geq k$ for all $i$. 
\end{proposition}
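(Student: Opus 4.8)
The plan is to apply the theorem just proved, on global sections of line bundles over Hilbert schemes of smooth toric surfaces, to the special case $X=\Bl_0\C^2$ and $D=kE$, and then simply read off the support constraints from the fan of $\Bl_0\C^2$. The only work is the toric bookkeeping.

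First I would record the toric data. Viewing $\C^2=\Spec\C[x,y]$ as the toric surface whose fan is the first-quadrant cone in $N=\Z^2$ spanned by $(1,0)$ and $(0,1)$, the blowup at the origin is the star subdivision of this cone along the ray generated by $(1,1)$, giving the two smooth cones $\langle(1,0),(1,1)\rangle$ and $\langle(1,1),(0,1)\rangle$. Thus $\Bl_0\C^2$ is a smooth toric surface with torus-invariant curves $D_1,D_2,E$ attached to the ray generators $(1,0),(0,1),(1,1)$, and the exceptional divisor is the curve $E$ corresponding to the new ray $(1,1)$. With the grading convention of this section, $\deg x=q$ and $\deg y=t$, a monomial $x^ay^b$ has character $(a,b)\in M$, so its order along the ray $(\alpha,\beta)\in N$ is $\alpha a+\beta b$, matching the normalization used in the proof of the theorem.

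Next I would apply the theorem with $D=kE=0\cdot D_1+0\cdot D_2+k\cdot E$. By the theorem, the support constraint contributed by the ray $(\alpha_j,\beta_j)$ with coefficient $c_j$ is $\alpha_j a+\beta_j b\geq c_j$ in each pair of variables $(x_i,y_i)$. Here these read $a_i\geq 0$ (from $(1,0)$), $b_i\geq 0$ (from $(0,1)$), and $a_i+b_i\geq k$ (from $(1,1)$), so $P_{kE}=\{(a,b):a\geq 0,\ b\geq 0,\ a+b\geq k\}$. The theorem then identifies $H^0(\Hilb^n(\Bl_0\C^2),\CO(m)\otimes\CO(kE)_n)$ with the elements $f\in\widetilde{A^m}$ each of whose monomials $x_1^{a_1}y_1^{b_1}\cdots x_n^{a_n}y_n^{b_n}$ satisfies these inequalities in every pair $(a_i,b_i)$. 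Finally I would convert ``$f\in\widetilde{A^m}$ with nonnegative support'' into ``$f\in A^m$'': the conditions $a_i,b_i\geq 0$ say that $f$ is an honest polynomial, and applying the theorem instead to $X=\C^2$ with $D=0$ (equivalently, combining Theorem~\ref{thm:secC2HS} with Corollary~\ref{cor:SectionsOverOpens}) shows that the polynomial elements of $\widetilde{A^m}$ are exactly $A^m$. What remains is the single condition $a_i+b_i\geq k$, which is the claim. I do not anticipate a genuine obstacle; the one point that needs care is the sign and orientation conventions, namely that the exceptional ray is $(1,1)$ and that the coefficient $k$ enters as $a_i+b_i\geq k$ rather than $\geq -k$, which is fixed by the normalization in the theorem's proof.
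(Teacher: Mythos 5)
Your proposal is correct and is exactly the argument the paper intends: the proposition is stated as an immediate specialization ("In particular") of the preceding theorem on toric surfaces, applied to $X=\Bl_0\C^2$ with rays $(1,0),(0,1),(1,1)$ and $D=kE$, which yields the constraints $a_i\geq 0$, $b_i\geq 0$, $a_i+b_i\geq k$, with the polynomial elements of $\widetilde{A^m}$ being $A^m$ by Theorem \ref{thm:secC2HS}. Your attention to the paper's vanishing-order convention (so the constraint is $a_i+b_i\geq k$, not $\geq -k$) is the right point of care, and your toric bookkeeping matches.
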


We denote the set of polynomials in $A^{m}$ satisfying the  condition in Proposition \ref{prop: sections blowup} by $A^{m}_{\geq k}$.
To realize these as sections on the nested Hilbert scheme, we substitute 
$x_i-x$ for $x_i$ and $y_i-y$ for $y_i$,
which corresponds to the change of coordinates that shifts the distinguished point $(x,y)$ to the origin.

\begin{definition}
We define the ring homomorphism
$$\phi: \C[x_1, y_1, \dots, x_n, y_n, x, y] \rightarrow \C[x_1, y_1, \dots, x_n, y_n, x, y]$$
    $$x_i \mapsto x_i-x, \, y_i \mapsto y_i-y,$$
    $$x \mapsto x, \, y \mapsto y.$$
\end{definition}

\begin{remark}
We think of $\phi$ as an algebraic incarnation of the isomorphism in Lemma \ref{lem: factorization}.  Indeed, $(x_i,y_i)$ are support points of the subscheme $Z$ while $(x,y)=\pi(Z,Z')$ is the extra support point of $Z'$ for  $(Z,Z')\in \Hilb^{n,n+1}(\C^2)$.  
\end{remark}

\begin{lemma}
\label{lem: A[x,y] span}
a) Suppose $f\in A^m$ is a homogeneous polynomial of degree $q^{d_1}t^{d_2}$, then
\begin{equation}
\label{eq: shifted f}
\phi(f)=\\
f(x_1,y_1,\ldots,x_n,y_n)+\sum_{a\le d_1,b\le d_2,(a,b)\neq (d_1,d_2)} x^ay^b g_{a,b}(x_1,y_1,\ldots,x_n,y_n)
\end{equation}
where $g_{a,b}\in A^m$ is homogeneous of degree $q^{d_1-a}t^{d_2-b}$.

b) For all $m\in \Z$ the map $\phi$ maps $A^m[x,y]$ isomorphically to itself.
\end{lemma}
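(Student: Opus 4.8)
The plan rests on two structural features of $\phi$. First, $\phi$ preserves the bigrading: the substitutions $x_i\mapsto x_i-x$ and $y_i\mapsto y_i-y$ send monomials to bihomogeneous expressions of the same bidegree, since $\deg x_i=\deg x=q$ and $\deg y_i=\deg y=t$. Second, $\phi$ is $S_n$-equivariant: for $\sigma\in S_n$ one checks $\sigma(\phi(x_i))=x_{\sigma(i)}-x=\phi(\sigma(x_i))$, and similarly for $y_i$, while $x,y$ are fixed by both $\phi$ and $\sigma$, so $\phi$ commutes with the $S_n$-action. These two facts, together with the product description of $A^m$, will yield all claims.

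For part (a), write $\phi(f)=\sum_{a,b}x^ay^bg_{a,b}$ with $g_{a,b}\in\C[x_1,\dots,y_n]$. Since $\phi(f)$ is bihomogeneous of degree $q^{d_1}t^{d_2}$, each nonzero $g_{a,b}$ is bihomogeneous of degree $q^{d_1-a}t^{d_2-b}$; as the $g_{a,b}$ are genuine polynomials this already forces $a\le d_1$ and $b\le d_2$, and setting $x=y=0$ identifies the coefficient $g_{0,0}$ with $f$, the stated leading summand. The substantive claim is that each $g_{a,b}$ lies in $A^m$, which follows once we know $\phi(A^m)\subseteq A^m[x,y]$. For $m>0$, $A^m$ is spanned by products $\alpha_1\cdots\alpha_m$ of $m$ elements of $A$; equivariance shows each $\phi(\alpha_i)$ is again antisymmetric in the pairs $(x_i,y_i)$ with $x,y$ as parameters, hence lies in $A[x,y]=A\otimes\C[x,y]$, and expanding $\phi(\alpha_1\cdots\alpha_m)=\phi(\alpha_1)\cdots\phi(\alpha_m)$ exhibits it as a $\C[x,y]$-combination of products of $m$ elements of $A$, i.e. an element of $A^m[x,y]$. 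For $m\le 0$ the inclusion $\phi(A^m)\subseteq A^m[x,y]$ is immediate, since then $A^m$ is the $\sgn(m)$-isotypic subspace of the whole ring and $\phi$ is $S_n$-equivariant. (Alternatively, one may argue that $\phi$ fixes every difference $x_i-x_j$, $y_i-y_j$ and invoke Haiman's description $J^m=\bigcap_{i<j}(x_i-x_j,y_i-y_j)^m$ from Theorem \ref{thm:secC2HS}, using flatness of $\C[x,y]$ over $\C$ to pass extension through the finite intersection.) Finally, because the monomials $x^ay^b$ form a $\C[x_1,\dots,y_n]$-basis of the larger ring, membership $\phi(f)\in A^m[x,y]=A^m\otimes\C[x,y]$ forces each coefficient $g_{a,b}$ into $A^m$.

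For part (b), I would first note that $\phi$ is a ring automorphism of $\C[x_1,\dots,y_n,x,y]$, with inverse $x_i\mapsto x_i+x$, $y_i\mapsto y_i+y$, $x\mapsto x$, $y\mapsto y$; in particular $\phi$ is injective. Applying part (a) to the bihomogeneous components of an arbitrary element of $A^m[x,y]$ gives $\phi(A^m[x,y])\subseteq A^m[x,y]$. Since $\phi$ preserves the bigrading and each bigraded piece of $A^m[x,y]$ is finite-dimensional (being a finite sum of products $x^ay^b$ with $a\le d_1$, $b\le d_2$ times bihomogeneous pieces of $A^m$), an injective, bidegree-preserving endomorphism restricts to an isomorphism on each such piece; summing over bidegrees shows $\phi$ maps $A^m[x,y]$ isomorphically onto itself.

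I expect the main obstacle to be the inclusion $\phi(A^m)\subseteq A^m[x,y]$, that is, verifying that $\phi$ respects the power structure defining $A^m$ and not merely the $\sgn(m)$-symmetry. The grading bookkeeping that produces the support constraints and the leading term, and the finite-dimensional surjectivity argument in part (b), are routine; equivariance is formal. The one place requiring genuine care is to control the image of a product of $m$ antisymmetric polynomials under $\phi$ (or, in the alternative route, to correctly commute the ideal extension past Haiman's finite intersection), so that the coefficients $g_{a,b}$ land back in $A^m$ rather than merely in the $\sgn(m)$-isotypic subspace.
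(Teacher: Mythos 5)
Your proof is correct, and part (a) follows essentially the same route as the paper: degree preservation plus $S_n$-equivariance of $\phi$ handles the symmetric/antisymmetric cases, while for $m>0$ you expand $\phi(\alpha_1\cdots\alpha_m)=\phi(\alpha_1)\cdots\phi(\alpha_m)$ with each $\phi(\alpha_i)\in A[x,y]$ and use uniqueness of the coefficients $g_{a,b}$ to conclude $g_{a,b}\in A^m$ — this is exactly the paper's argument (your parenthetical alternative via Haiman's intersection description $J^m=\bigcap_{i<j}(x_i-x_j,y_i-y_j)^m$ and flatness also works, and is a genuinely independent way to land the coefficients in $A^m$). Where you diverge is part (b): the paper fixes a homogeneous basis $\{f_\alpha\}$ of $A^m$, orders the basis $\{f_\alpha x^iy^j\}$ of $A^m[x,y]$ by a partial order on the $(i,j)$-exponents, and observes that \eqref{eq: shifted f} makes the matrix of $\phi$ unitriangular, hence invertible; you instead note that $\phi$ is a ring automorphism of the ambient ring (explicit inverse $x_i\mapsto x_i+x$, $y_i\mapsto y_i+y$), so its restriction to $A^m[x,y]$ is an injective, bidegree-preserving endomorphism, and injectivity forces surjectivity on each finite-dimensional bigraded piece. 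Both arguments are sound; yours avoids choosing a basis and a partial order, while the paper's unitriangularity makes the inverse visible term by term. One small point common to both proofs: the exclusion $(a,b)\neq(d_1,d_2)$ in \eqref{eq: shifted f} is automatic only when $A^m$ contains no nonzero constants (e.g.\ $m\geq 1$, or $m$ odd); for $m\leq 0$ even a constant coefficient $g_{d_1,d_2}$ can occur (e.g.\ $f=x_1+\cdots+x_n$, $m=0$, where $\phi(f)=f-nx$), a harmless caveat that affects neither your argument for (b) nor the paper's.
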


\begin{proof}
a)  Observe that  the map $\phi$  preserves the degree and commutes with the action of $S_n$. If $f\in A$ is a homogeneous antisymmetric polynomial of degree $q^{d_1}t^{d_2}$ then  $\phi(f)$ is antisymmetric and belongs to $A[x,y]$, so $g_{a,b}\in A$. Similarly, if $f$ is symmetric then all $g_{a,b}$ are symmetric, this proves the statement for $m\le 1$. 

For $m>1$, observe that $\phi(f_1\cdots f_m)=\phi(f_1)\cdots \phi(f_m)$. If $f_i\in A$ for each $i$ then we can expand $\phi(f_i)$ as a polynomial in $x,y$ using \eqref{eq: shifted f}, and all coefficients belong to $A$. By multiplying such expansions, we get an expansion of $\phi(f_1\cdots f_m)$ where all coefficients are in $A^m$. Since the coefficients $g_{a,b}$ are uniquely determined by $f$, the result follows.  

b) If we choose a homogeneous basis $\{f_\alpha\}$ of $A^m$ then $f_{\alpha}x^iy^j$ is a homogeneous basis of $A^m[x,y]$. We introduce a partial order on the latter by 
$$
f_{\alpha}x^iy^j\prec f_{\beta}x^{i'}y^{j'}\quad \mathrm{if}\ i\le i',j\le j', (i,j)\neq (i',j').
$$
We claim that $\phi(f_{\alpha})x^iy^j$ is another basis of $A^m[x,y]$. Indeed, by \eqref{eq: shifted f} it is related to $f_{\alpha}x^iy^j$ by a unitriangular matrix, therefore $\phi$ maps $A^m[x,y]$ isomorphically to itself.
\end{proof}



\begin{definition}
Let $I\subset \C[x_1,\ldots,x_n,y_1,\ldots,y_n,x,y]$ be the ideal 
$
I = \bigcap_{i=1}^n (x_i-x,y_i-y).$
\end{definition}

\begin{lemma}
\label{lem: f in I^k}
We have that
$
I^k=\bigcap_{i=1}^n (x_i-x,y_i-y)^k.
$
Furthermore,
$$f\in \spann\left(x_1^{a_1}y_1^{b_1} \cdots x_n^{a_n}y_n^{b_n}\mid a_i+b_i\ge k\quad \mathrm{for\ all}\ i\right)$$
if and only if $\phi(f)\in I^k$. 
\end{lemma}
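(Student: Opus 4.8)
The plan is to transport everything to the ``untwisted'' monomial ideals $\mathfrak{m}_i = (x_i, y_i)$ by means of the automorphism $\phi$. The essential observation is that $\phi$ is a ring \emph{automorphism}, with inverse $x_i \mapsto x_i + x$, $y_i \mapsto y_i + y$, $x \mapsto x$, $y \mapsto y$, and that it acts cleanly on generators: $\phi(\mathfrak{m}_i) = (x_i - x, y_i - y)$. Since an automorphism commutes with finite intersections and with taking powers, it identifies $I = \bigcap_i (x_i-x,y_i-y)$ with $\phi\!\left(\bigcap_i \mathfrak m_i\right)$, and both assertions of the lemma become statements about the $\mathfrak m_i$.

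First I would establish the purely combinatorial identity $\left(\bigcap_i \mathfrak m_i\right)^k = \bigcap_i \mathfrak m_i^k$ in $\C[x_1,y_1,\dots,x_n,y_n,x,y]$. Applying $\phi$ to this identity, and using $\phi(\mathfrak m_i) = (x_i-x,y_i-y)$ together with compatibility of $\phi$ with intersections and powers, yields the first claim $I^k = \bigcap_i (x_i-x,y_i-y)^k$ at once. Both sides of the monomial identity are monomial ideals, so it suffices to compare which monomials they contain. The inclusion $\left(\bigcap_i \mathfrak m_i\right)^k \subseteq \bigcap_i \mathfrak m_i^k$ is immediate from $\bigcap_i \mathfrak m_i \subseteq \mathfrak m_j$ for each $j$.

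The reverse inclusion is the combinatorial heart of the argument and the step I expect to be the main obstacle. Given a monomial $M = \prod_i x_i^{a_i} y_i^{b_i}$ (the free variables $x,y$ play no role and may be ignored) with $a_i + b_i \geq k$ for every $i$, I must exhibit $M$ as a multiple of a product of $k$ generators of $\bigcap_i \mathfrak m_i$. I would do this greedily: for each $i$, since $a_i + b_i \geq k$, assign to the indices $\ell = 1,\dots,k$ the variable $x_i$ for $\min(a_i,k)$ of them and $y_i$ for the remaining $\max(k-a_i,0) \leq b_i$ of them. Setting $g_\ell = \prod_i w_{i,\ell}$, where $w_{i,\ell} \in \{x_i, y_i\}$ is the variable chosen for $(i,\ell)$, each $g_\ell$ lies in $\bigcap_i \mathfrak m_i$, and by construction $g_1 \cdots g_k = \prod_i x_i^{\min(a_i,k)} y_i^{\max(k-a_i,0)}$ divides $M$. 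Hence $M \in \left(\bigcap_i \mathfrak m_i\right)^k$, which finishes the identity.

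Finally, the second claim follows formally. From the above we have $I^k = \phi\!\left(\bigcap_i \mathfrak m_i^k\right)$, so since $\phi$ is bijective, $\phi(f) \in I^k$ if and only if $f \in \bigcap_i \mathfrak m_i^k$. Because $\bigcap_i \mathfrak m_i^k$ is a monomial ideal whose monomials are exactly those with $a_i + b_i \geq k$ for all $i$, and $f$ involves only the variables $x_i, y_i$, this membership is equivalent to every monomial of $f$ satisfying $a_i + b_i \geq k$, that is, to $f$ lying in the stated span.
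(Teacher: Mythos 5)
Your proof is correct and takes essentially the same route as the paper: both transport the problem through the automorphism $\phi$ to the monomial ideal $I'=\bigcap_{i}(x_i,y_i)$ and use that $(I')^k=\bigcap_{i}(x_i,y_i)^k$ is spanned by the monomials with $a_i+b_i\ge k$ for all $i$. The only difference is that you spell out the greedy combinatorial argument proving $\left(\bigcap_{i}(x_i,y_i)\right)^k=\bigcap_{i}(x_i,y_i)^k$, a step the paper asserts with ``Similarly'' and no further detail.
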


\begin{proof}
First, observe that the monomial ideal
$
I'=\bigcap_{i=1}^n(x_i,y_i)  
$
is spanned by the monomials $x_1^{a_1}y_1^{b_1} \cdots x_n^{a_n}y_n^{b_n}$ such that $a_i+b_i\ge 1$ for all $i$. Similarly, 
$
(I')^k=\bigcap_{i=1}^n(x_i,y_i)^k 
$
is spanned by the monomials $x_1^{a_1}y_1^{b_1} \cdots x_n^{a_n}y_n^{b_n}$ such that $a_i+b_i\ge k$ for all $i$. It remains to notice that $\phi(I')=I$ and 
$$
\phi((I')^k)=I^k=\bigcap_{i=1}^n (x_i-x,y_i-y)^k.
$$
\end{proof}

\begin{lemma}\label{lem:changeofvariables}
    The map
    $\phi: A^{m+k}_{\geq k}[x,y] \rightarrow \C[x_1, y_1, \dots, x_n, y_n, x, y]$
    is injective with image $A^{m+k}[x,y] \cap I^k$.
\end{lemma}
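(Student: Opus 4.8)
The plan is to exploit the fact that $\phi$ is not merely a ring homomorphism but a ring \emph{automorphism} of $R := \C[x_1,y_1,\dots,x_n,y_n,x,y]$, with inverse given by $x_i \mapsto x_i + x$, $y_i \mapsto y_i + y$, $x \mapsto x$, $y \mapsto y$. Injectivity of the map on $A^{m+k}_{\geq k}[x,y]$ is then immediate, since $\phi$ is injective on all of $R$ (this is already recorded, for the larger space $A^{m+k}[x,y]$, in Lemma \ref{lem: A[x,y] span}(b)). Thus the entire content of the statement is the identification of the image, and the idea is to write the source space as an intersection of two subspaces of $R$ whose images under $\phi$ we already control, and then to use that a bijective linear map commutes with taking intersections.

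First I would unwind the definitions to present $A^{m+k}_{\geq k}[x,y]$ as an intersection. Let $M_{\geq k} \subset \C[x_1,\dots,y_n]$ denote the span of the monomials $x_1^{a_1}y_1^{b_1}\cdots x_n^{a_n}y_n^{b_n}$ with $a_i + b_i \geq k$ for all $i$, so that by definition $A^{m+k}_{\geq k} = A^{m+k} \cap M_{\geq k}$. Recall from the proof of Lemma \ref{lem: f in I^k} the monomial ideal $I' = \bigcap_{i=1}^n (x_i,y_i)$; as a vector subspace of $R$ its $k$-th power $(I')^k = \bigcap_i (x_i,y_i)^k$ is exactly $M_{\geq k}[x,y]$, i.e.\ the span of monomials satisfying the support condition in the $x_i,y_i$ and arbitrary in $x,y$. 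Since the coefficient of each $x^c y^d$ in an element of $R$ is well-defined, membership in both $A^{m+k}[x,y]$ and $(I')^k = M_{\geq k}[x,y]$ may be checked coefficientwise in $x,y$, and therefore
\[
A^{m+k}_{\geq k}[x,y] \;=\; A^{m+k}[x,y] \,\cap\, (I')^k .
\]

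Then I would apply $\phi$ and push the intersection through. Because $\phi$ is injective we have $\phi(U \cap V) = \phi(U) \cap \phi(V)$ for any subspaces $U,V$, so
\[
\phi\!\left(A^{m+k}_{\geq k}[x,y]\right) \;=\; \phi\!\left(A^{m+k}[x,y]\right) \,\cap\, \phi\!\left((I')^k\right).
\]
Now Lemma \ref{lem: A[x,y] span}(b) gives $\phi(A^{m+k}[x,y]) = A^{m+k}[x,y]$, while the computation in the proof of Lemma \ref{lem: f in I^k} gives $\phi((I')^k) = \bigcap_i (x_i-x,y_i-y)^k = I^k$. Combining these yields that the image is $A^{m+k}[x,y] \cap I^k$, as claimed.

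I do not anticipate a genuine obstacle: once the automorphism viewpoint is adopted, every step is formal. The only point requiring care is the bookkeeping, namely verifying that the two relevant intersections (the one defining $A^{m+k}_{\geq k}[x,y]$ and the one commuting with $\phi$) really are taken coefficientwise in the variables $x,y$, which is what makes the decomposition legitimate and compatible with $\phi$ fixing $x$ and $y$. If one preferred to avoid describing $(I')^k$ as a subspace, an alternative is to establish the two inclusions directly: containment of the image in $A^{m+k}[x,y]\cap I^k$ using the support characterization of $I^k$ from Lemma \ref{lem: f in I^k}, and surjectivity onto $A^{m+k}[x,y] \cap I^k$ by applying $\phi^{-1}$ and checking the preimage lands back in $A^{m+k}_{\geq k}[x,y]$; but the intersection argument is cleaner.
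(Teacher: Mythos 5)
Your proof is correct and takes essentially the same approach as the paper: both arguments combine Lemma \ref{lem: A[x,y] span}(b) (that $\phi$ maps $A^{m+k}[x,y]$ isomorphically onto itself) with Lemma \ref{lem: f in I^k} (that the support condition $a_i+b_i\geq k$ corresponds under $\phi$ to membership in $I^k$). Your packaging---writing the source as $A^{m+k}[x,y]\cap (I')^k$ and pushing the intersection through the injective map $\phi$---is just a subspace-level restatement of the paper's elementwise argument, with the automorphism viewpoint making injectivity explicit.
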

\begin{proof}
    Lemma \ref{lem: A[x,y] span} tells us that $\phi$ maps $A^{m+k}[x,y]$ isomorphically to itself, and Lemma \ref{lem: f in I^k} tells us that $f \in A^{m+k}[x,y]$ satisfies the support condition $a_i+b_i\ge k$ if and only if $\phi(f) \in I^k$, so the result follows.
\end{proof}

\begin{theorem}
\label{thm: sections nested}
We have an isomorphism
$$
H^0(\Hilb^{n,n+1}(\C^2),\CO(m,k))\simeq A^{m+k}[x,y] \cap I^k.
$$
Equivalently, the global sections of $\CO(m,k)$ can be identified with  the $\sgn(m+k)$-component of $J^{m+k}\cap I^k\subseteq \C[x_1,y_1,\dots,x_n,y_n,x,y].$
\end{theorem}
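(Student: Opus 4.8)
The plan is to build the isomorphism as a chain of identifications, each furnished by one of the results above; the only genuine geometric input is the passage from the full nested Hilbert scheme to the fiber $\Hilb^{n,n+1}_0(\C^2)$ over the origin. First I would invoke Lemma \ref{lem: factorization} to write $\Hilb^{n,n+1}(\C^2)\simeq \C^2\times \Hilb^{n,n+1}_0(\C^2)$, where the $\C^2$ factor records the distinguished point $(x,y)=\pi(Z,Z')$. Since $\C^2$ is an affine space and $\Hilb^{n,n+1}_0(\C^2)$ is smooth, hence normal, the second projection induces an isomorphism $\Pic(\C^2\times \Hilb^{n,n+1}_0(\C^2))\simeq \Pic(\Hilb^{n,n+1}_0(\C^2))$; thus $\CO(m,k)$ is pulled back from its restriction $\CO(m,k)|_{\Hilb^{n,n+1}_0(\C^2)}$ to the fiber $\{0\}\times \Hilb^{n,n+1}_0(\C^2)=\pi^{-1}(0)$. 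A Künneth argument (using that $\C^2$ is affine) then yields
\[
H^0\!\left(\Hilb^{n,n+1}(\C^2),\CO(m,k)\right)\simeq \C[x,y]\otimes H^0\!\left(\Hilb^{n,n+1}_0(\C^2),\CO(m,k)\right).
\]

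Next I would use \eqref{eq: O(m) L notation} to rewrite the restricted bundle as $\CO(m,k)=\pi_n^*\CO(m+k)\otimes \CL^{k}$, which is exactly the shape to which Corollary \ref{cor:birationalequiv} applies (with $m$ replaced by $m+k$), giving an identification with $H^0(\Hilb^n(\Bl_0\C^2),\CO(m+k)\otimes \CO(kE)_n)$. Proposition \ref{prop: sections blowup} then identifies this last space with $A^{m+k}_{\ge k}$. Combining the displayed isomorphism with this chain produces
\[
H^0\!\left(\Hilb^{n,n+1}(\C^2),\CO(m,k)\right)\simeq A^{m+k}_{\ge k}[x,y],
\]
after which Lemma \ref{lem:changeofvariables} supplies the final isomorphism $\phi:A^{m+k}_{\ge k}[x,y]\xrightarrow{\sim} A^{m+k}[x,y]\cap I^k$, establishing the first assertion.

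For the equivalent formulation I would note that, because $A$ involves only the variables $x_i,y_i$, the ideal $J^{m+k}$ in the larger ring equals $J^{m+k}[x,y]$, whose $\sgn(m+k)$-component is $A^{m+k}[x,y]$ by Theorem \ref{thm:secC2HS}. Since $I=\bigcap_i(x_i-x,y_i-y)$ is permuted into itself by $S_n$, the ideal $I^k$ is $S_n$-invariant, so the isotypic projection onto the $\sgn(m+k)$-component commutes with intersection by $I^k$: for an $S_n$-submodule $V$ and an invariant subspace $W$ one has $(V\cap W)^{\sgn(m+k)}=V^{\sgn(m+k)}\cap W$, as one checks by applying the projector and using invariance of $V$ and $W$ for the two inclusions. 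Taking $V=J^{m+k}$ and $W=I^k$ shows that the $\sgn(m+k)$-component of $J^{m+k}\cap I^k$ is exactly $A^{m+k}[x,y]\cap I^k$, matching the first description.

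I expect the main obstacle to be the first paragraph: confirming that the factorization of Lemma \ref{lem: factorization} genuinely strips off a tensor factor of $\C[x,y]$ at the level of global sections. This rests on the triviality of the relevant part of the Picard group of the $\C^2$ factor, so that $\CO(m,k)$ is pulled back from the fiber, together with the bookkeeping that realizes the restricted bundle in the form $\pi_n^*\CO(m+k)\otimes \CL^{k}$ required to invoke Corollary \ref{cor:birationalequiv}. Once this geometric step is secured, the remaining identifications are direct applications of the cited results.
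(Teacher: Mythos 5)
Your proof is correct and takes essentially the same route as the paper: the paper's argument is exactly the chain Lemma \ref{lem: factorization} (the $\C^2$ factor contributing $\C[x,y]$), Corollary \ref{cor:birationalequiv} applied with $m$ replaced by $m+k$ via \eqref{eq: O(m) L notation}, Proposition \ref{prop: sections blowup}, and the isomorphism $\phi$ of Lemma \ref{lem:changeofvariables}, organized as a commutative diagram of isomorphisms. Your additional justifications --- the Picard/K\"unneth step behind stripping off the $\C[x,y]$ factor, and the check that taking the $\sgn(m+k)$-component commutes with intersecting with the $S_n$-stable ideal $I^k$ --- merely spell out details the paper leaves implicit.
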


\begin{proof}
We claim that Corollary \ref{cor:birationalequiv} can be strengthened to a commutative diagram of isomorphisms:
$$
\begin{tikzcd}
 H^0(\Hilb^n (\mathrm{Bl}_0 \C^2)\times \C^2,\CO(m+k)\otimes \CO(kE)_n) \arrow{r} \arrow{d}{\beta_n} &  A^{m+k}_{\ge k}[x,y] \arrow{d}{\mathrm{Id}}\\
 H^0(\Hilb^{n,n+1}_0(\C^2)\times \C^2,\CO(m,k)) \arrow{r} \arrow{d}{\alpha} & A^{m+k}_{\ge k}[x,y] \arrow{d}{\phi} \\
  H^0(\Hilb^{n,n+1}(\C^2),\CO(m,k))
  \arrow{r}
  & A^{m+k}[x,y]\cap I^k.
\end{tikzcd}
$$
The top horizontal arrow is given by Proposition \ref{prop: sections blowup}, where we regard $x,y$ as coordinates on the auxiliary $\C^2$. The map $\beta_n$ defines an isomorphism in Corollary \ref{cor:birationalequiv} and the map $\alpha$ is given by Lemma \ref{lem: factorization}. Finally, by Lemma \ref{lem:changeofvariables} $\phi$ is an isomorphism between $A^{m+k}_{\ge k}[x,y]$ and $A^{m+k}[x,y]\cap I^k$ and the result follows.
\end{proof}


\begin{example}
    When $k=1$ and $m=-1$, the sections of $\CO(-1,1) = \CL$ are identified with $I \cap A^0 = I^{S_n}$. In particular, we start with polynomials in $A^0_{\geq 1}$, which are polynomials in $\C[x_1, y_1, \dots, x_n, y_n]^{S_n}$ satisfying the support condition, and apply the map $\phi$.

    Let $\{m_S\}$ be the monomial basis of $\C[x_1, y_1, \dots, x_n, y_n]^{S_n}$ indexed by multisets $S \subseteq \Z^2_{\geq 0}$ of size $n$. The support condition is equivalent to the condition that $(0,0) \notin S$. Thus 
    $$\{\phi(m_S) | (0,0) \notin S\}$$
    gives a basis for global sections of $\CL$ coming from sections on the Hilbert scheme of the blowup.
\end{example}

\begin{example}
    When $k=1$ and $m=0$, sections of $\CO(0,1) = \CO(1) \otimes \CL$ are identified with $I \cap A = I^{\sgn}$. We can similarly take the determinant basis $\{\Delta_S\}$ of antisymmetric polynomials, indexed by subsets $S \subseteq \Z^2_{\geq 0}$ of size $n$. Again the support condition tells us that $S$ does not contain the origin, and 
    $$\{\phi(\Delta_S) | (0,0) \notin S \}$$ 
    gives a basis for global sections.
\end{example}

\section{Trailing Terms of Global Sections}

\subsection{Trailing Terms Analysis}

In this section we describe bases for the global sections of line bundles on $\Hilb^n(\Bl_0 \C^2)$ and $\Hilb^{n,n+1}(\C^2)$ by characterizing the sets of trailing term exponents of the corresponding polynomials. Throughout, we use the lexicographic term order with $x_1<\cdots<x_n<y_1<\cdots<y_n$. Here the \textit{trailing term} of $f$ is defined to be the term $x_1^{a_1}\cdots x_n^{a_n}y_1^{b_1}\cdots y_n^{b^n}$ such that $(a_1,\dots,a_n,b_1,\dots,b_n)$ is lexicographically minimal among all the terms of $f$. 

\begin{example}\label{ex:m=0}
    For $m=0$, $A^0_{\geq k}$ is the set of symmetric polynomials with respect to the diagonal $S_n$-action on $\C[x_1,y_1,\dots,x_n,y_n]$ satisfying the support constraint. There is a simple basis for this space consisting of polynomials of the form
    \[ x_1^{a_1}\cdots x_n^{a_n}y_1^{b_1}\cdots y_n^{b_n} + (\text{symmetric terms}),\]
    where $(a_1,b_1)\leq \cdots \leq (a_n,b_n)$ are integer points with $a_i,b_i\geq 0$ and $a_i+b_i\geq k$. With the points $(a_i,b_i)$ ordered in non-decreasing lexicographic order as above, $x_1^{a_1}\cdots x_n^{a_n}y_1^{b_1}\cdots y_n^{b_n}$ is the trailing term of this polynomial. One can show that the trailing term of any element of $A^0_{\geq k}$ is of this form.
\end{example}

\begin{example}\label{ex:m=1}
    For $m=1$, $A^1_{\geq k}$ is the set of anti-symmetric polynomials with respect to the diagonal $S_n$-action on $\C[x_1,y_1,\dots,x_n,y_n]$ satisfying the support constraint. There is again a simple basis for this space, the determinants $\Delta_S$ where $S = \{(a_1,b_1),\dots,(a_n,b_n)\}$ is a collection of distinct integer points $(a_1,b_1)<\cdots<(a_n,b_n)$ with $a_i,b_i\geq 0$ and $a_i+b_i\geq k$. Again, we order $(a_i,b_i)$ in increasing lexicographic order as above so that $x_1^{a_1}\cdots x_n^{a_n}y_1^{b_1}\cdots y_n^{b_n}$ is the trailing term of this polynomial. Similarly, one can show that the trailing term of any element of $A^1_{\geq k}$ is of this form.
\end{example}

Characterizing the trailing terms of $A^m_{\geq k}$ for $m>1$ is more difficult as there are no longer obvious bases to work with. As we will show, the trailing term exponents of $A^m_{\geq k}$ are characterized by the following explicitly defined set.

\begin{definition}\label{def:P(m,k)}
    For any integers $m,k\geq0$, let $\mathcal{P}(m,k)\subseteq\Z_{\ge 0}^{2n}$ be the subset defined by:
    \begin{enumerate}
        \item $0\leq a_1\leq a_2 \leq \cdots \leq a_n$,
        \item for any $j=1,\dots,n-1$ for which $a_j = a_{j+1}$, we have $b_{j+1}\geq b_j+m$, and
        \item for each $j=1,\dots,n,$ we have $b_j\geq \max\{k-a_j,0\} + \sum_{i=1}^{j-1}\max\{m-(a_j-a_i),0\}.$
    \end{enumerate}
\end{definition}

\begin{theorem}\label{thm:blowuptrailingterms}
    For any $n\geq 1$ and $m,k\geq 0$, a monomial $x_1^{a_1}y_1^{b_1}\cdots x_n^{a_n}y_n^{b_n}$ is the trailing term of some polynomial $f \in A^m_{\geq k}$ if and only if $(a_1,\dots,a_n,b_1,\dots,b_n)\in \mathcal{P}(m,k)$.
    For $m>0$, these are exactly the monomials that appear as the trailing term of an $m$-fold product of determinants $\Delta_{S^{(1)}}\cdots \Delta_{S^{(m)}}$ for some $n$-element subsets $S^{(1)},\dots,S^{(m)}\subseteq \Z^2_{\geq 0}$ such that the product $\Delta_{S^{(1)}}\cdots \Delta_{S^{(m)}}$ satisfies the support condition defining $A^m_{\geq k}$.
\end{theorem}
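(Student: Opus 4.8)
The plan is to prove Theorem \ref{thm:blowuptrailingterms} by establishing two inclusions between the set of achievable trailing-term exponents and the combinatorially-defined set $\mathcal{P}(m,k)$, splitting naturally into a \emph{lower bound} (every element of $\mathcal{P}(m,k)$ is realized as a trailing term, in fact by a product of determinants) and an \emph{upper bound} (no trailing term can violate the defining inequalities). Since the cases $m=0,1$ are already handled explicitly in Examples \ref{ex:m=0} and \ref{ex:m=1}, the substance is the case $m>1$, where the second sentence of the theorem tells us exactly which construction to use: products $\Delta_{S^{(1)}}\cdots\Delta_{S^{(m)}}$ of $m$ determinants. I would first set up the key computation of the trailing term of such a product.

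For the realizability direction, I would compute the trailing term of a product of determinants. Each $\Delta_{S}$ is antisymmetric, so its trailing term (in the order $x_1<\cdots<x_n<y_1<\cdots<y_n$) is the monomial $x_1^{a_1}\cdots x_n^{a_n}y_1^{b_1}\cdots y_n^{b_n}$ corresponding to the lexicographically increasing ordering of $S=\{(a_i,b_i)\}$, as recorded in Example \ref{ex:m=1}. The first step is to show that the trailing term of a product is the product of the trailing terms; this requires that the trailing term behave multiplicatively, which is a standard property of a monomial order \emph{provided} the trailing terms do not cancel — and for products of antisymmetric polynomials the relevant leading/trailing coefficients are nonzero, so I would verify the term order is multiplicative here. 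Next, given a target exponent vector in $\mathcal{P}(m,k)$, I would reverse-engineer the subsets $S^{(1)},\dots,S^{(m)}$: condition (1) sets the sorted $x$-exponents, and the crucial content is that conditions (2) and (3) are precisely the numerical constraints that allow the $y$-exponents $b_j$ to be distributed among the $m$ determinants so that within each $\Delta_{S^{(r)}}$ the points are distinct and the overall product satisfies the support condition $a_i+b_i\ge k$ defining $A^m_{\ge k}$. I would make this explicit by a greedy/inductive allocation: process $j=1,\dots,n$ in order, and at each step show that the minimal $b_j$ forced by (3) is exactly the sum over the $m$ factors of the minimal room needed to keep the points distinct in each factor (the $\max\{m-(a_j-a_i),0\}$ terms count the collisions in the $x$-coordinate that must be resolved by raising $y$-coordinates) plus the support contribution $\max\{k-a_j,0\}$.

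For the upper-bound direction, the goal is to show that no $f\in A^m_{\ge k}$ can have a trailing exponent violating (1)–(3). I would argue by contradiction: antisymmetry under $S_n$ forces the trailing exponent to be weakly increasing in lexicographic order on the pairs $(a_i,b_i)$, immediately giving (1) and, when $x$-coordinates tie, strict increase in $b$ — but to get the sharper gaps in (2) and (3) I would use a \emph{vanishing/divisibility} argument rather than mere antisymmetry. The natural tool is that membership in $A^m = J^m$ (via Theorem \ref{thm:secC2HS}) means $f$ lies in $\bigcap_{i<j}(x_i-x_j,y_i-y_j)^m$, so $f$ vanishes to order $m$ along each pairwise diagonal; restricting to or specializing along these diagonals and reading off the trailing term forces the $y$-exponents to be large enough to absorb this vanishing, producing the $\max\{m-(a_j-a_i),0\}$ contributions, while the support condition $a_i+b_i\ge k$ directly yields the $\max\{k-a_j,0\}$ term.

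The main obstacle I expect is the realizability (lower-bound) direction, specifically proving that the allocation of $y$-exponents across the $m$ determinants can always be carried out to meet condition (3) with equality while keeping each $S^{(r)}$ a set of distinct points and respecting the global support constraint simultaneously. The interplay is delicate because the same index $j$ must receive enough $y$-weight to clear collisions in every factor at once, and the bookkeeping of which factors a collision occurs in (governed by the quantities $a_j-a_i$) has to be organized so that the greedy choice is both feasible and optimal. I would isolate this as a self-contained combinatorial lemma about distributing points among $m$ "columns" indexed by $x$-coordinate, and prove it by induction on $j$, showing at each stage that the induction hypothesis guarantees enough slack in each factor; matching this optimal distribution against the formula in (3) is where the heart of the argument lies.
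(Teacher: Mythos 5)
Your skeleton (realizability via products of determinants, plus an upper bound on trailing exponents) matches the paper's, but both halves of your plan have genuine gaps at exactly the points you flag as ``the heart,'' and in each case the paper closes the gap with an idea your proposal does not contain. In the necessity direction, your sketch derives the collision terms $\max\{m-(a_j-a_i),0\}$ from vanishing along the diagonals and the term $\max\{k-a_j,0\}$ from the support condition, and then asserts that condition (3) follows. But these are two separately derived lower bounds on $b_j$, and combining them would only give the maximum of the two, not their \emph{sum}. The paper gets the sum by invoking Proposition 3.5 of \cite{C} (together with $A^m=\overline{A}^m$, Corollary 3.10 of \cite{C}), which says something strictly stronger than a bound on the trailing exponent: the Newton polytope of $f$ contains an integer point whose $(x_j,y_j)$-exponents are $\bigl(a_j,\,b_j-\sum_{i<j}\max\{m-(a_j-a_i),0\}\bigr)$. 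Applying the support constraint $a_i+b_i\geq k$ to \emph{that} monomial of $f$ — not to the trailing term — is what makes the two contributions add. Without a statement of this type, your specialization-along-diagonals argument does not yield (3).

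In the realizability direction, you propose a greedy allocation of $y$-exponents across the $m$ factors, proved by induction on $j$, but you explicitly defer the combinatorial lemma that would make it work; as written this amounts to re-proving the $k=0$ theorem of \cite{C} simultaneously with the new support bookkeeping, and the simultaneous feasibility (``enough slack in each factor at once'') is precisely what is unproven. The paper instead factors the problem: it uses the $k$-lift $\ell_k(S)=\{(a,b+\max\{k-a,0\})\}$, observes that $\ell_k$ is a bijection $\mathcal{P}(m,0)\to\mathcal{P}(m,k)$, takes the decomposition $S=S^{(1)}+\cdots+S^{(m)}$ for the $k=0$ case as a black box from \cite{C} (Propositions 3.7 and 3.9), and then proves only one new statement, Lemma \ref{lem:liftdecomp}: the lift of a sum is a sum of lifts, $\ell_k(S)=\ell_{k_1}(S^{(1)})+\cdots+\ell_{k_m}(S^{(m)})$ with $\sum k_i=k$, proved by induction on $k$ with a carefully strengthened induction hypothesis (if $a_j\leq k$ then $a_j^{(i)}\leq k_i$ for all $i$). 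Each factor $\Delta_{\ell_{k_i}(S^{(i)})}$ then lies in $A^1_{\geq k_i}$, so the product lies in $A^m_{\geq k}$ and has the desired trailing term. If you want to salvage your approach, the cleanest fix is to adopt this reduction: isolate the distribution of the support parameter $k$ as its own lemma rather than interleaving it with the (already known) distribution of the collision constraints. Your observation that trailing terms multiply (the minimal term of a product is the product of minimal terms, with no possible cancellation since its coefficient is a product of nonzero coefficients) is correct and is used implicitly by the paper as well.
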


The proof of Theorem \ref{thm:blowuptrailingterms} is given below after some preparatory results. \\

By analogy with the interpretation of the trailing terms in Examples \ref{ex:m=0} and \ref{ex:m=1}, we identify a point $(a_1,\dots,a_n,b_1,\dots,b_n)\in \Z^{2n}$ with an ordered $n$-tuple of points $(a_1,b_1),\dots,(a_n,b_n)$. We refer to the parameters $m$ and $k$ in Definition \ref{def:P(m,k)} as the \textit{separation parameter} and the \textit{support parameter} respectively. For any subset $S\subseteq \Z^2_{\geq 0}$ written $S = \{(a_1,b_1),\dots,(a_n,b_n)\}$ we will always assume without loss of generality that the points are labeled in increasing lexicographic order. 

\begin{definition}
    For sets $S^{(1)},\dots,S^{(m)}\subseteq \Z^2_{\geq 0}$, labeled $S^{(j)} = \{ p^{(j)}_1,\dots,p_n^{(j)}\}$, let $S^{(1)}+\dots+S^{(m)}$ denote the ordered set of points $\{p_1,\dots,p_n\}$ where $p_i = p_i^{(1)}+\cdots+p_i^{(m)}$ for each $i$ coordinate-wise.
\end{definition}

For $m>1$, $A^m$ is spanned by products $\Delta_{S^{(1)}}\cdots \Delta_{S^{(m)}}$, and the trailing term exponent of the product is $S^{(1)}+\dots+S^{(m)}$. In \cite{C} (Propositions 3.7 and 3.9) it was shown that these are the only trailing terms that appear among elements of $A^m$, and that $x_1^{a_1}\cdots x_n^{a_n}y_1^{b_1}\cdots y_n^{b_n}$ is one of these trailing terms if and only if $(a_1,\dots,a_n,b_1,\dots,b_n)\in \mathcal{P}(m,0)$.

\begin{example}\label{ex:c2pointdecomp}
    Consider the set of points $S = \{p_1,\dots,p_4\} = \{(0,0),(0,2),(1,2),(2,1)\}$ with coordinates $(a_1,b_1),\dots,(a_4,b_4)$. This corresponds to a point in $\mathcal{P}(2,0)$, so by the results of \cite{C} there is a product of determinants $\Delta_{S^{(1)}}\Delta_{S^{(2)}}\in A^2$ with trailing term $y_2^2 x_3 y_3^2 x_4^2 y_4$. Equivalently, there exist subsets $S^{(1)} = \{ p^{(1)}_1,p^{(1)}_2,p^{(1)}_3,p^{(1)}_4\}$ and $S^{(2)} = \{ p^{(2)}_1,p^{(2)}_2,p^{(2)}_3,p^{(2)}_4\}$ of $\Z^2_{\geq 0}$ such that $S = S^{(1)}+S^{(2)}$. Such a decomposition is $S^{(1)} = \{ (0,0),(0,1),(0,2),(1,0)\}$ and $S^{(2)} = \{ (0,0),(0,1),(1,0),(1,1) \}$. On the other hand $S' = \{p_1,\dots,p_4\} = \{(0,0),(0,2),(1,1),(2,1)\}$ does not satisfy the conditions for $m=2$, since $b_3 < \max\{2-a_1,0\}+\max\{2-a_2,0\} = 2$. The theorem asserts that there is no element of $A^2$ with trailing term $y_2^2 x_3 y_3 x_4^2 y_4$.
    
    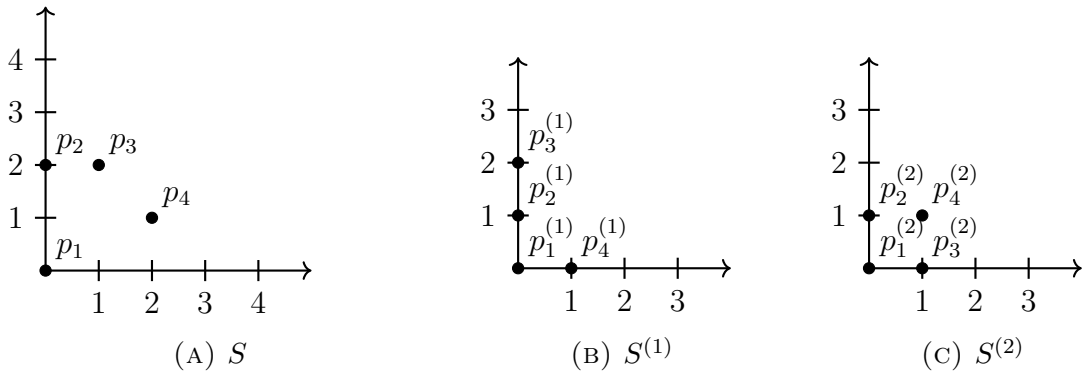
\begin{figure}[h]
        \centering
    \begin{subfigure}{.35\textwidth}
        \begin{tikzpicture}[scale = .7]
            \draw[<->, thick] (5,0)--(0,0)--(0,5);
            \draw[thick] (1,.2)--(1,-.2) node[below] {$1$};
            \draw[thick] (2,.2)--(2,-.2) node[below] {$2$};
            \draw[thick] (3,.2)--(3,-.2) node[below] {$3$};
            \draw[thick] (4,.2)--(4,-.2) node[below] {$4$};
            \draw[thick] (.2,1)--(-.2,1) node[left] {$1$};
            \draw[thick] (.2,2)--(-.2,2) node[left] {$2$};
            \draw[thick] (.2,3)--(-.2,3) node[left] {$3$};
            \draw[thick] (.2,4)--(-.2,4) node[left] {$4$};
            \filldraw[black] (0,0) circle (3pt) node[above right]{$p_1$};
            \filldraw[black] (0,2) circle (3pt) node[above right]{$p_2$};
            \filldraw[black] (1,2) circle (3pt) node[above right]{$p_3$};
            \filldraw[black] (2,1) circle (3pt) node[above right]{$p_4$};
        \end{tikzpicture}
        \caption{$S$}
            \end{subfigure}%
    \hspace{.03\textwidth}
    \begin{subfigure}{.25\textwidth}
        \begin{tikzpicture}[scale = .7]
            \draw[<->, thick] (4,0)--(0,0)--(0,4);
            \draw[thick] (1,.2)--(1,-.2) node[below] {$1$};
            \draw[thick] (2,.2)--(2,-.2) node[below] {$2$};
            \draw[thick] (3,.2)--(3,-.2) node[below] {$3$};
            \draw[thick] (.2,1)--(-.2,1) node[left] {$1$};
            \draw[thick] (.2,2)--(-.2,2) node[left] {$2$};
            \draw[thick] (.2,3)--(-.2,3) node[left] {$3$};
            \filldraw[black] (0,0) circle (3pt) node[above right]{$p_1^{(1)}$};
            \filldraw[black] (0,1) circle (3pt) node[above right]{$p_2^{(1)}$};
            \filldraw[black] (0,2) circle (3pt) node[above right]{$p_3^{(1)}$};
            \filldraw[black] (1,0) circle (3pt) node[above right]{$p_4^{(1)}$};
        \end{tikzpicture}
        \caption{$S^{(1)}$}
            \end{subfigure}%
    \hspace{.03\textwidth}
    \begin{subfigure}{.25\textwidth}
        \begin{tikzpicture}[scale = .7]
            \draw[<->, thick] (4,0)--(0,0)--(0,4);
            \draw[thick] (1,.2)--(1,-.2) node[below] {$1$};
            \draw[thick] (2,.2)--(2,-.2) node[below] {$2$};
            \draw[thick] (3,.2)--(3,-.2) node[below] {$3$};
            \draw[thick] (.2,1)--(-.2,1) node[left] {$1$};
            \draw[thick] (.2,2)--(-.2,2) node[left] {$2$};
            \draw[thick] (.2,3)--(-.2,3) node[left] {$3$};
            \filldraw[black] (0,0) circle (3pt) node[above right]{$p_1^{(2)}$};
            \filldraw[black] (0,1) circle (3pt) node[above right]{$p_2^{(2)}$};
            \filldraw[black] (1,0) circle (3pt) node[above right]{$p_3^{(2)}$};
            \filldraw[black] (1,1) circle (3pt) node[above right]{$p_4^{(2)}$};
        \end{tikzpicture}
                \caption{$S^{(2)}$}
            \end{subfigure}
        \caption{Quadruples of points satisfying $S=S^{(1)}+S^{(2)}$, so that $S$ is the trailing term exponent of an element of $A^2$.}
    \end{figure}
\end{example}

Much of the proof of Theorem \ref{thm:blowuptrailingterms} is based on intermediate results of \cite{C} used to prove the $k=0$ case. The new combinatorics needed for the proof can be phrased in terms of the following operation on integer subsets.

\begin{definition}
    For any integer $k\geq 0$ and subset $S\subseteq \Z^2_{\geq 0}$, the \textit{$k$-lift of $S$} is the set $\ell_k(S) = \{ (a,b+\max\{k-a,0\})\, | \, (a,b)\in S \}.$
\end{definition}

Inspecting the definitions, one immediately sees the following.

\begin{lemma}
The $k$-lift operation $\ell_k$ is a bijection from $\mathcal{P}(m,0)$ to $\mathcal{P}(m,k)$.
\end{lemma}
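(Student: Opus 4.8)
The plan is to treat $\ell_k$ as a coordinate-wise shear on ordered $n$-tuples and to verify directly that it transports the defining inequalities of $\mathcal{P}(m,0)$ to those of $\mathcal{P}(m,k)$, together with its evident inverse. Writing a point of $\mathcal{P}(m,0)$ as an ordered tuple $(a_1,b_1),\dots,(a_n,b_n)$, the $k$-lift fixes each first coordinate and sends $b_i \mapsto b_i' := b_i + \max\{k-a_i,0\}$. Since the $a_i$ are unchanged, condition (1) is immediate, and since within any maximal block of equal first coordinates the same constant $\max\{k-a_i,0\}$ is added to every $b_i$, the tuple stays in increasing lexicographic order and the differences $b_{j+1}-b_j$ are unchanged whenever $a_j=a_{j+1}$; this shows condition (2) is preserved verbatim. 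I would also note $b_i' \ge 0$, so the image lies in $\Z_{\ge 0}^{2n}$.

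The key step is condition (3). For $\mathcal{P}(m,0)$ it reads $b_j \ge \sum_{i=1}^{j-1}\max\{m-(a_j-a_i),0\}$, since $\max\{0-a_j,0\}=0$. Substituting $b_j' = b_j + \max\{k-a_j,0\}$ and $a_j'=a_j$ into the condition (3) defining $\mathcal{P}(m,k)$, the support term $\max\{k-a_j,0\}$ appears on both sides and cancels, leaving exactly the $\mathcal{P}(m,0)$ inequality. Hence a tuple satisfies condition (3) for $\mathcal{P}(m,0)$ if and only if its lift satisfies condition (3) for $\mathcal{P}(m,k)$, and this equivalence runs in both directions.

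To finish, I would exhibit the inverse shear $(a_i,b_i) \mapsto (a_i, b_i-\max\{k-a_i,0\})$, which is a genuine bijection of $\Z^{2n}$ and which, by the identical computation, carries $\mathcal{P}(m,k)$ into $\mathcal{P}(m,0)$; here one uses that condition (3) for $\mathcal{P}(m,k)$ forces $b_j \ge \max\{k-a_j,0\}$, so the inverse stays in $\Z_{\ge 0}^{2n}$. As the two maps are mutually inverse shears of the ambient lattice and each sends one parameter set into the other, the two containments combine to give equality of images, so $\ell_k$ restricts to a bijection $\mathcal{P}(m,0)\to\mathcal{P}(m,k)$.

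There is no real obstacle here: the entire content is the bookkeeping observation that the lift's vertical shift depends on the first coordinate alone, hence is constant on equal-$a$ blocks (protecting conditions (1)--(2)) and is precisely the support term appearing in condition (3). The only point that warrants a moment's care is checking that the lift respects the increasing-lexicographic labeling convention, which holds because shifting all $b_i$ in a constant-$a$ block by a common amount preserves their relative order.
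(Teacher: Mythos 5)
Your proof is correct, and it is exactly the verification the paper has in mind: the paper offers no written argument beyond ``inspecting the definitions, one immediately sees the following,'' and your writeup is precisely that inspection made explicit (the shift $\max\{k-a_j,0\}$ cancels against the support term in condition (3), is constant on equal-$a$ blocks so conditions (1)--(2) and the lexicographic labeling survive, and the evident inverse shear stays in $\Z_{\ge 0}^{2n}$ because condition (3) for $\mathcal{P}(m,k)$ forces $b_j\ge\max\{k-a_j,0\}$). Nothing further is needed.
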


The main combinatorial fact needed for the proof of Theorem \ref{thm:blowuptrailingterms} is the following description of the $k$-lift of a sum $S^{(1)}+\cdots+S^{(m)}$.

\begin{lemma}\label{lem:liftdecomp}
    Let $S^{(1)},\dots,S^{(m)}\subseteq \Z^2_{\geq 0}$ be subsets of size $n$ and set $S = S^{(1)}+\cdots+S^{(m)}$. For any integer $k\geq 0$, there exist $k_1,\dots, k_m\geq 0$ with $\sum_i k_i =k$ such that $\ell_k(S) = \ell_{k_1}(S^{(1)})+\cdots+\ell_{k_m}(S^{(m)})$. Furthermore, we may choose the $k_1,\dots,k_m$ such that for any $j=1,\dots,n$ with $a_j\leq k$, we have $a_j^{(i)}\leq k_i$ for all $i=1,\dots,m$, where $a_1,\dots,a_n$ are the first-coordinates of the points in $S$ and $a_1^{(i)},\dots,a_n^{(i)}$ are those for $S^{(i)}$.
\end{lemma}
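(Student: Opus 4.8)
The plan is to strip the set identity down to a coordinate-wise numerical identity, and then solve a small integer feasibility problem for the weights $k_1,\dots,k_m$. First I would record the structural fact that the $k$-lift preserves lexicographic order: if $(a,b)<_{\mathrm{lex}}(a',b')$ then $\ell_k(a,b)<_{\mathrm{lex}}\ell_k(a',b')$, since when $a<a'$ the first coordinates already decide the order, and when $a=a'$ both points are raised by the same amount $\max\{k-a,0\}$. Consequently $\ell_{k_i}(S^{(i)})$, listed in increasing lex order, has as its $j$-th point the lift of the $j$-th point $(a_j^{(i)},b_j^{(i)})$ of $S^{(i)}$. Writing $a_j=\sum_i a_j^{(i)}$ and $b_j=\sum_i b_j^{(i)}$ for the coordinates of the $j$-th point of $S$, the $j$-th point of $\ell_{k_1}(S^{(1)})+\cdots+\ell_{k_m}(S^{(m)})$ is $\bigl(a_j,\ b_j+\sum_i\max\{k_i-a_j^{(i)},0\}\bigr)$, whereas that of $\ell_k(S)$ is $\bigl(a_j,\ b_j+\max\{k-a_j,0\}\bigr)$. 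So the lemma reduces to choosing $k_1,\dots,k_m\geq 0$ with $\sum_i k_i=k$ such that for every $j$
\[ \sum_{i=1}^m \max\{k_i-a_j^{(i)},0\} = \max\{k-a_j,0\}. \]

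Next I would analyze this identity. Since $\sum_i(k_i-a_j^{(i)})=k-a_j$, the right-hand side equals $\max\bigl\{\sum_i(k_i-a_j^{(i)}),0\bigr\}$, and the elementary inequality $\sum_i\max\{c_i,0\}\geq \max\{\sum_i c_i,0\}$ is an equality exactly when the numbers $c_i=k_i-a_j^{(i)}$ are all of one sign. Thus it suffices to choose the $k_i$ so that for each $j$ with $a_j\leq k$ one has $k_i\geq a_j^{(i)}$ for all $i$, and for each $j$ with $a_j>k$ one has $k_i\leq a_j^{(i)}$ for all $i$.

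To produce such weights, let $j^*$ be the largest index with $a_{j^*}\leq k$ (with $j^*=0$ if $a_1>k$). Because each $S^{(i)}$ is lex-ordered, its first coordinates satisfy $a_1^{(i)}\leq\cdots\leq a_n^{(i)}$, so the sums $a_j=\sum_i a_j^{(i)}$ are weakly increasing in $j$; this lets both families of constraints collapse to the single interval constraint $a_{j^*}^{(i)}\leq k_i\leq a_{j^*+1}^{(i)}$ for each $i$ (with conventions $a_0^{(i)}=0$ and $a_{n+1}^{(i)}=+\infty$). The system admits an integer solution with $\sum_i k_i=k$ because the sum of the lower bounds telescopes to $\sum_i a_{j^*}^{(i)}=a_{j^*}\leq k$ while the sum of the upper bounds is $\sum_i a_{j^*+1}^{(i)}=a_{j^*+1}>k$, and any integer total between the sum of integer lower bounds and the sum of integer upper bounds is realizable; the lower bounds $a_{j^*}^{(i)}\geq 0$ force $k_i\geq 0$. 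Finally, the ``furthermore'' clause is immediate: for $j$ with $a_j\leq k$ we have $j\leq j^*$, hence $a_j^{(i)}\leq a_{j^*}^{(i)}\leq k_i$ for all $i$.

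The only genuine content to get right is the sign-characterization of the identity $\sum_i\max\{c_i,0\}=\max\{\sum_i c_i,0\}$ and the matching of the interval-feasibility bounds with the definition of $j^*$; the telescoping $\sum_i a_{j^*}^{(i)}=a_{j^*}$ is precisely what makes the feasibility inequalities hold by construction, so I expect no real obstacle beyond this bookkeeping and the lex-order preservation of the lift.
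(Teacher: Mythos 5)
Your proof is correct, and it takes a genuinely different route from the paper's. The paper argues by induction on $k$: starting from the trivial case $k=0$, it shows at each step that one of the $k_i$ can be incremented by $1$ so that a decomposition of $\ell_k(S)$ becomes a decomposition of $\ell_{k+1}(S)$, choosing which $k_i$ to increment according to whether some $a_j$ exceeds $k$; the ``furthermore'' clause is carried along precisely as the induction invariant that makes the incrementing step work. You instead give a one-shot construction: after observing that lifting preserves lexicographic order (so the set identity reduces to the scalar identities $\sum_i\max\{k_i-a_j^{(i)},0\}=\max\{k-a_j,0\}$ for each $j$), you identify when subadditivity of $c\mapsto\max\{c,0\}$ is tight, collapse the resulting constraints via monotonicity of the first coordinates to the single box $a_{j^*}^{(i)}\le k_i\le a_{j^*+1}^{(i)}$, and solve the integer feasibility problem by the telescoping identities $\sum_i a_{j^*}^{(i)}=a_{j^*}\le k<a_{j^*+1}=\sum_i a_{j^*+1}^{(i)}$ together with a greedy filling argument. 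Both proofs are complete. The paper's induction is shorter on the page because it never has to spell out the order-preservation and interval-feasibility bookkeeping, whereas your argument is more informative: it exhibits explicitly which weight vectors $(k_1,\dots,k_m)$ work (any integer point of the box summing to $k$), and the ``furthermore'' clause drops out as an immediate corollary of the construction rather than serving as an auxiliary invariant one must remember to preserve.
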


We will not need the final condition in the previous lemma for its applications, but including it in the statement helps to simplify the proof. 

\begin{proof}
    We proceed by induction on $k$ with the trivial $k=0$ base case. Given $\ell_k(S) = \ell_{k_1}(S^{(1)})+\cdots+\ell_{k_m}(S^{(m)})$ with $k_1,\dots,k_n$ as in the statement of the lemma, we will show that one of the $k_i$'s can be replaced by $k_i+1$ to obtain such a decomposition for $\ell_{k+1}(S)$. Writing $\ell_k(S) = \{(a_1,b_1),\dots,(a_n,b_n)\}$, we have $\ell_{k+1}(S) = \{(a_1,b_1'),\dots,(a_n,b_n')\}$, where
    \[ b_j' = \begin{cases}
        b_j+1 & \text{if } a_j\leq k,\\
        b_j & \text{if } a_j>k.
    \end{cases} \]
    If $k\geq a_1,\dots,a_n$, then all the points in $\ell_k(S)$ are shifted up by one to obtain the corresponding points in $\ell_{k+1}(S)$. In this case, we may replace any one of the $k_i$'s with $k_i+1$. Indeed, by hypothesis we have $a^{(i)}\leq k_i$ for all $i,j$, so replacing $\ell_{k_i}(S^{(i)})$ by $\ell_{k_i+1}(S^{(i)})$ in the decomposition also shifts all the points up by one as desired.

    Otherwise let $j$ be the smallest index such that $a_j>k$. In other words, $(a_1,b_1),\dots,(a_{j-1},b_{j-1})$ are the points of $\ell_k(S)$ that must be shifted up to obtain the corresponding points of $\ell_{k+1}(S)$, while $(a_j,b_j),\dots,(a_n,b_n)$ are unchanged. Since $a_j = a_j^{(1)}+\cdots+a_j^{(m)}$ and $k = k_1+\cdots+k_m$, there is some index $i$ such that $a_j^{(i)}>k_i$. By our choice of $j$ we have $a_1,\dots,a_{j-1}\leq k$ so by the final induction hypothesis we also have $a_1^{(i)},\dots,a_{j-1}^{(i)}\leq k_i$. Therefore, replacing $\ell_{k_i}(S^{(i)})$ by $\ell_{k_i+1}(S^{(i)})$ in the decomposition also shifts up exactly the points $(a_1,b_1),\dots,(a_{j-1},b_{j-1})$ by one, as desired. 
    
    Lastly, we check that the final induction hypothesis is preserved by this procedure. In each step, for any index $j'$ such that $a_{j'}>k$ (i.e. $j'>j$ in the notation of the previous paragraph), we are increasing one of the $k_i$'s that has $a_{j'}^{(i)}\geq a_{j}^{(i)}>k_i$. It follows that for any index $j'$ such that $a_{j'}\leq k$, we must have $a_{j'}^{(i)}\leq k_i$ for all $i$.
\end{proof}

\begin{example}\label{ex:blowuppointdecomp}
    Let $S = S^{(1)}+S^{(2)}$ be the decomposition of the set of points studied in Example \ref{ex:c2pointdecomp} corresponding to trailing term of the product $\Delta_{S^{(1)}}\Delta_{S^{(2)}}\in A^2$. For $k=1,2$, the unique such decompositions of $\ell_k(S)$ are $\ell_1(S) = S^{(1)}+\ell_1(S^{(2)})$ and $\ell_2(S) = \ell_1(S^{(1)})+\ell_1(S^{(2)})$. For $k\geq 2$, any choice of $k_1+k_2 = k$ with $k_1,k_2\geq 1$ gives $\ell_k(S) = \ell_{k_1}(S^{(1)})+\ell_{k_2}(S^{(2)})$

    \begin{figure}[h]
    \label{fig:decomp1modified}
        \centering
    \begin{subfigure}{.35\textwidth}
        \begin{tikzpicture}[scale = .7]
            \draw[<->, thick] (5,0)--(0,0)--(0,5);
            \draw[thick] (1,.2)--(1,-.2) node[below] {$1$};
            \draw[thick] (2,.2)--(2,-.2) node[below] {$2$};
            \draw[thick] (3,.2)--(3,-.2) node[below] {$3$};
            \draw[thick] (4,.2)--(4,-.2) node[below] {$4$};
            \draw[thick] (.2,1)--(-.2,1) node[left] {$1$};
            \draw[thick] (.2,2)--(-.2,2) node[left] {$2$};
            \draw[thick] (.2,3)--(-.2,3) node[left] {$3$};
            \draw[thick] (.2,4)--(-.2,4) node[left] {$4$};
            \filldraw[opacity = .4] (0,0)--(0,1)--(1,0)--cycle;
            \filldraw[black] (0,1) circle (3pt) node[above right]{$p_1'$};
            \filldraw[black] (0,3) circle (3pt) node[above right]{$p_2'$};
            \filldraw[black] (1,2) circle (3pt) node[above right]{$p_3$};
            \filldraw[black] (2,1) circle (3pt) node[above right]{$p_4$};
        \end{tikzpicture}
        \caption{$\ell_1(S) = S^{(1)}+\ell_1(S^{(2)})$}
            \end{subfigure}%
    \hspace{.03\textwidth}
    \begin{subfigure}{.25\textwidth}
        \begin{tikzpicture}[scale = .7]
            \draw[<->, thick] (4,0)--(0,0)--(0,4);
            \draw[thick] (1,.2)--(1,-.2) node[below] {$1$};
            \draw[thick] (2,.2)--(2,-.2) node[below] {$2$};
            \draw[thick] (3,.2)--(3,-.2) node[below] {$3$};
            \draw[thick] (.2,1)--(-.2,1) node[left] {$1$};
            \draw[thick] (.2,2)--(-.2,2) node[left] {$2$};
            \draw[thick] (.2,3)--(-.2,3) node[left] {$3$};
            \filldraw[black] (0,0) circle (3pt) node[above right]{$p_1^{(1)}$};
            \filldraw[black] (0,1) circle (3pt) node[above right]{$p_2^{(1)}$};
            \filldraw[black] (0,2) circle (3pt) node[above right]{$p_3^{(1)}$};
            \filldraw[black] (1,0) circle (3pt) node[above right]{$p_4^{(1)}$};
        \end{tikzpicture}
        \caption{$S^{(1)}$}
            \end{subfigure}%
    \hspace{.03\textwidth}
    \begin{subfigure}{.25\textwidth}
        \begin{tikzpicture}[scale = .7]
            \draw[<->, thick] (4,0)--(0,0)--(0,4);
            \draw[thick] (1,.2)--(1,-.2) node[below] {$1$};
            \draw[thick] (2,.2)--(2,-.2) node[below] {$2$};
            \draw[thick] (3,.2)--(3,-.2) node[below] {$3$};
            \draw[thick] (.2,1)--(-.2,1) node[left] {$1$};
            \draw[thick] (.2,2)--(-.2,2) node[left] {$2$};
            \draw[thick] (.2,3)--(-.2,3) node[left] {$3$};
            \filldraw[opacity = .4] (0,0)--(0,1)--(1,0)--cycle;
            \filldraw[black] (0,1) circle (3pt) node[above right]{${p_1'}^{(2)}$};
            \filldraw[black] (0,2) circle (3pt) node[above right]{${p_2'}^{(2)}$};
            \filldraw[black] (1,0) circle (3pt) node[above right]{${p_3}^{(2)}$};
            \filldraw[black] (1,1) circle (3pt) node[above right]{${p_4}^{(2)}$};
        \end{tikzpicture}
                \caption{$\ell_1(S^{(2)})$}
            \end{subfigure}
    
    \begin{subfigure}{.35\textwidth}
        \begin{tikzpicture}[scale = .7]
            \draw[<->, thick] (5,0)--(0,0)--(0,5);
            \draw[thick] (1,.2)--(1,-.2) node[below] {$1$};
            \draw[thick] (2,.2)--(2,-.2) node[below] {$2$};
            \draw[thick] (3,.2)--(3,-.2) node[below] {$3$};
            \draw[thick] (4,.2)--(4,-.2) node[below] {$4$};
            \draw[thick] (.2,1)--(-.2,1) node[left] {$1$};
            \draw[thick] (.2,2)--(-.2,2) node[left] {$2$};
            \draw[thick] (.2,3)--(-.2,3) node[left] {$3$};
            \draw[thick] (.2,4)--(-.2,4) node[left] {$4$};
            \filldraw[opacity = .4] (0,0)--(0,2)--(2,0)--cycle;
            \filldraw[black] (0,2) circle (3pt) node[above right]{$p_1''$};
            \filldraw[black] (0,4) circle (3pt) node[above right]{$p_2''$};
            \filldraw[black] (1,3) circle (3pt) node[above right]{$p_3'$};
            \filldraw[black] (2,1) circle (3pt) node[above right]{$p_4$};
        \end{tikzpicture}
        \caption{$\ell_2(S) = \ell_1(S^{(1)})+\ell_1(S^{(2)})$}
            \end{subfigure}%
    \hspace{.03\textwidth}
    \begin{subfigure}{.25\textwidth}
        \begin{tikzpicture}[scale = .7]
            \draw[<->, thick] (4,0)--(0,0)--(0,4);
            \draw[thick] (1,.2)--(1,-.2) node[below] {$1$};
            \draw[thick] (2,.2)--(2,-.2) node[below] {$2$};
            \draw[thick] (3,.2)--(3,-.2) node[below] {$3$};
            \draw[thick] (.2,1)--(-.2,1) node[left] {$1$};
            \draw[thick] (.2,2)--(-.2,2) node[left] {$2$};
            \draw[thick] (.2,3)--(-.2,3) node[left] {$3$};
            \filldraw[opacity = .4] (0,0)--(0,1)--(1,0)--cycle;
            \filldraw[black] (0,1) circle (3pt) node[above right]{${p_1'}^{(1)}$};
            \filldraw[black] (0,2) circle (3pt) node[above right]{${p_2'}^{(1)}$};
            \filldraw[black] (0,3) circle (3pt) node[above right]{${p_3'}^{(1)}$};
            \filldraw[black] (1,0) circle (3pt) node[above right]{${p_4}^{(1)}$};
        \end{tikzpicture}
        \caption{$\ell_1(S^{(1)})$}
            \end{subfigure}%
    \hspace{.03\textwidth}
    \begin{subfigure}{.25\textwidth}
        \begin{tikzpicture}[scale = .7]
            \draw[<->, thick] (4,0)--(0,0)--(0,4);
            \draw[thick] (1,.2)--(1,-.2) node[below] {$1$};
            \draw[thick] (2,.2)--(2,-.2) node[below] {$2$};
            \draw[thick] (3,.2)--(3,-.2) node[below] {$3$};
            \draw[thick] (.2,1)--(-.2,1) node[left] {$1$};
            \draw[thick] (.2,2)--(-.2,2) node[left] {$2$};
            \draw[thick] (.2,3)--(-.2,3) node[left] {$3$};
            \filldraw[opacity = .4] (0,0)--(0,1)--(1,0)--cycle;
            \filldraw[black] (0,1) circle (3pt) node[above right]{${p_1'}^{(2)}$};
            \filldraw[black] (0,2) circle (3pt) node[above right]{${p_2'}^{(2)}$};
            \filldraw[black] (1,0) circle (3pt) node[above right]{${p_3}^{(2)}$};
            \filldraw[black] (1,1) circle (3pt) node[above right]{${p_4}^{(2)}$};
        \end{tikzpicture}
                \caption{$\ell_1(S^{(2)})$}
            \end{subfigure}
        \caption{The decompositions of $\ell_1(S)$ and $\ell_2(S)$ obtained by modifying the decomposition $S = S^{(1)}+S^{(2)}$ from Example \ref{ex:c2pointdecomp}.}
    \end{figure}
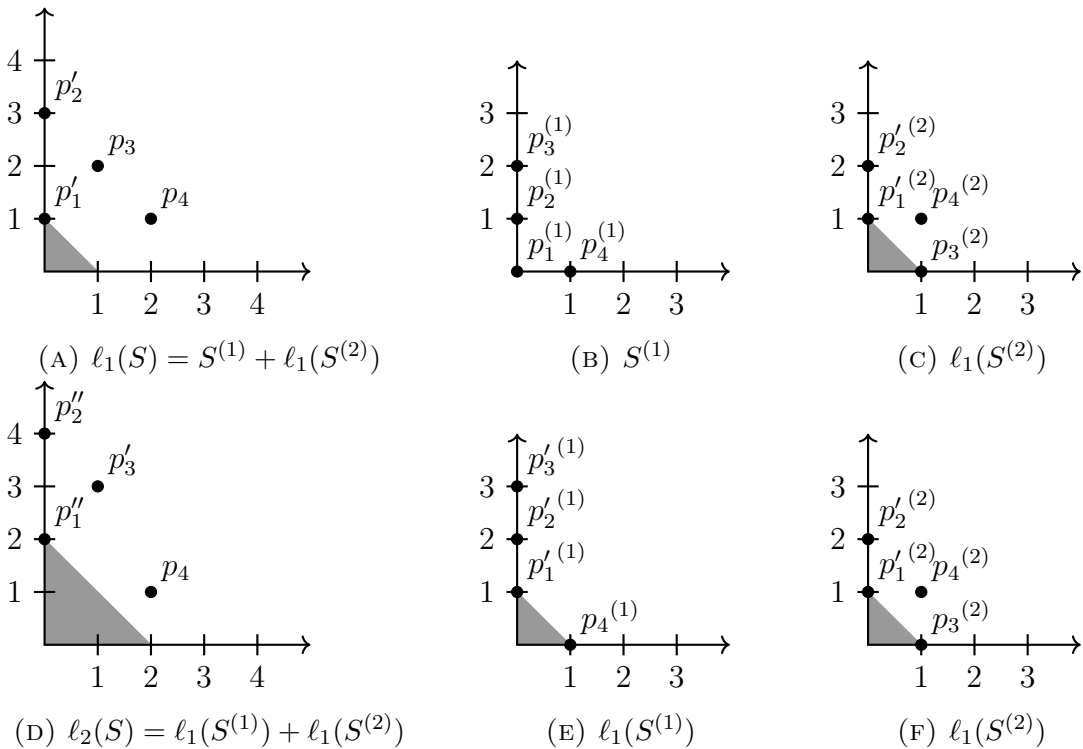
\end{example}

\begin{proof}[Proof of Theorem \ref{thm:blowuptrailingterms}]
    First we show that for $f\in A^m$ a polynomial with trailing term $x_1^{a_1}y_1^{b_1}\cdots x_n^{a_n}y_n^{b_n}$ we must have $(a_1,\dots,a_n,b_1,\dots,b_n)\in \mathcal{P}(m,k)$. We argue directly that $a_1\leq \cdots\leq a_n$ is satisfied. Indeed, since $f$ is either symmetric or anti-symmetric, terms corresponding to the permutations of these pairs of exponents also appear in $f$. For $x_1^{a_1}y_1^{b_1}\cdots x_n^{a_n}y_n^{b_n}$ to be the trailing term in this lexicographic term order, it must in particular be the last term among these permutations, which implies $0\leq a_1\leq a_2 \leq \cdots \leq a_n$. 
    
    To see that conditions (2) and (3) in the definition of $\mathcal{P}(m,k)$ are necessary, we apply some intermediate results of \cite{C}. Specifically, since $f\in A^m$ we may apply Proposition 3.5 of \cite{C}, noting that in the notation of this proposition the hypothesis $f\in \overline{A}^m$ is satisfied since by Corollary 3.10 of \cite{C} we have $A^m = \overline{A}^m$. This proposition directly states that our condition (2) is satisfied, and also that for any $j=1,\dots,n$, the Newton polytope of $f$ contains some integer point corresponding to exponents $a_j$ and $b_j' := b_j-\sum_{i=1}^{j-1} \max\{ m-(a_j-a_i),0\}$ on $x_j$ and $y_j$ respectively. But we are assuming that $f\in A^m_{\geq k}$, so these integer points $(a_j,b_j')$ must satisfy the inequality restriction on the support of $f$. In other words, we have $b_j' \geq \max\{k-a_j',0\}$, which is equivalent to the inequality asserted in condition (3). This shows that conditions (1), (2), and (3) are necessary for trailing term exponents of elements of $A^m_{\geq k}$.

    For the other direction, as well as the additional claims, recall that we have observed that every element of $\mathcal{P}(m,k)$ is of the form $\ell_k(S)$ for some set $S= \{ (a_1,b_1),\dots,(a_n,b_n) \}$ corresponding to a point of $\mathcal{P}(m,0)$. By Theorem \cite{C} Propositions 3.7 and 3.9 we can decompose $S= S^{(1)}+\cdots+S^{(m)}$ for some sets $S^{(1)},\dots,S^{(m)}$, and by Lemma \ref{lem:liftdecomp} there are some $k_1,\dots,k_m$ with $\sum k_i = k$ such that $\ell_k(S) = \ell_{k_1}(S^{(1)})+\cdots+\ell_{k_m}(S^{(m)})$. This means that $\ell_k(S)$ is obtained as the trailing term exponent of the product
    \[ f = \Delta_{\ell_{k_1}(S^{(1)})}\cdots \Delta_{\ell_{k_m}(S^{(m)})}. \]
    The final observation is that $\Delta_{\ell_{k_i}(S^{(i)})}\in A^1_{\geq k_i}$ for each $i=1,\dots,m$, since by the definition of the lifting operator every point $(a,b)\in \ell_{k_i}(S^{(i)})$ satisfies $a+b\geq k$. This implies that their product $f$ is in $A^m_{\geq k}$, exhibiting $\ell_k(S)$ as the trailing term exponent of a product as claimed.
\end{proof}

\begin{corollary}\label{cor:surjectivity}
For $m,k>0$, the natural multiplication map
$$
\bigoplus_{k_1+\ldots+k_m=k}\bigotimes_{i=1}^{m}H^0(\Hilb^{n,n+1}(\C^2),\CO(1-k_i,k_i))\to H^0(\Hilb^{n,n+1}(\C^2),\CO(m-k,k))
$$
is surjective. In other words, the bigraded algebra 
$$
\bigoplus_{k,m=1}^{\infty}H^0(\Hilb^{n,n+1}(\C^2),\CO(m-k,k))=\bigoplus_{k,m=1}^{\infty}H^0(\Hilb^{n,n+1}(\C^2),\pi_n^*\CO(m)\otimes \CL^k)
$$
is generated by the components of $m$-degree 1. 
\end{corollary}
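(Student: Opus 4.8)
The plan is to transport the statement, \emph{together with its algebra structure}, to the polynomial side via Theorem~\ref{thm: sections nested} and the ring homomorphism $\phi$, and then to read off surjectivity from the trailing-term analysis already carried out in the proof of Theorem~\ref{thm:blowuptrailingterms}. First I would record that the identifications $H^0(\Hilb^{n,n+1}(\C^2),\CO(m-k,k))\cong A^{m}[x,y]\cap I^k$ are compatible with multiplication of sections: under $\CO(m_1,k_1)\otimes\CO(m_2,k_2)=\CO(m_1+m_2,k_1+k_2)$ one has $(A^{m_1+k_1}[x,y]\cap I^{k_1})\cdot(A^{m_2+k_2}[x,y]\cap I^{k_2})\subseteq A^{m_1+m_2+k_1+k_2}[x,y]\cap I^{k_1+k_2}$, the products of determinants landing in the larger $A^{\bullet}$ and $I^{k_1}I^{k_2}\subseteq I^{k_1+k_2}$. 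Since $\phi$ is a ring homomorphism (Lemma~\ref{lem:changeofvariables}), it intertwines this with ordinary multiplication of the shifted spaces, identifying the map in the statement with
$$\bigoplus_{k_1+\cdots+k_m=k}A^1_{\geq k_1}[x,y]\otimes\cdots\otimes A^1_{\geq k_m}[x,y]\longrightarrow A^m_{\geq k}[x,y],\qquad g_1\otimes\cdots\otimes g_m\mapsto g_1\cdots g_m.$$
It therefore suffices to prove this last map is surjective.

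Next I would dispose of the auxiliary variables $x,y$. Since $A^m_{\geq k}[x,y]=A^m_{\geq k}\otimes\C[x,y]$, every element is a sum of terms $h\,x^ay^b$ with $h\in A^m_{\geq k}$, and if $h=\Delta_{T^{(1)}}\cdots\Delta_{T^{(m)}}$ with $\Delta_{T^{(i)}}\in A^1_{\geq k_i}$, then $h\,x^ay^b=(\Delta_{T^{(1)}}x^ay^b)\,\Delta_{T^{(2)}}\cdots\Delta_{T^{(m)}}$ with the first factor still in $A^1_{\geq k_1}[x,y]$. Hence it is enough to show that the products $\Delta_{T^{(1)}}\cdots\Delta_{T^{(m)}}$, with $\Delta_{T^{(i)}}\in A^1_{\geq k_i}$ and $\sum_i k_i=k$, span $A^m_{\geq k}$.

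This last point is the crux, although the hard work is already done. In the proof of Theorem~\ref{thm:blowuptrailingterms} it is shown that every $P\in\mathcal{P}(m,k)$ has the form $\ell_k(S)$ with $S=S^{(1)}+\cdots+S^{(m)}\in\mathcal{P}(m,0)$, and that for suitable $k_1,\dots,k_m\geq 0$ with $\sum_i k_i=k$ (Lemma~\ref{lem:liftdecomp}) the product $\Delta_{\ell_{k_1}(S^{(1)})}\cdots\Delta_{\ell_{k_m}(S^{(m)})}$ lies in $A^m_{\geq k}$, has each factor in $A^1_{\geq k_i}$, and has trailing-term exponent exactly $P$. Thus the span $W$ of all admissible products is a bigraded subspace of $A^m_{\geq k}$ whose set of trailing-term exponents contains $\mathcal{P}(m,k)$; since Theorem~\ref{thm:blowuptrailingterms} identifies the trailing-term exponents of $A^m_{\geq k}$ with precisely $\mathcal{P}(m,k)$, the two spaces realize the same trailing monomials. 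Working in each finite-dimensional bigraded piece and using the standard fact that the dimension of such a piece equals the number of distinct trailing monomials it realizes, we get $\dim W_d=\dim(A^m_{\geq k})_d$ for every $(q,t)$-degree $d$, and hence $W=A^m_{\geq k}$.

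The genuinely new combinatorial input — the lift decomposition (Lemma~\ref{lem:liftdecomp}) and the explicit realization of every trailing term by a product in $A^m_{\geq k}$ — is already in hand, so the main thing to get right is the bookkeeping: checking that the three identifications (Theorem~\ref{thm: sections nested}, the $\phi$-change of variables, and the reindexing $\CO(m,k)\leftrightarrow\CO(m-k,k)$) all respect the ring structure and the bigrading, so that the abstract multiplication map of sections becomes literal polynomial multiplication. The only other subtlety is the passage from ``realizes every trailing monomial'' to ``equals the whole space,'' which must be run degree by degree so that each graded piece stays finite-dimensional and the trailing-monomial count controls its dimension.
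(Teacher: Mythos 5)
Your proposal is correct and follows essentially the same route the paper intends: the paper states this corollary without a separate proof precisely because, as you observe, the proof of Theorem~\ref{thm:blowuptrailingterms} already realizes every trailing-term exponent in $\mathcal{P}(m,k)$ as the trailing term of a product $\Delta_{\ell_{k_1}(S^{(1)})}\cdots\Delta_{\ell_{k_m}(S^{(m)})}$ with each factor in $A^1_{\geq k_i}$ and $\sum_i k_i=k$, so the span of such products exhausts $A^m_{\geq k}$ by the standard trailing-term/triangularity argument in each finite-dimensional bigraded piece. Your additional bookkeeping (multiplicativity of the identifications via $\phi$ and the reduction eliminating the auxiliary variables $x,y$) is exactly the detail the paper leaves implicit, and it is handled correctly.
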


Using Theorem \ref{thm:blowuptrailingterms}, we can determine the trailing terms of $A^{m+k}[x,y]\cap I^k$ with respect to the extended lexicographic term order with $x<y<x_1<\cdots<x_n<y_1<\cdots<y_n$. Indeed, combining \eqref{eq: shifted f} with Theorem \ref{thm:blowuptrailingterms} we obtain the following:

\begin{corollary}\label{cor:trailingterms}
        For any $n\geq 1$ and $m,k\geq 0$, a monomial $x^a y^b x_1^{a_1}y_1^{b_1}\cdots x_n^{a_n}y_n^{b_n}$ is the trailing term of some polynomial $g\in A^{m+k}[x,y]\cap I^k \subseteq \C[x,y,x_1,y_1,\dots,x_n,y_n]$ if and only if $a,b\geq 0$ and $(a_1,\dots,a_n,b_1,\dots,b_n)\in \mathcal{P}(m+k,k)$.
\end{corollary}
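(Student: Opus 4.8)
The plan is to transport the trailing-term description of $A^{m+k}_{\ge k}$ from Theorem \ref{thm:blowuptrailingterms} across the isomorphism $\phi$ of Lemma \ref{lem:changeofvariables}, after first checking that $\phi$ does not disturb trailing terms. By Lemma \ref{lem:changeofvariables}, $\phi$ restricts to a bijection $A^{m+k}_{\ge k}[x,y] \to A^{m+k}[x,y]\cap I^k$, so the set of trailing terms of $A^{m+k}[x,y]\cap I^k$ equals the set of trailing terms of $\{\phi(f) : f \in A^{m+k}_{\ge k}[x,y]\}$. Thus it suffices to (i) compute the trailing term $\mathrm{tt}(\phi(f))$ in terms of $\mathrm{tt}(f)$, and (ii) characterize the trailing terms of $A^{m+k}_{\ge k}[x,y]$.

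For (i) I would prove the clean statement that $\phi$ preserves trailing terms, i.e.\ $\mathrm{tt}(\phi(f)) = \mathrm{tt}(f)$ for every $f$. The key point is that in the extended order $x<y<x_1<\cdots<y_n$ the variables $x,y$ are the smallest. For a single monomial $\mu = x^a y^b x_1^{c_1}y_1^{d_1}\cdots x_n^{c_n}y_n^{d_n}$, the image $\phi(\mu) = x^a y^b (x_1-x)^{c_1}(y_1-y)^{d_1}\cdots$ expands as $\mu$ plus the monomials obtained by extracting at least one factor $-x$ or $-y$; each such monomial has a strictly larger power of $x$, or the same power of $x$ and a strictly larger power of $y$, than $\mu$, and since $x,y$ are the least variables this makes it strictly lex-greater than $\mu$. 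Hence $\phi(\mu) = \mu + (\text{terms}\succ \mu)$, so $\phi$ is unitriangular with respect to the monomial order. Writing $f = \sum_\mu c_\mu \mu$ and letting $\mu_0=\mathrm{tt}(f)$ be its least monomial, the coefficient of $\mu_0$ in $\phi(f)$ is $c_{\mu_0}\neq 0$ while every other monomial of $\phi(f)$ exceeds $\mu_0$; therefore $\mathrm{tt}(\phi(f))=\mu_0=\mathrm{tt}(f)$. This is precisely the content of combining \eqref{eq: shifted f} with the order, since \eqref{eq: shifted f} records that $\phi$ adds only strictly higher powers of $x,y$.

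For (ii), again because $x,y$ are the smallest variables, I would argue that the trailing term of any $f=\sum_{a,b} x^a y^b f_{a,b}\in A^{m+k}_{\ge k}[x,y]$, with $f_{a,b}\in A^{m+k}_{\ge k}$, is $x^{a^*}y^{b^*}\,\mathrm{tt}(f_{a^*,b^*})$, where $(a^*,b^*)$ is the lexicographically least pair with $f_{a^*,b^*}\neq 0$: minimizing the $x$- then the $y$-exponent isolates the block $x^{a^*}y^{b^*}f_{a^*,b^*}$, whose least monomial in the remaining variables is $\mathrm{tt}(f_{a^*,b^*})$. Conversely, multiplying any element of $A^{m+k}_{\ge k}$ by $x^a y^b$ stays in $A^{m+k}_{\ge k}[x,y]$ and multiplies the trailing term by $x^a y^b$. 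Thus the trailing terms of $A^{m+k}_{\ge k}[x,y]$ are exactly $x^a y^b\cdot\nu$ with $a,b\ge 0$ and $\nu$ a trailing term of $A^{m+k}_{\ge k}$; by Theorem \ref{thm:blowuptrailingterms} the latter are the monomials $x_1^{a_1}y_1^{b_1}\cdots x_n^{a_n}y_n^{b_n}$ with $(a_1,\dots,a_n,b_1,\dots,b_n)\in\mathcal{P}(m+k,k)$. Combining this with (i) yields the asserted characterization.

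I expect the only genuine subtlety to be step (i): one must verify that the correction terms introduced by $\phi$ are truly lex-larger (not merely of different total degree), which is exactly where placing $x,y$ at the bottom of the order is essential. Once unitriangularity is established, the remaining steps are bookkeeping that reduces the statement to the already-proved Theorem \ref{thm:blowuptrailingterms}.
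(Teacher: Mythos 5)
Your proposal is correct and follows essentially the same route as the paper: the paper's (one-line) proof is precisely ``combine \eqref{eq: shifted f} with Theorem \ref{thm:blowuptrailingterms}'', i.e.\ use the unitriangularity of $\phi$ with respect to the order $x<y<x_1<\cdots<y_n$ (so $\phi$ preserves trailing terms) together with Lemma \ref{lem:changeofvariables} and the characterization of trailing terms of $A^{m+k}_{\geq k}$. Your write-up simply makes explicit the two bookkeeping steps (trailing-term preservation under $\phi$, and the description of trailing terms of $A^{m+k}_{\geq k}[x,y]$ as $x^ay^b$ times trailing terms of $A^{m+k}_{\geq k}$) that the paper leaves to the reader.
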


\subsection{Character Formulas}\label{sec: character formulas}

In this section, we give combinatorial formulas for the characters of spaces of global sections of line bundles on the nested Hilbert scheme. These can be computed by localization formulas from Corollary \ref{cor: localization}, alternatively, one can use Corollary \ref{cor:trailingterms} to obtain the following result. 

\begin{corollary}
\label{cor: Hilbert series}
The Hilbert series of $H^0\left(\Hilb^{n,n+1}(\C^2),\CO(m,k)\right)$ is given by 
$$
H_{m,k}(q,t)=\frac{1}{(1-q)(1-t)}\sum_{\mathcal{P}(m+k,k)}q^{a_1+\ldots+a_n}t^{b_1+\ldots+b_n}.
$$
\end{corollary}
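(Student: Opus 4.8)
The plan is to compute the Hilbert series by passing to the explicit basis whose trailing terms are characterized in Corollary \ref{cor:trailingterms}. The key point is that for a graded vector space, the Hilbert series is determined by the multiset of degrees of any homogeneous basis, and a basis in bijection with trailing-term monomials automatically records those degrees. So first I would invoke Theorem \ref{thm: sections nested} to identify $H^0\left(\Hilb^{n,n+1}(\C^2),\CO(m,k)\right)$ with $A^{m+k}[x,y]\cap I^k$ as a bigraded vector space, noting that the identification is grading-preserving since $\phi$, the maps in Lemma \ref{lem: factorization}, and the isomorphism $\beta_n$ all respect the $\Z^2$-grading given by $\deg x_i=\deg x=q$, $\deg y_i=\deg y=t$.

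Next I would argue that the trailing-term map gives a basis with the correct degree statistics. Since the chosen lexicographic term order refines the grading (each monomial's trailing exponent has the same total $(q,t)$-degree as the polynomial, because $\phi$ is degree-preserving and all our spaces are homogeneous), the trailing terms of a homogeneous basis are distinct monomials of matching degree. The standard linear-algebra fact is that if a graded space has a basis whose trailing terms are exactly some set $M$ of monomials (with multiplicity one), then its Hilbert series equals $\sum_{x^\alpha\in M}\deg(x^\alpha)$. Corollary \ref{cor:trailingterms} identifies $M$ precisely: the monomials $x^ay^bx_1^{a_1}y_1^{b_1}\cdots x_n^{a_n}y_n^{b_n}$ with $a,b\ge 0$ and $(a_1,\dots,a_n,b_1,\dots,b_n)\in\mathcal{P}(m+k,k)$. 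Summing the degree $q^{a+a_1+\cdots+a_n}t^{b+b_1+\cdots+b_n}$ over this set factors as the product of the free $(a,b)$ sum, which contributes the geometric factor $\frac{1}{(1-q)(1-t)}$, and the constrained sum over $\mathcal{P}(m+k,k)$, giving exactly the stated formula.

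The one genuinely careful step — and the main obstacle — is justifying that distinct basis elements have distinct trailing terms and that these trailing terms span the full trailing-term set, so that no cancellation or overcounting occurs in the degree bookkeeping. This is the content of having an honest basis indexed by $\mathcal{P}(m+k,k)$, not merely a characterization of which monomials arise as \emph{some} trailing term. I would address this by the usual Gaussian-elimination argument: given the set of achievable trailing terms from Corollary \ref{cor:trailingterms}, one selects for each such monomial a section realizing it, and these sections are linearly independent because their trailing terms are distinct; conversely any section's trailing term lies in the set, so a triangularity argument shows these sections span. Thus the count of basis elements of each bidegree equals the number of points of $\mathcal{P}(m+k,k)$ of that bidegree (times the free $(a,b)$ contribution), which is exactly what the Hilbert series records.

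Assembling these, the Hilbert series is
\[
H_{m,k}(q,t)=\Biggl(\sum_{a,b\ge 0}q^at^b\Biggr)\sum_{(a_1,\dots,b_n)\in\mathcal{P}(m+k,k)}q^{a_1+\cdots+a_n}t^{b_1+\cdots+b_n}=\frac{1}{(1-q)(1-t)}\sum_{\mathcal{P}(m+k,k)}q^{a_1+\cdots+a_n}t^{b_1+\cdots+b_n},
\]
as claimed.
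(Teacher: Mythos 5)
Your proof is correct and takes essentially the same route as the paper, which obtains this corollary directly from Corollary \ref{cor:trailingterms}: the bigraded dimension count is read off from the set of achievable trailing-term monomials via the standard triangularity (Gaussian elimination) argument, with the free exponents $(a,b)$ contributing the factor $\frac{1}{(1-q)(1-t)}$. The paper leaves this bookkeeping implicit; your write-up simply makes it explicit.
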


As written, Corollary \ref{cor: Hilbert series} involves a weighted sum over all integer points in the unbounded set $\mathcal{P}(m+k,k)$. However, for any given $m$ and $k$ one can group the points of $\mathcal{P}(m+k,k)$ into a finite number of subsets, for each of which the weighted sum is a rational function. This therefore gives an expression for the Hilbert series $H_{m,k}(q,t)$ as a rational function.

\begin{example}
\label{ex:k=m=1}
    We compute the Hilbert series $H_{1,1}(q,t)$ in the case $n=2$. Using Corollary \ref{cor: Hilbert series}, we will take the weighted sum over $\mathcal{P}(2,1)$, the set of integer points $(a_1,a_2,b_1,b_2)$ satisfying
    \begin{enumerate}
        \item $0\leq a_1\leq a_2$,
        \item if $a_1 = a_2$ then $b_2\geq b_1+2$, and 
        \item $b_1\geq \max\{1-a_1,0\}$ and $b_2\geq \max\{ 1-a_2,0 \}+\max\{2-(a_2-a_1),0\}$.
    \end{enumerate}
    We partition $\mathcal{P}(2,1)$ into two cases, each of which has three subcases.
    \begin{enumerate}
        \item $a_1=0$:
        \begin{enumerate}
            \item $a_2=a_1=0$: Here we have $b_1\geq 1$ and $b_2\geq b_1+2$. These conditions give a two-dimensional cone with vertex $(0,0,1,3)$ and ray generators $(0,0,1,1)$ and $(0,0,0,1)$, so the weighted sum of these points is $ \frac{t^4}{(1-t)(1-t^2)}$.
            \item $a_2=a_1+1=1$: Here we have $b_1\geq 1$ and $b_2\geq 1$. These conditions give a two-dimensional cone with vertex $(0,1,1,1)$ and ray generators $(0,0,1,0)$ and $(0,0,0,1)$, so the weighted sum of these points is $\frac{qt^2}{(1-t)^2}$.
            \item $a_2\geq a_1 +2=2$: Here we have $b_1\geq 1$ and $b_2\geq 0$. These conditions give a three-dimensional cone with vertex $(0,2,1,0)$ and ray generators $(0,1,0,0),$ $(0,0,1,0)$, and $(0,0,0,1)$, so the weighted sum of these points is $\frac{q^2t}{(1-q)(1-t)^2}$.
        \end{enumerate}
        \item $a_1\geq 1$:
        \begin{enumerate}
            \item $a_2= a_1$: Here we have $b_1\geq 0$ and $b_2\geq b_1+2$. These conditions give a three-dimensional cone with vertex $(1,1,0,2)$ and ray generators $(1,1,0,0),$ $(0,0,1,1)$, and $(0,0,0,1)$, so the weighted sum of these points is $\frac{q^2t^2}{(1-q^2)(1-t)(1-t^2)}$.
            \item $a_2=a_1+1$: Here we have $b_1\geq 0$ and $b_2\geq 1$. These conditions give a three-dimensional cone with vertex $(1,2,0,1)$ and ray generators $(1,1,0,0)$, $(0,0,1,0)$, and $(0,0,0,1)$, so the weighted sum of these points is $\frac{q^3t}{(1-q^2)(1-t)^2}$.
            \item $a_2\geq a_1+2$: Here we have $b_1\geq 0$ and $b_2 \geq 0$. These conditions give a four-dimensional cone with vertex $(1,3,0,0)$ and ray generators $(1,1,0,0)$, $(0,1,0,0)$, $(0,0,1,0)$, and $(0,0,0,1)$, so the weighted sum of these points is $\frac{q^4}{(1-q)(1-q^2)(1-t)^2}$.
        \end{enumerate}
    \end{enumerate}
    The weighted sum over all points in $\mathcal{P}(2,1)$ is the sum of the six rational functions above:
    \[ \frac{q^4t^3 + q^3t^4 - q^4t^2 - q^3t^3 - q^2t^4 - q^4t - q^3t^2 - q^2t^3 - qt^4 + q^4 + q^3t + q^2t^2 + qt^3 + t^4 + q^2t + qt^2}{(1-q)(1-q^2)(1-t)(1-t^2)}.\]
    By Corollary \ref{cor: Hilbert series}, the Hilbert series $H_{1,1}(q,t)$ is $\frac{1}{(1-t)(1-q)}$ times the above rational function.  One can check that by substituting $k=m=1$ into the localization formula in Example \ref{ex: n=2 localization}, one recovers the same answer.
    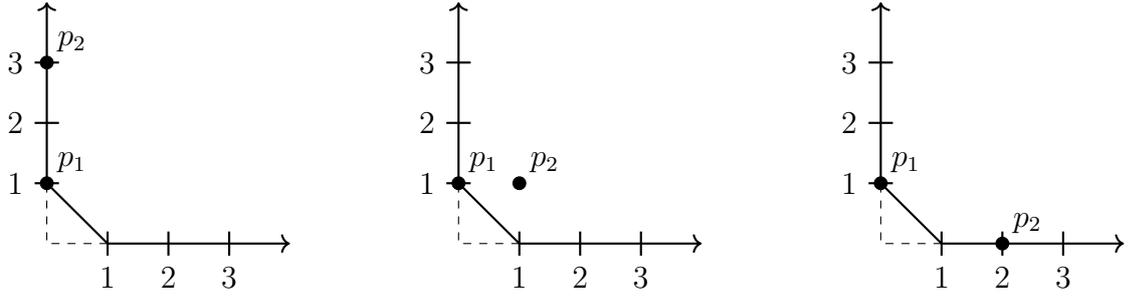
\begin{figure}[h]
    \begin{subfigure}{.3\textwidth}
    \centering
    \begin{tikzpicture}[scale = .8]
        \draw[<->, thick] (4,0)--(1,0)--(0,1)--(0,4);
        \draw[dashed] (1,0)--(0,0)--(0,1);
        \draw[thick] (1,.2)--(1,-.2) node[below] {$1$};
        \draw[thick] (2,.2)--(2,-.2) node[below] {$2$};
        \draw[thick] (3,.2)--(3,-.2) node[below] {$3$};
        \draw[thick] (.2,1)--(-.2,1) node[left] {$1$};
        \draw[thick] (.2,2)--(-.2,2) node[left] {$2$};
        \draw[thick] (.2,3)--(-.2,3) node[left] {$3$};
        \filldraw[black] (0,1) circle (3pt) node[above right]{$p_1$};
        \filldraw[black] (0,3) circle (3pt) node[above right]{$p_2$};
    \end{tikzpicture}
    \caption{$a_1=a_2=0$, $b_1\geq 1$, and $b_2\geq b_1+2$.}
    \end{subfigure}%
    \hspace{.03\textwidth}
    \begin{subfigure}{.3\textwidth}
    \centering
    \begin{tikzpicture}[scale = .8]
        \draw[<->, thick] (4,0)--(1,0)--(0,1)--(0,4);
        \draw[dashed] (1,0)--(0,0)--(0,1);
        \draw[thick] (1,.2)--(1,-.2) node[below] {$1$};
        \draw[thick] (2,.2)--(2,-.2) node[below] {$2$};
        \draw[thick] (3,.2)--(3,-.2) node[below] {$3$};
        \draw[thick] (.2,1)--(-.2,1) node[left] {$1$};
        \draw[thick] (.2,2)--(-.2,2) node[left] {$2$};
        \draw[thick] (.2,3)--(-.2,3) node[left] {$3$};
        \filldraw[black] (0,1) circle (3pt) node[above right]{$p_1$};
        \filldraw[black] (1,1) circle (3pt) node[above right]{$p_2$};
    \end{tikzpicture}
    \caption{$a_1=0$, $a_2=1$, $b_1\geq 1$, and $b_2 \geq 1$.}
    \end{subfigure}
    \hspace{.03\textwidth}
    \begin{subfigure}{.3\textwidth}
    \centering
    \begin{tikzpicture}[scale = .8]
        \draw[<->, thick] (4,0)--(1,0)--(0,1)--(0,4);
        \draw[dashed] (1,0)--(0,0)--(0,1);
        \draw[thick] (1,.2)--(1,-.2) node[below] {$1$};
        \draw[thick] (2,.2)--(2,-.2) node[below] {$2$};
        \draw[thick] (3,.2)--(3,-.2) node[below] {$3$};
        \draw[thick] (.2,1)--(-.2,1) node[left] {$1$};
        \draw[thick] (.2,2)--(-.2,2) node[left] {$2$};
        \draw[thick] (.2,3)--(-.2,3) node[left] {$3$};
        \filldraw[black] (0,1) circle (3pt) node[above right]{$p_1$};
        \filldraw[black] (2,0) circle (3pt) node[above right]{$p_2$};
    \end{tikzpicture}
    \caption{$a_1=0$, $a_2\geq 2$, $b_1\geq 1$, and $b_2\geq 0$}
    \end{subfigure}
    \caption{The vertices of the cones appearing appearing as subsets of $\mathcal{P}(2,1)$ in the case $a_1=0$, where the four dimensional vector $(a_1,a_2,b_1,b_2)$ is represented as a pair of points $p_1 = (a_1,b_1)$ and $p_2 =(a_2,b_2)$.}
    \label{fig:a1is0}
    \end{figure}

    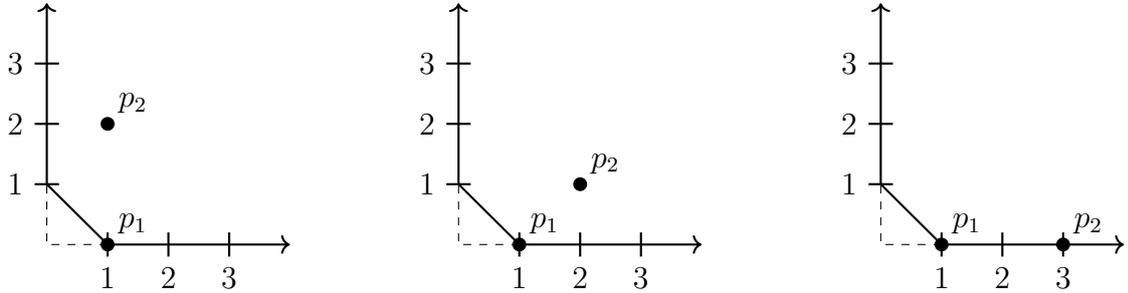
\begin{figure}[h]
    \begin{subfigure}{.3\textwidth}
    \centering
    \begin{tikzpicture}[scale = .8]
        \draw[<->, thick] (4,0)--(1,0)--(0,1)--(0,4);
        \draw[dashed] (1,0)--(0,0)--(0,1);
        \draw[thick] (1,.2)--(1,-.2) node[below] {$1$};
        \draw[thick] (2,.2)--(2,-.2) node[below] {$2$};
        \draw[thick] (3,.2)--(3,-.2) node[below] {$3$};
        \draw[thick] (.2,1)--(-.2,1) node[left] {$1$};
        \draw[thick] (.2,2)--(-.2,2) node[left] {$2$};
        \draw[thick] (.2,3)--(-.2,3) node[left] {$3$};
        \filldraw[black] (1,0) circle (3pt) node[above right]{$p_1$};
        \filldraw[black] (1,2) circle (3pt) node[above right]{$p_2$};
    \end{tikzpicture}
    \caption{$a_1\geq 1$, $a_2=a_1$,  $b_1\geq 0$, and $b_2\geq b_1+2$.}
    \end{subfigure}%
    \hspace{.03\textwidth}
    \begin{subfigure}{.3\textwidth}
    \centering
    \begin{tikzpicture}[scale = .8]
        \draw[<->, thick] (4,0)--(1,0)--(0,1)--(0,4);
        \draw[dashed] (1,0)--(0,0)--(0,1);
        \draw[thick] (1,.2)--(1,-.2) node[below] {$1$};
        \draw[thick] (2,.2)--(2,-.2) node[below] {$2$};
        \draw[thick] (3,.2)--(3,-.2) node[below] {$3$};
        \draw[thick] (.2,1)--(-.2,1) node[left] {$1$};
        \draw[thick] (.2,2)--(-.2,2) node[left] {$2$};
        \draw[thick] (.2,3)--(-.2,3) node[left] {$3$};
        \filldraw[black] (1,0) circle (3pt) node[above right]{$p_1$};
        \filldraw[black] (2,1) circle (3pt) node[above right]{$p_2$};
    \end{tikzpicture}
    \caption{$a_1\geq 1$, $a_2 = a_1+1$, $b_1\geq 0$, and $b_2\geq 1$.}
    \end{subfigure}
    \hspace{.03\textwidth}
    \begin{subfigure}{.3\textwidth}
    \centering
    \begin{tikzpicture}[scale = .8]
        \draw[<->, thick] (4,0)--(1,0)--(0,1)--(0,4);
        \draw[dashed] (1,0)--(0,0)--(0,1);
        \draw[thick] (1,.2)--(1,-.2) node[below] {$1$};
        \draw[thick] (2,.2)--(2,-.2) node[below] {$2$};
        \draw[thick] (3,.2)--(3,-.2) node[below] {$3$};
        \draw[thick] (.2,1)--(-.2,1) node[left] {$1$};
        \draw[thick] (.2,2)--(-.2,2) node[left] {$2$};
        \draw[thick] (.2,3)--(-.2,3) node[left] {$3$};
        \filldraw[black] (1,0) circle (3pt) node[above right]{$p_1$};
        \filldraw[black] (3,0) circle (3pt) node[above right]{$p_2$};
    \end{tikzpicture}
    \caption{$a_1\geq 1$, $a_2\geq a_1+2$, $b_1\geq 0$, and $b_2\geq 0$.}
    \end{subfigure}
    \caption{The vertices of the cones appearing appearing as subsets of $\mathcal{P}(2,1)$ in the case $a_1\geq 1$, where the four dimensional vector $(a_1,a_2,b_1,b_2)$ is represented as a pair of points $p_1 = (a_1,b_1)$ and $p_2 =(a_2,b_2)$.}
    \label{fig:a1greaterthan0}
    \end{figure}
\end{example}


\subsection{Newton-Okounkov Bodies}\label{sec: NO bodies}

Newton-Okounkov bodies are asymptotic invariants of divisors/line bundles usually considered only on projective varieties. They were originally constructed by Okounkov \cite{O}, and their theory was later developed by Kaveh and Khovanskii \cite{KK} and by Lazarsfeld and Musta\c{t}\u{a} \cite{LM}. In this section we show that the construction works for the non-projective variety $\Hilb^{n,n+1}(\C^2)$ and characterize the resulting sets.

We first recall the basic construction \cite{KK,LM}. For a variety $X$ with line bundle $\mathcal{L}$, we have a graded ring of sections
\[ \bigoplus_{d\geq 0}H^0(X,\mathcal{L}^{\otimes d}). \]
The Newton-Okounkov body depends on valuation-like function $\nu:H^0(X,\mathcal{L}^{\otimes d})\setminus\{0\} \to \Z^{\dim(X)}$, satisfying the following properties:
\begin{enumerate}
    \item For any nonzero constant $c$ and nonzero section $s\in H^0(X,\mathcal{L}^{\otimes d})$ we have $\nu(cs)=\nu(s).$
    \item Ordering $\Z^{\dim(X)}$ lexicographically, we have $\nu(s_1+s_2)\geq \min\{ \nu(s_1),\nu(s_2)\}$ for any two nonzero sections $s_1,s_2\in H^0(X,\mathcal{L}^{\otimes d}).$
    \item For nonzero sections $s_1 \in H^0(X,\mathcal{L}^{\otimes d_1})$ and $s_2 \in H^0(X,\mathcal{L}^{\otimes d_2})$ we have $\nu(s_1\otimes s_2) = \nu(s_1)+\nu(s_2)$.
    \item The valuation $\nu$ has one-dimensional leaves. In other words, for every $\mathbf{v}\in \Z^{\dim(x)}$ the quotient of the vector space $\{s\in H^0(X,\mathcal{L}^{\otimes d})|\nu(s)\geq \mathbf{v}\})\cup \{0\}$ by $\{s\in H^0(X,\mathcal{L}^{\otimes d})|\nu(s)> \mathbf{v}\})\cup \{0\}$ is at most one-dimensional.
\end{enumerate}
\begin{definition}
    The Newton-Okounkov body of $\mathcal{L}$ with respect to $\nu$, denoted $\Delta(\mathcal{L})$ or $\Delta_\nu(\mathcal{L})$ is the closed convex hull in $\R^{\dim(X)}$ of the set
    \[ \bigcup_{d\geq 1} \frac{1}{d}\cdot\text{Im}(\nu:H^0(X,\mathcal{L}^{\otimes d})\setminus\{0\}\to \Z^{\dim(X)}). \]
\end{definition}

We will take $X=\Hilb^{n,n+1}(\C^2)$ and $\mathcal{L}=\mathcal{O}(m,k)$. In this case we have $\mathcal{L}^{\otimes d}= \mathcal{O}(dm,dk)$, and by Theorem \ref{thm: sections nested} we know the sections $H^0(\Hilb^{n,n+1}(\C^2),\CO(dm,dk))\simeq A^{dm+dk}[x,y] \cap I^{dk}$. Let $\nu$ be the valuation that takes a nonzero polynomial $f\in A^{dm+dk}[x,y] \cap I^{dk}$ to its lexicographic trailing term exponent $(a,b,a_1,\dots,a_n,b_1,\dots,b_n)$ with variables ordered as in the previous section.  One can check that it satisfies the properties (1)-(4) above.

\begin{definition}
    Let $\Delta(m,k)\subseteq \R^{2n}$ be the set of points $(a_1,\dots,a_n,b_1,\dots,b_n)$ such that:
    \begin{enumerate}
        \item $0\leq a_1\leq a_2 \leq \cdots \leq a_n$, and
        \item for each $j=1,\dots,n,$ we have $b_j\geq \max\{k-a_j,0\} + \sum_{i=1}^{j-1}\max\{m+k-(a_j-a_i),0\}.$
    \end{enumerate}
\end{definition}

Note that $\Delta(m,k)\subseteq \R^{2n}$ is a polyhedron since condition (2) above can be broken up into separate linear inequalities among the coordinates.

\begin{lemma}\label{lem:convexHull}
    \[ \overline{\bigcup_{d\geq 1} \frac{1}{d} \cdot \mathcal{P}(d m, d k)} = \Delta(m,k). \]
\end{lemma}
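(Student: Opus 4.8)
The plan is to prove the two inclusions separately, treating $\Delta(m,k)$ as a closed rational polyhedron and exploiting the fact that the inequalities cutting out $\mathcal{P}(dm,dk)$ are positively homogeneous.

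For the inclusion $\overline{\bigcup_d \frac1d \mathcal{P}(dm,dk)} \subseteq \Delta(m,k)$, I would first note that $\Delta(m,k)$ is closed, so it suffices to check $\frac1d\mathcal{P}(dm,dk)\subseteq \Delta(m,k)$ for every $d\ge 1$. The inequalities (1) and (3) defining $\mathcal{P}(dm,dk)$ in Definition \ref{def:P(m,k)} are positively homogeneous of degree one jointly in the coordinates and the parameters $(dm,dk)$; dividing them by $d$ therefore produces exactly the inequalities (1) and (2) defining $\Delta(m,k)$ for the scaled point. The separation condition (2) of $\mathcal{P}(dm,dk)$ is simply discarded, which only makes the containment easier. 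Hence every scaled lattice point lies in $\Delta(m,k)$, and passing to closures gives this inclusion.

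The reverse inclusion is the substantive one. Since $\Delta(m,k)$ is a rational polyhedron, it is the closure of its set of rational points, so it is enough to show that every rational point of $\Delta(m,k)$ already lies in $\bigcup_d \frac1d\mathcal{P}(dm,dk)$. I would first reduce to points whose $a$-coordinates are strictly increasing. Given $p=(\tilde a,\tilde b)\in\Delta(m,k)$ with some $\tilde a_j=\tilde a_{j+1}$, I perturb the $a$-coordinates by a small strictly increasing rational amount $\epsilon_1<\cdots<\epsilon_n$ and simultaneously increase each $b_{j'}$ by a controlled rational $O(\epsilon)$ amount: raising a coordinate $a_{j'}$ can only increase the lower bound in condition (2) of $\Delta(m,k)$ for indices exceeding $j'$, and it does so by a Lipschitz (hence $O(\epsilon)$) amount which the bump in the $b$-coordinate absorbs. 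Letting $\epsilon\to 0$ shows that the rational points of $\Delta(m,k)$ with strictly increasing $a$-coordinates are dense in $\Delta(m,k)$, so it suffices to realize each such point. For such a $p$, I choose $d$ to be a common denominator of all coordinates, so that $dp=(a,b)$ is an integer vector. Then $dp$ satisfies condition (1) of $\mathcal{P}(dm,dk)$ with all $a_j$ \emph{distinct}, which makes the separation condition (2) of $\mathcal{P}(dm,dk)$ vacuous, while condition (3) for $dp$ is precisely $d$ times condition (2) of $\Delta(m,k)$ for $p$ and so holds because $p\in\Delta(m,k)$. Thus $dp\in\mathcal{P}(dm,dk)$ and $p\in\frac1d\mathcal{P}(dm,dk)$, which completes the reverse inclusion and hence the equality.

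I expect the main obstacle to be the reverse inclusion, and specifically the need to circumvent the separation condition (2) of $\mathcal{P}$, which has no analogue in $\Delta(m,k)$: the conceptual point is that this condition only constrains the lower-dimensional locus where consecutive $a$-coordinates coincide, and therefore disappears after passing to the closure. Making this precise is exactly the role of the density/perturbation step, where the care lies in adjusting the $b$-coordinates by the correct $O(\epsilon)$ amount so that the perturbed points remain inside $\Delta(m,k)$ while their $a$-coordinates become strictly increasing.
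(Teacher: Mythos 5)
Your proof is correct and takes essentially the same approach as the paper's: both directions rest on the homogeneity of the non-separation inequalities, together with the observation that points whose $a$-coordinates are strictly increasing — where the separation condition (2) of $\mathcal{P}$ is vacuous and the remaining conditions match those of $\Delta(m,k)$ after scaling — form a dense subset of $\Delta(m,k)$ realized by the sets $\frac{1}{d}\cdot\mathcal{P}(dm,dk)$. The only (inessential) difference is how that density is justified: the paper invokes full-dimensionality of the polyhedron and takes rational points in its strict interior, while you give an explicit Lipschitz perturbation that spreads the $a$-coordinates apart and absorbs the resulting change in the lower bounds on the $b$-coordinates.
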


\begin{proof}
    We first observe that $\mathcal{P}(m,k)\subseteq \Delta(m,k)$ is a subset that contains every integer point in the strict interior of $\Delta(m,k)$. Indeed, for any interior point we have $a_1<a_2<\cdots<a_n$, so condition (2) in the definition of $\mathcal{P}(m,k)$ never applies, and the remaining conditions in the definition of $\mathcal{P}(m,k)$ are exactly the defining equations of $\Delta(m,k)$. By the same argument, we have the same conclusion for $\mathcal{P}(dm,dk)\subseteq \Delta(dm,dk)$ for any $d\geq 1$. Scaling both sets by a factor of $\frac{1}{d}$, we have that $\frac{1}{d}\cdot \mathcal{P}(dm,dk)\subseteq \frac{1}{d}\cdot\Delta(dm,dk)$ is a subset containing every interior point with $\frac{1}{d}\Z$-coordinates of $\frac{1}{d}\cdot\Delta(dm,dk)$.
    
    By the homogeneity of the defining inequalities we have $d\cdot \Delta(m,k) = \Delta(dm,dk)$ for any $d\geq 1$, and hence $\frac{1}{d}\cdot\Delta(dm,dk) = \Delta(m,k)$. Thus, we have shown that 
    \[ \bigcup_{d\geq 1} \frac{1}{d} \cdot \mathcal{P}(d m, d k) \subseteq \Delta(m,k) \]
    is a subset containing every rational point in the strict interior of $\Delta(m,k)$. One easily checks that $\Delta(m,k)\subseteq \R^{2n}$ is a full-dimensional polyhedron (for example, it contains every point with $0\ll a_1\ll a_2\ll \cdots\ll a_n$ and $b_i\geq 0$), so closure of this subset is exactly $\Delta(m,k)$ as desired.
\end{proof}

\begin{corollary}
    The Newton-Okounkov body of $\CO(m,k)$ on $\Hilb^{n,n+1}\C^2$ with respect to $\nu$ is the polyhedron $\R^2_{\geq 0}\times \Delta(m+k,k)\subseteq \R^{2n+2}$.
\end{corollary}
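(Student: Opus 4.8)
The plan is to reduce the computation to Lemma~\ref{lem:convexHull} by first pinning down the image of the valuation $\nu$ on every tensor power of $\CO(m,k)$. Since $\CO(m,k)^{\otimes d}=\CO(dm,dk)$, Theorem~\ref{thm: sections nested} identifies the sections of this power with $A^{d(m+k)}[x,y]\cap I^{dk}$, and $\nu$ records the lexicographic trailing-term exponent $(a,b,a_1,\dots,a_n,b_1,\dots,b_n)\in\Z^{2n+2}$. I would then invoke Corollary~\ref{cor:trailingterms} for the section space $A^{d(m+k)}[x,y]\cap I^{dk}$: it states exactly that a monomial $x^ay^bx_1^{a_1}y_1^{b_1}\cdots x_n^{a_n}y_n^{b_n}$ is an achievable trailing term if and only if $a,b\ge 0$ and $(a_1,\dots,a_n,b_1,\dots,b_n)\in\mathcal{P}(d(m+k),dk)$. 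Thus the image of $\nu$ on the $d$-th power is the product $\Z^2_{\ge 0}\times\mathcal{P}(d(m+k),dk)$, the first factor recording the exponents of $x,y$, and the Newton--Okounkov body is the closed convex hull of $B:=\bigcup_{d\ge 1}\tfrac1d\bigl(\Z^2_{\ge 0}\times\mathcal{P}(d(m+k),dk)\bigr)$.

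Next I would prove that this closed convex hull equals $\R^2_{\ge 0}\times\Delta(m+k,k)$ by a two-sided inclusion. For $\subseteq$, every point of $B$ has the form $(\tfrac1d u,\tfrac1d p)$ with $u\in\Z^2_{\ge 0}$ and $p\in\mathcal{P}(d(m+k),dk)$; using $\mathcal{P}(d(m+k),dk)\subseteq\Delta(d(m+k),dk)$ together with the homogeneity $\Delta(d(m+k),dk)=d\cdot\Delta(m+k,k)$ of the defining inequalities (both already exploited in the proof of Lemma~\ref{lem:convexHull}), one gets $B\subseteq\R^2_{\ge 0}\times\Delta(m+k,k)$, and this set is closed and convex, so it contains the closed convex hull of $B$. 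For $\supseteq$, I would reuse the key fact established in the proof of Lemma~\ref{lem:convexHull} that $\tfrac1d\mathcal{P}(d(m+k),dk)$ captures every point in the strict interior of $\Delta(m+k,k)$ with coordinates in $\tfrac1d\Z$, and then exhibit a dense family of points of $B$ inside $\R^2_{\ge 0}\times\Delta(m+k,k)$.

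I expect the only genuine subtlety — the main obstacle — to be the passage from the product structure at each fixed level $d$ to the product structure of the limiting body, namely synchronizing denominators between the free $(a,b)$-factor and the $\mathcal{P}$-factor. Concretely, to realize a prescribed pair $(u,p)$ with $u\in\mathbb{Q}^2_{\ge 0}$ and $p$ an interior rational point of $\Delta(m+k,k)$ as a point of $B$, one must find a single $d$ for which simultaneously $du\in\Z^2_{\ge 0}$ and $dp\in\mathcal{P}(d(m+k),dk)$. I would handle this by taking $d$ to be a common denominator of all coordinates of $u$ and of $p$; then $dp$ is an integer point in the strict interior of $\Delta(d(m+k),dk)$ and hence lies in $\mathcal{P}(d(m+k),dk)$, so $(u,p)=\tfrac1d(du,dp)\in B$. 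Granting that such pairs are dense in $\R^2_{\ge 0}\times\Delta(m+k,k)$ — which follows from density of $\mathbb{Q}^2_{\ge 0}$ in $\R^2_{\ge 0}$ and full-dimensionality of $\Delta(m+k,k)$, already noted in Lemma~\ref{lem:convexHull} — the closure of $B$, and a fortiori its closed convex hull, contains $\R^2_{\ge 0}\times\Delta(m+k,k)$. Combined with the first inclusion, this yields the claimed equality.
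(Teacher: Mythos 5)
Your proposal is correct and takes essentially the same approach as the paper: it identifies the image of $\nu$ on each power $\CO(dm,dk)$ as $\Z^2_{\ge 0}\times \mathcal{P}(d(m+k),dk)$ via Corollary~\ref{cor:trailingterms}, and then identifies the closed convex hull with $\R^2_{\ge 0}\times\Delta(m+k,k)$ using the content of Lemma~\ref{lem:convexHull}. The only difference is that you spell out the two-sided inclusion and the denominator-synchronization between the $\Z^2_{\ge 0}$-factor and the $\mathcal{P}$-factor, details which the paper compresses into a single citation of Lemma~\ref{lem:convexHull}.
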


\begin{proof}
    By Corollary \ref{cor:trailingterms}, we have
    \[ \text{Im}(\nu:H^0(\Hilb^{n,n+1}(\C^2),\mathcal{O}(dm,dk))\setminus\{0\}\to \Z^{2n+2}) = \Z^2_{\ge 0}\times \mathcal{P}(dm+dk,dk). \]
    With this characterization, the Newton-Okounkov body is the closed convex hull of the set
    \[ \bigcup_{d \geq 0} \frac{1}{d}\cdot \left(\Z^2_{\ge 0}\times \mathcal{P}(dm+dk,dk)\right). \]
    By Lemma \ref{lem:convexHull}, this is exactly $\R^2_{\ge 0}\times \Delta(m+k,k)$, as desired.
\end{proof}


\begin{thebibliography}{99}

\bibitem{BrionKumar} Brion, M.,  Kumar, S. (2005). Frobenius splitting methods in
  geometry and representation theory. : Birkhäuser Boston.

\bibitem{Can} M. Can. Nested Hilbert schemes and the nested  $q,t$-Catalan series
Can, Mahir Bilen.
Discrete Mathematics \& Theoretical Computer Science (DMTCS), Nancy (2008), 61--69.

\bibitem{C} I. Cavey. Effective divisors and Newton-Okounkov bodies of Hilbert schemes of points on toric surfaces.
J. Algebra 632 (2023), 602--640.

\bibitem{Cheah} J. Cheah.
Cellular decompositions for nested Hilbert schemes of points. 
Pacific J. Math. 183 (1998), no. 1, 39--90.

\bibitem{ES} G. Ellingsrud, A. Str\o mme. An intersection number for the punctual Hilbert scheme of a surface.
Trans. Amer. Math. Soc. 350 (1998), no. 6, 2547--2552.

\bibitem{EGL}  G. Ellingsrud, L. G\"ottsche, M. Lehn. On the cobordism class of the Hilbert scheme of a surface.
J. Algebraic Geom. 10 (2001), no. 1, 81--100.

\bibitem{Fogarty} J. Fogarty, Algebraic families on an algebraic surface. Am. J. Math, Vol. 90
(1968), p. 511--521.

\bibitem{Fogarty2} J. Fogarty. Algebraic families on an algebraic surface II: the Picard scheme
of the punctual Hilbert scheme. Am. J. Math, Vol. 95 (1973), p. 660--687.

\bibitem{Fulton} W. Fulton. Introduction to Toric Varieties. (AM-131). Princeton University Press (1993).

\bibitem{Haiman} M. Haiman. Hilbert schemes, polygraphs and the Macdonald positivity conjecture.
J. Amer. Math. Soc. 14 (2001), no. 4, 941--1006.


\bibitem{Haimanqt}   M. Haiman. $t,q$-Catalan numbers and the Hilbert scheme.
Discrete Math. 193 (1998), no. 1--3, 201--224.

\bibitem{KK}
K. Kaveh, A. Khovanskii. Newton-Okounkov bodies, semigroups of integral points, graded
algebras and intersection theory. Ann. of Math. (2) 176 (2012), no. 2, 925--978.

\bibitem{LM} 
R. Lazarsfeld, M. Musta\c{t}\u{a}. Convex bodies associated to linear series. Annales scientifiques de l'École Normale Supérieure,  Serie 4,  Volume 42 (2009) no. 5, 783--835.

\bibitem{NY} H. Nakajima and K. Yoshioka. Perverse coherent sheaves on blow-up, II: Wall-crossing and Betti
numbers formula, J. Algebraic Geom. 29 (2011), 47–100.

\bibitem{O}
A. Okounkov, Why would multiplicities be log-concave?Progr. Math. 213, (2003), 329--347.

\bibitem{Ryan} T. Ryan, R. Yang. Nef cones of nested Hilbert schemes of points on surfaces.
Int. Math. Res. Not. IMRN (2020), no. 11, 3260--3294.

\bibitem{Tikh} A. Tikhomirov. The variety of complete pairs of zero-dimensional subschemes of an algebraic surface.
Izv. Math. 61 (1997), no. 6, 1265--1291




\end{thebibliography}


\end{document}